\newlength{\mylength}
\renewenvironment{proof}{\par
	\pushQED{\qed}%
	\normalfont \topsep6\p@\@plus6\p@\relax
	\noindent\emph{Proof.} 
	\ignorespaces
}{%
\popQED\endtrivlist\@endpefalse
}
\newtheoremstyle{mythm}
{\mylength}
{4pt}
{\itshape}
{0pt}
{\bfseries}
{.\ }
{ }
{\thmname{#1}\thmnumber{ #2}\thmnote{ (#3)}}
\newtheoremstyle{myrmk}
{\mylength}
{4pt}
{}
{0pt}
{\itshape}
{.\ }
{ }
{\thmname{#1}\thmnumber{ #2}\thmnote{ (#3)}}
\newtheoremstyle{mydef}
{\mylength}
{4pt}
{}
{0pt}
{\bfseries}
{.\ }
{ }
{\thmname{#1}\thmnumber{ #2}\thmnote{ (#3)}}
\theoremstyle{mythm} 
\newtheorem{theorem}[subsubsection]{Theorem}
\newtheorem{question}{Question}
\newtheorem{conjecture}{Conjecture}
\newtheorem{prop}[subsubsection]{Proposition} 
\newtheorem{lem}[subsubsection]{Lemma}
\newtheorem*{lem*}{Lemma}
\newtheorem{cor}[subsubsection]{Corollary}
\theoremstyle{mydef}
\newtheorem{defn}[subsubsection]{Definition} 
\theoremstyle{myrmk} 
\newtheorem{rmk}[subsubsection]{Remark}
\newtheorem*{rmk*}{Remark}
\newtheorem{ex}[subsubsection]{Example}
\newcommand{\nc}{\newcommand} 
\nc{\on}{\operatorname}
\nc{\rnc}{\renewcommand}
\nc{\wt}{\widetilde}
\nc{\wh}{\widehat} 
\nc{\ol}{\overline} 
\nc{\BO}{\mathbb{O}}
\nc{\BN}{\mathbb{N}}
\nc{\BZ}{\mathbb{Z}}
\nc{\BQ}{\mathbb{Q}}
\nc{\BR}{\mathbb{R}}
\nc{\BC}{\mathbb{C}}
\nc{\BA}{\mathbb{A}}
\nc{\BP}{\mathbb{P}}
\nc{\BF}{\mathbb{F}}
\nc{\BG}{\mathbb{G}}
\nc{\ul}{\underline}
\nc{\pr}{\on{pr}}
\nc{\id}{\on{id}}
\nc{\Id}{\on{Id}}
\nc{\Tr}{\on{Tr}}
\nc{\Coh}{\on{Coh}}
\nc{\Pic}{\mathrm{Pic}}
\nc{\curAut}{\mathcal{A}\kern -.5pt ut}
\nc{\la}{\langle}
\nc{\ra}{\rangle} 
\nc{\lV}{\lVert}
\nc{\rV}{\rVert}
\nc{\mb}{\mathbf}
\nc{\mf}{\mathfrak}
\nc{\mc}{\mathscr}
\rnc{\mathcal}{\mathscr}
\nc{\hra}{\hookrightarrow}
\nc{\sra}{\twoheadrightarrow} 
\nc{\coker}{\on{coker}}
\nc{\End}{\on{End}}
\nc{\Ext}{\on{Ext}}
\nc{\Spec}{\on{Spec}}
\nc{\Proj}{\on{Proj}}
\rnc{\Im}{\on{Im}}
\nc{\Hom}{\on{Hom}}
\nc{\curHom}{\mathcal{H}\kern -.5pt om}
\nc{\curExt}{\mathcal{E}\kern -.5pt xt}
\nc{\EExt}{\on{\mathbb{E}\kern -.5pt \mathrm{xt}}}
\nc{\curEnd}{\mathcal{E}\kern -.5pt nd}
\nc{\curSym}{\mathcal{S}\kern -.5pt ym}
\nc{\curSpec}{\mathcal{S}\kern -0.5pt pec}
\nc{\curProj}{\mathcal{P}\kern -0.5pt roj}
\DeclareMathOperator*\colim{colim}
\nc{\oh}{\mc{O}}
\nc{\Vect}{\mathrm{Vect}}
\nc{\op}{\mathrm{op}}
\rnc{\setminus}{\smallsetminus}
\nc{\acts}{\curvearrowright}
\nc{\lacts}{\curvearrowleft}
\nc{\act}{\on{act}}
\nc{\SL}{\on{SL}}
\nc{\GL}{\on{GL}}
\nc{\gm}{\mathbb{G}_m}
\nc{\ga}{\mathbb{G}_a}
\nc{\D}{\mc{D}}
\nc{\gr}{\on{gr}}
\nc{\triv}{\on{triv}}
\nc{\QCoh}{\on{QCoh}}
\nc{\tf}{\mathrm{tf}}
\nc{\Com}{\on{Com}}
\nc{\pt}{\mathrm{pt}}
\nc{\Rep}{\on{Rep}}
\nc{\stab}{\on{Stab}}
\nc{\Ad}{\on{Ad}}
\nc{\I}{\mc{I}}
\nc{\back}{\backslash}
\nc{\tame}{{\on{tame}}}
\nc{\Sch}{\on{Sch}}
\nc{\locsys}{\on{LocSys}}
\nc{\Char}{\on{Char}}
\nc{\Perv}{\on{Perv}}
\newenvironment{cd}{\begin{equation*}\begin{tikzcd}}{\end{tikzcd}\end{equation*}\ignorespacesafterend}
\nc{\e}[1]{\begin{align*} #1 \end{align*}}
\nc{\collapse}{\vspace{-\mylength}}
\def\blfootnote{\gdef\@thefnmark{}\@footnotetext}
\title{Equivariant vector bundles on varieties with codimension-one orbits}  
\author{Lucas Mason-Brown and James Tao}
\begin{document}
	
	\maketitle
	
		\begin{abstract}
	    Let $G$ be an algebraic group and let $X$ be a smooth $G$-variety with two orbits: an open orbit and a a closed orbit of codimension $1$. We give an algebraic description of the category of $G$-equivariant vector bundles on $X$ under a mild technical hypothesis. We deduce simpler classifications in the special cases of line bundles and vector bundles which are generically local systems. We apply our results to the study of admissible representations of semisimple Lie groups. Our main result gives a new set of constraints on the associated cycles of unipotent representations.
	\end{abstract}

	\tableofcontents

	\section{Introduction}

	\subsection{Summary} \label{ss:summary}
	
	Work over an algebraically closed field $k$ of characteristic zero. Consider a variety $X$ acted on by an algebraic group $G$. Make the following assumptions: 
	\begin{itemize}
		\item $X$ is smooth. 
		\item $X$ consists of one open $G$-orbit and one closed $G$-orbit of codimension one. 
	\end{itemize}
	Then our main result (Theorem~\ref{thm-main}) gives an algebraic description of the category of $G$-equivariant vector bundles on $X$, provided that $X$ satisfies a technical hypothesis called `fastenedness' (Definition~\ref{def:fastened}). 
	
	In the rest of the paper, we deduce simpler classifications in the special cases of line bundles (Proposition~\ref{prop:linebundles}) and vector bundles which are generically $G$-equivariant local systems (Corollary~\ref{cor-locsys}). We also use the main result to describe $G$-equivariant vector bundles on \emph{weakly normal} $G$-varieties obtained by gluing together those of the form described above (Section~\ref{sec:wn}). As an application, we study the coherent sheaves which arise as associated graded modules of $(\mf{g}, K)$-modules, when the pair $(\mf{g}, K)$ arises from a real reductive group (Section~\ref{sec:application}). 
	
	In the rest of this introduction, we discuss Theorem~\ref{thm-main} in the larger context of `gluing of categories,' and we describe some of these applications in more detail.
	
	\subsection{The idea of gluing}  \label{intro-gluing}
	
	Let $X$ be as the first paragraph of~\ref{ss:summary}, and write $X = U \cup Z$ where $U$ is the open $G$-orbit and $Z$ is the closed $G$-orbit of codimension one, equipped with the reduced subscheme structure. Upon choosing basepoints $z \in Z$ and $u \in U$, it is easy to describe the categories of $G$-equivariant vector bundles on $Z$ and $U$ separately: 
	\e{
		\Vect^G(Z) &\simeq \Rep(G^z) \\
		\Vect^G(U) &\simeq \Rep(G^u), 
	}
	where $G^z$ and $G^u$ are the stabilizer groups. It is natural to ask whether $\Vect^G(X)$ can be constructed by `gluing' the two categories displayed above. 
	
	Motivation for this idea comes from Beilinson's description~\cite{Beilinson1987} of perverse (constructible) sheaves on a scheme $X$ which is decomposed as $U \cup Z$ where $Z = V(f)$ for some regular function $f \in \oh_X$. That description says that a perverse sheaf $\mc{F}$ on $X$ can be encoded as a quadruple $(\mc{F}_U, \mc{F}_Z, u, v)$ such that
	\begin{itemize}
		\item $\mc{F}_U$ and $\mc{F}_Z$ are perverse sheaves on $U$ and $Z$, respectively. 
		\item $u$ and $v$ are maps 
		\[
			\Psi_f(\mc{F}_U) \xrightarrow{u} \mc{F}_Z \xrightarrow{v} \Psi_f(\mc{F}_U)
		\]
		where $\Psi_f(-)$ is the nearby cycles functor. (We have ignored the Tate twist.) The composition $u \circ v$ must equal $1 - (\text{monodromy endomorphism})$. 
	\end{itemize}
	This description has the good feature that it involves only the categories of interest on $U$ and $Z$, along with a functor between them (and an endomorphism of this functor). 
	
	For vector bundles, the situation is not as nice. The standard approaches to gluing quasicoherent sheaves (e.g.\ recollement of categories) require knowing about sheaves on $X_{\wh{Z}}$ (the formal completion of $X$ along $Z$). For example, Beauville--Laszlo gluing~\cite{bl} implies that a $G$-equivariant vector bundle on $X$ can be encoded as a triple $(\mc{F}_U, \mc{F}_{\wh{Z}}, \alpha)$ where 
	\begin{itemize}
		\item $\mc{F}_U$ and $\mc{F}_{\wh{Z}}$ are $G$-equivariant vector bundles on $U$ and $X_{\wh{Z}}$, respectively. 
		\item $\alpha$ is an isomorphism between their restrictions to $X_{\wh{Z}} \setminus Z$. 
	\end{itemize} 
	Since $X_{\wh{Z}}$ is not easy to compute with in practice, we sought a different description of $\Vect^G(X)$ which does not involve $X_{\wh{Z}}$. 
	
	In some special cases, knowledge about $X_{\wh{Z}}$ can be gleaned from looking at the action of $G$ on the normal bundle $\mc{N}_{Z/X}$. For instance, if $G$ is reductive and $X$ is affine, then the Luna Slice Theorem provides a one-dimensional $G^z$-invariant locally closed subvariety $\ell \subset X$ which contains $z$ and is transverse to $Z$, together with a $G^z$-equivariant \'etale map $\ell \to \mc{N}_{Z/X}|_z$. Combined with the previous paragraph, this essentially reduces the problem to studying $G$-equivariant vector bundles on $\mc{N}_{Z/X}$. However, in our desired application $X$ is usually not affine, so this reduction does not apply. 
	
	In our desired application, something weaker is true (Proposition~\ref{prop:chXfastened}): there is a line $\ell \subset X$ which intersects $Z$ transversely, is invariant under a one-parameter subgroup $\gamma : \gm \hra G^z$, and is acted upon nontrivially by $\gamma(\gm)$. The existence of such a pair $(\ell, \gamma)$ is equivalent to the `fastenedness' hypothesis of Theorem~\ref{thm-main}. 
	
	Starting from a choice of $(\ell, \gamma)$ as above, we arrive at a description of $\Vect^G(X)$ which bears little resemblance to `gluing.' Instead, it should be thought of in the following terms. If we pull back $\mc{F} \in \Vect^G(X)$ along $\ell \hra X$, two algebraic structures emerge:
	\begin{itemize}
		\item $\mc{F}|_{\ell}$ is equivariant with respect to a $\gm$-action which comes from the subgroup $\gamma$. Recall that the category of $\gm$-equivariant vector bundles on $\BA^1$ is equivalent to the category of finite-dimensional vector spaces equipped with exhausting filtrations, via the Artin--Rees construction (Proposition~\ref{ar}).
		\item Each fiber of $\mc{F}|_{\ell}$ is acted on by the stabilizer group of the corresponding point in $\ell$. 
	\end{itemize}
	The classifying data defined in Theorem~\ref{thm-main} are a combination of these two bullet points. 
	
	However, in the case of line bundles, it is possible to reformulate the main result in terms which more closely resemble `gluing via nearby cycles' see Lemma~\ref{lem-nearby}. The possibility of doing so reflects that equivariant quasicoherent sheaves are more `rigid' than arbitrary quasicoherent sheaves. 
	
	\subsection{Applications} 
	
	Suppose $G_{\mathbb{R}}$ is a real reductive group and $K_{\mathbb{R}} \subset G_{\mathbb{R}}$ is a maximal compact subgroup. Let $K$ be the complexification of $K_{\mathbb{R}}$ and let $\mathcal{N}$ be the cone of nilpotent elements in the complexified Lie algebra of $G_{\mathbb{R}}$
	
	The determination of the irreducible unitary representations of $G_{\mathbb{R}}$ is one of the major unsolved problem in representation theory. There is evidence to suggest that every such representation can be constructed through a sequence of well-understood operations from a finite set of building blocks, called the \emph{unipotent representations}. These representations are `attached' (in a certain mysterious sense) to the nilpotent orbits of $G_{\mathbb{R}}$ on the dual space of its Lie algebra.

	Attached to every irreducible admissible representation $V$ of $G_{\mathbb{R}}$ (for example, every irreducible unitary representation) is a Harish-Chandra module $M$, which has the structure of an irreducible $K$-equivariant $\mathfrak{g}$-module. Taking its associated graded gives a well-defined class $[\gr(M)]$ in the Grothendieck group of $K$-equivariant coherent sheaves on $\mathcal{N}$. This class provides valuable information about $V$, including, for example, its $K_{\mathbb{R}}$-types. Determining the classes $[\gr(M)]$ associated to unipotent representations is a fundamental problem with origins in the seminal paper (\cite{Vogan1991}) of Vogan. 
	
	In Section~\ref{sec:application}, we define the $K$-chain $\overline{\mathrm{Ch}}(M)$ associated to $M$. This is the variety with $K$-action obtained by first deleting the orbits of codimension $\ge 2$ from the reduced support of $\gr(M)$ and then passing to the projectivization (see~\ref{gk-chain} for details). The class $[\gr(M)]$ gives rise to a class $[\overline{\gr}(M)]$ on $\overline{\mathrm{Ch}}(M)$, and when $V$ is unipotent, we conjecture that $[\overline{\mathrm{gr}}(M)]$ determines $[\gr(M)]$. Our main result in Section \ref{sec:application} is that $\overline{\mathrm{Ch}}(M)$ is fastened (Proposition \ref{prop:chXfastened}). To prove this remarkable fact, we use in an essential way the special structure of $\mathcal{N}$ (e.g. the existence of `real' Slodowy slices with nice contracting $\gm$-actions).  Since $\overline{\mathrm{Ch}}(M)$ is fastened, we can  understand $\overline{\mathrm{gr}}(M)$ (and by extension $\gr(M)$ and $V$) using Theorem \ref{thm-main} (and its corollaries).

	\subsection{Acknowledgments} 
	
	The authors would like to thank Roman Bezrukavnikov and David Vogan for many helpful discussions. The second author is supported by an NSF GRFP grant.

	\section{Chains} \label{sec:chains}

    \subsection{Basic definitions} \label{s-def-chains}
    Let $k$ be an algebraically closed field of characteristic zero, and let $G$ be an affine algebraic group over $k$.
	
	\begin{defn}\label{def:chain}
		A $G$-chain is a $k$-variety\footnote{For us, a `variety' is a reduced, separated, finite type $k$-scheme. It is not necessarily irreducible.} $X$ equipped with an action by $G$ such that
		\begin{enumerate}
			\item\label{cond:chain1} $G$ acts on $X$ with finitely many orbits. 
			\item\label{cond:chain2} $X$ is pure dimensional. 
			\item\label{cond:chain3} Each $G$-orbit on $X$ has codimension $0$ or $1$. 
		\end{enumerate}
	\end{defn}
	
	These properties imply that every $G$-orbit on $X$ is either open or closed. Each open orbit $U \subset X$ has codimension 0, and each closed orbit $Z \subset X$ has codimension 1. 
	
	\begin{ex} \label{ex-X}
		Let $G = \gm$, and $X = \Spec \mathbb{C}[x,y]/(xy)$. Let $G$ act on $X$ by the formulas
		$$t \cdot x = t^2x \qquad t \cdot y = t^{-2} y  \qquad t \in \gm$$
		Then $X$ is a $G$-chain. It has two open $G$-orbits (the punctured axes), and one closed $G$-orbit ($\{0\}$). This example is closely related to the (infinite-dimensional) representation theory of $\on{SL}_2(\mathbb{R})$, see Example~\ref{ex-su}. 
	\end{ex}
	
	We say that a $G$-chain is \emph{irreducible} if it has only one open $G$-orbit. (Note that an irreducible $G$-chain need not be irreducible as a $k$-variety if $G$ is not connected.) For an arbitrary $G$-chain $X$, we define its \emph{irreducible components} to be the closures of the open orbits of $X$, equipped with the reduced subscheme structures. These are precisely the irreducible closed sub-chains of $X$ whose dimensions equal $\dim X$. We say that a $G$-chain is \emph{simple} if it is irreducible and has at most one closed $G$-orbit. If $X$ is irreducible, then the simple locally-closed sub-chains form an open cover of $X$. 
	
	If $X$ is a $G$-chain, then its normalization $\tilde{X}$ is again a $G$-chain. Any normal $G$-chain is smooth, since its singular locus is a subset of codimension $\ge 2$ and hence empty. Any smooth $G$-chain is the disjoint union of its irreducible components. In particular, the normalization of an irreducible $G$-chain is a smooth, irreducible $G$-chain. 
	
	\subsection{Fastened chains} 
	
	\begin{defn}\label{def:fastened}
		We define the property of `fastenedness' for $G$-chains in three steps: 
		\begin{itemize}
			\item If $X$ is a smooth irreducible $G$-chain, we say that a closed $G$-orbit $Z \subset X$ is \emph{fastened} to the open orbit $U \subset X$ if $G$ acts transitively on the punctured normal bundle 
			\[
				\mc{N}^{\circ}_{Z / X} := \mc{N}_{Z / X} \setminus (\text{zero section}). 
			\]
			\item If $X$ is a smooth irreducible $G$-chain, we say that $X$ is \emph{fastened} if each closed $G$-orbit $Z \subset X$ is fastened to the open orbit. 
			\item If $X$ is any $G$-chain, we say that $X$ is \emph{fastened} if each irreducible component of the normalization of $X$ is fastened. 
		\end{itemize}
		The terminology is set up so that it is clear what it means for a closed orbit $Z \subset X$ to be fastened to one open orbit which is adjacent to $Z$ but not to another. 
	\end{defn}
	
	\begin{rmk} \label{rmk-fastened-normal}
	Suppose that $X$ is a smooth irreducible $G$-chain and that $Z \subset X$ is a closed $G$-orbit. For any closed point $z \in Z$, the stabilizer subgroup $G^z \subset G$ acts linearly on the (one-dimensional) fiber $\mc{N}_{Z_i / X}|_{z}$. This action defines a character $G^z \to \gm$, which is surjective if and only if $Z$ is fastened to the open orbit of $X$. 
	\end{rmk}
	
	\begin{prop}\label{prop:affinefastened}
		Every affine $G$-chain is fastened.
	\end{prop}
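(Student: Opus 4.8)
The plan is to reduce to a local statement at a point of the closed orbit and settle it with an \'etale slice. First I would record the reductions: fastenedness is defined on the irreducible components of the normalization $\tilde X$, the normalization of an affine variety is affine, and a normal $G$-chain is smooth — hence the disjoint union of its irreducible components, which, being clopen in $\tilde X$, are affine, $G$-stable, and are themselves $G$-chains. So I may assume $X$ is smooth, irreducible and affine, and (replacing $G$ by its image in the automorphism group of $X$) that the action is faithful. Fix a closed $G$-orbit $Z \subset X$ and a point $z \in Z$. By Remark~\ref{rmk-fastened-normal} it suffices to show that the character $\chi \colon G^z \to \gm$ describing the $G^z$-action on the line $\mc{N}_{Z/X}|_z$ is surjective; since any closed subgroup of $\gm$ is either $\gm$ or finite, this amounts to ruling out the case that $\Im(\chi)$ is finite.

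The main case I would treat is $G$ reductive (which already covers the intended application, where $G = K$). Then $Z = G/G^z$ is affine, being closed in the affine $X$, so $G^z$ is reductive by Matsushima's criterion, and Luna's \'etale slice theorem applies at $z$: it yields a $G^z$-stable locally closed affine subvariety $S \subset X$ through $z$ with $T_zS$ complementary to $T_zZ$ in $T_zX$ — so $S$ is a curve and the composite $T_zS \hookrightarrow T_zX \twoheadrightarrow \mc{N}_{Z/X}|_z$ is an isomorphism — a $G^z$-equivariant \'etale map $\psi \colon S \to \mc{N}_{Z/X}|_z$ with $\psi(z) = 0$, and a $G$-equivariant \'etale map $G \times_{G^z} S \to X$ onto a $G$-stable open neighborhood of $Z$. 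The $G$-orbits on $G \times_{G^z} S$ correspond to the $G^z$-orbits on $S$, and via the last map (\'etale, onto an open subset) finiteness of the orbit set on $X$ forces $G^z$ to have only finitely many orbits on $S$. But $G^z$ acts on $\mc{N}_{Z/X}|_z \cong \BA^1$ through $\chi$, so if $\Im(\chi) = \mu_n$ were finite, the $G^z$-orbits on $\BA^1 \setminus \{0\} \cong \gm$ would be the $n$-element cosets of $\mu_n$ — infinitely many — and the nonempty open set $\psi(S) \setminus \{0\} \subset \gm$ would meet infinitely many of them; pulling back along $\psi$ would produce infinitely many $G^z$-orbits on $S$, a contradiction. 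Hence $\Im(\chi) = \gm$ and $Z$ is fastened.

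The hard part will be the non-reductive case: then $G^z$ need not be reductive and Luna's theorem is unavailable. Writing $G^z = L \ltimes R_u(G^z)$ with $L$ a Levi subgroup, one at least sees that $\chi$ is trivial on $R_u(G^z)$, so $\Im(\chi) = \Im(\chi|_L)$; but $L$ typically does not act on $X$ with finitely many orbits, so the slice argument does not transfer to $L$, and there is no reduction to reductive $G$ keeping $X$ affine (by Matsushima, $G/G^{\mathrm{red}}$ is not affine). An alternative I would try is to work with the $\mc{I}_Z$-adic filtration of $A := \oh(X)$: since $Z \hookrightarrow X$ is a regular embedding and $X$ is affine, $\gr_{\mc{I}_Z}(A) = \on{Sym}_{\oh(Z)}(\mc{I}_Z/\mc{I}_Z^2)$ is the coordinate ring of the affine total space of $\mc{N}_{Z/X}$, and non-fastenedness ($\chi^n = 1$) forces $(\mc{N}^{\vee}_{Z/X})^{\otimes n} \cong \oh_Z$ $G$-equivariantly, hence a $G$-invariant generator of $\mc{I}_Z^n/\mc{I}_Z^{n+1}$; if one could lift this to a $G$-invariant element of $\mc{I}_Z^n$, then $X$ would carry a nonconstant $G$-invariant function, contradicting that $X$ has a dense $G$-orbit. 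The obstruction to this lift is a rational cohomology class of $G$ with coefficients in $\mc{I}_Z^{n+1}$ that vanishes when $G$ is reductive; handling it in general — or producing an \'etale/formal slice adapted to this codimension-one situation without reductivity — is the crux.
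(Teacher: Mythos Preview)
Your reductive-case argument is correct and takes a genuinely different route from the paper. The paper does not use Luna's slice here at all; instead it works on the deformation to the normal cone $X_h$ of $Z$ in $X$ (affine since $X$ is), observes that a finite image for $\chi$ would produce a $G$-invariant function $t^r$ on $\mc{N}_{Z/X}$ nonconstant along the fibers, extends it to a $G$-invariant function on all of $X_h$ using exactness of $(-)^G$ for reductive $G$, and gets a contradiction because this extension must be constant along the orbits $U \times \{a\}$ and hence along their flat limit $\mc{N}^{\circ}_{Z/X}$. So the paper's proof \emph{also} uses reductivity of $G$ (explicitly: ``Since $G$ is reductive, the functor of $G$-invariants is exact''), even though the proposition as stated does not carry that hypothesis; you were right to flag the non-reductive case as the hard part. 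Your ``alternative'' via the $\mc{I}_Z$-adic filtration is in fact very close to the paper's argument: the Rees algebra of that filtration is exactly $\oh(X_h)$, and lifting the $G$-invariant generator of $\mc{I}_Z^n/\mc{I}_Z^{n+1}$ is essentially the extension step the paper performs.

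One small gap in your main argument: the inference ``finiteness of the orbit set on $X$ forces $G^z$ to have only finitely many orbits on $S$'' does not follow from the saturation map being merely \'etale onto an open subset --- an \'etale $G$-map can in principle have infinitely many orbits over a single orbit. What you need is the \emph{strongly} \'etale form of Luna's theorem: the square over the GIT quotients is Cartesian and $S/\!\!/G^z \to W/\!\!/G$ is \'etale; since $X$ has a dense orbit, $W/\!\!/G$ is a point, so $S/\!\!/G^z$ is finite and hence $\oh(S)^{G^z}=k$. With this in hand you can bypass orbit-counting entirely: pull back the $\mu_n$-invariant $t^n$ along $\psi$ to obtain a nonconstant $G^z$-invariant function on $S$, contradicting $\oh(S)^{G^z}=k$. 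Either way the argument closes.
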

	\begin{proof}
		Let $X$ be an affine $G$-chain. Since normalization maps of finite type schemes are finite, hence affine, we may assume that $X$ is also smooth and irreducible. Pick an arbitrary closed $G$-orbit $Z \subset X$. We shall show that $Z$ is fastened to the open orbit of $X$. 
		
		Choose a point $z \in Z$. Assume for sake of contradiction that the character $G^z \to \gm$ defined in Remark~\ref{rmk-fastened-normal} is not surjective. Then it factors through some finite subgroup $\mu_r \subset \gm$ where $\mu_r$ is the group of $r$-th roots of unity. Choose a linear coordinate $\mc{N}_{Z/X}|_z \simeq \BA^1_t$, and define the regular function 
		\[
			f : \mc{N}_{Z/X} \to \BA^1
		\]
		via the formula $f(t) = t^r$. 
		
		Since $\mc{N}_{Z/X}$ is the associated bundle produced by the homogeneous space $G / G^z \simeq Z$ and the action $G^z \acts \mc{N}_{Z/X}|_z$, the function $f$ defined above extends to a unique $G$-invariant function $\tilde{f}$ on $\mc{N}_{Z/X}$. By construction, it is nonconstant along the fibers of $\mc{N}_{Z/X}$. 
		
		Let 
		\[
			X_h := \on{Bl}_{Z \times \{0\}}(X \times \BA^1_h) \setminus (X \times \{0\})^{\sim} \to \BA^1_h
		\]
		be the deformation to the normal cone of $Z \subset X$. (This construction appears again in~\ref{I}.) Since $X$ is affine, so is $X_h$. Since $G$ is reductive, the functor of $G$-invariants is exact, so we may non-uniquely extend $\tilde{f}$ to a $G$-invariant regular function $\tilde{f}_h$ on $X_h$. Then $\tilde{f}_h$ is constant along the $G$-orbits $U \times \{a\} \subset X_h$ and hence along their limit, which is $\mc{N}_{Z/X}^{\circ}$.\footnote{This argument uses that $X_h \to \BA^1_h$ is flat.} But $\tilde{f}_h|_{\mc{N}_{Z/X}^\circ} = \tilde{f}$ by construction, so this constancy contradicts the previous paragraph. 
	\end{proof}
	
	Outside the world of affine varieties, there are many examples of non-fastened chains. Here is one:
	
	\begin{ex}
		Let $G = SL_2(\mathbb{C})$ and let $V$ be its four-dimensional irreducible representation. $V$ is identified with the space of homogeneous degree-3 polynomials in the variables $x$ and $y$.
		
		Let $U$ be the 3-dimensional $G$-orbit passing through the element $x^2y \in V$. There are two $G$-orbits in its boundary: $\{0\}$ and the 2-dimensional $G$-orbit $Z$ passing through the element $z=x^3 \in V$. Let $X$ be the irreducible chain $U \cup Z$. Note that
		\[
		    G^z = \left\{\begin{pmatrix} \xi & a \\ 0 & \xi^{-1}\end{pmatrix}: \xi^3=1\right\}. 
	    \]
		The variety $X$ is singular, but its normalization $\tilde{X}$ is smooth. For any closed orbit $\tilde{Z} \subset \tilde{X}$, the natural map $\tilde{Z} \to Z$ is a $G$-equivariant cover. In particular, for any $\tilde{z} \in \tilde{Z}$, we have an inclusion of stabilizer groups 
		\[
		    G^{\tilde{z}} \subseteq G^z =  \left\{\begin{pmatrix} \xi & a \\ 0 & \xi^{-1}\end{pmatrix}: \xi^3=1\right\}. 
		\]
		Thus $G^{\tilde{z}}$ does not contain a torus, so Remark~\ref{rmk-fastened-normal} implies that $X$ is not fastened.
	\end{ex}

	\subsection{Fastening data}
	We will show that fastened chains are glued together (or `fastened') by certain $\gm$-invariant curves, as described below: 

	\begin{defn}\label{def:fasteningdatum}
	Let $X$ be a smooth irreducible $G$-chain. Let $U$ be the open orbit of $X$, and let $Z$ be some closed orbit. 
		A \emph{fastening datum} for $(U, Z)$ is a pair $(\gamma, \ell)$ given as follows: 
		\begin{itemize}
			\item $\gamma: \gm \to G$ is a co-character. 
			\item $\ell: \BA^1 \hra X$ is a locally-closed embedding. 
		\end{itemize}
		We require $(\gamma, \ell)$ to satisfy the following properties: 
		\begin{enumerate}
			\item $\ell^{-1}(Z) = \{0\}$ and $\ell$ is transverse to $Z$. 
			\item $\gamma(\gm)$ leaves $\ell(\BA^1) \subset X$ invariant and acts transitively on $\ell(\BA^1 \setminus \{0\})$. 
		\end{enumerate}
	\end{defn}
	
	\begin{prop}\label{prop:fasteningdata}
		In the setting of Definition \ref{def:fasteningdatum}, $Z$ is fastened to $U$ if and only if there is a fastening datum $(\gamma,\ell)$ for $(U,Z)$.
	\end{prop}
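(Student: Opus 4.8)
The plan is to prove both directions by passing to the normal bundle $\mc{N}_{Z/X}$, since by Remark~\ref{rmk-fastened-normal} the condition that $Z$ is fastened to $U$ is equivalent to surjectivity of the character $\chi : G^z \to \gm$ describing the action of $G^z$ on the line $\mc{N}_{Z/X}|_z$.

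For the ``only if'' direction, suppose $Z$ is fastened to $U$, so $\chi$ is surjective. Since $\chi$ is a surjection of algebraic groups onto $\gm$, I would first produce a cocharacter $\gamma : \gm \to G^z \subset G$ with $\chi \circ \gamma$ equal to a positive power $t \mapsto t^n$ of the identity character; such a $\gamma$ exists because $G^z$ is an affine algebraic group, its reductive quotient has a maximal torus surjecting onto $\gm$ via $\chi$, and cocharacters lift along that surjection. (Composing with a further power of $\gm$ if necessary, one can arrange $n > 0$.) Then $\gamma(\gm)$ acts on $\mc{N}_{Z/X}|_z$ with all weights equal to $n \neq 0$, hence acts on this line by scaling with nonzero weight, so it acts transitively on $\mc{N}^{\circ}_{Z/X}|_z \cong \BA^1 \setminus \{0\}$. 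The remaining task is to ``thicken'' the fiber-line $\mc{N}_{Z/X}|_z$ to an actual curve $\ell : \BA^1 \hra X$ inside $X$, transverse to $Z$, with the same $\gamma$-equivariance. I would do this by a slice/linearization argument: choose a $\gamma$-stable (equivalently, by Luna-type linearization near the fixed point $z$, or directly by averaging a splitting of $T_zX \twoheadrightarrow \mc{N}_{Z/X}|_z$ over the reductive group $\gamma(\gm)$) one-dimensional subspace of $T_zX$ lifting $\mc{N}_{Z/X}|_z$, exponentiate or use the $\gm$-action to integrate it to a $\gamma$-invariant locally closed curve through $z$, shrink so that it meets $Z$ only at $z$, and use that $\gamma$ acts with nonzero weight to identify it equivariantly with $\BA^1$; transversality to $Z$ is exactly the statement that the curve's tangent at $z$ projects isomorphically onto $\mc{N}_{Z/X}|_z$. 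This gives the required pair $(\gamma, \ell)$.

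For the ``if'' direction, suppose a fastening datum $(\gamma, \ell)$ exists. Let $z = \ell(0) \in Z$; after translating by $G$ we may use this $z$ in Remark~\ref{rmk-fastened-normal}. Condition (2) says $\gamma(\gm)$ preserves $\ell(\BA^1)$ and acts transitively on $\ell(\BA^1 \setminus \{0\})$, so $\gamma(\gm)$ fixes $z$, i.e.\ $\gamma$ factors through $G^z$, and acts on the tangent line $T_z(\ell(\BA^1)) \subset T_zX$ with some nonzero weight $n$ (nonzero because the action on $\ell(\BA^1 \setminus\{0\}) \cong \gm$ is transitive hence nontrivial). Condition (1), transversality of $\ell$ to $Z$, says the composite $T_z(\ell(\BA^1)) \hookrightarrow T_zX \twoheadrightarrow \mc{N}_{Z/X}|_z$ is an isomorphism, and it is $\gamma(\gm)$-equivariant, so $\gamma(\gm)$ acts on $\mc{N}_{Z/X}|_z$ with the same nonzero weight $n$. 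Therefore $\chi \circ \gamma : \gm \to \gm$ is $t \mapsto t^n$ with $n \neq 0$, which has finite cokernel but — since $\gm$ is divisible and connected — forces $\chi$ itself to be surjective: its image is a subgroup of $\gm$ containing $\gamma(\gm)^{\cdot n}$'s image, which is all of $\gm$. Hence $\chi$ is surjective and $Z$ is fastened to $U$ by Remark~\ref{rmk-fastened-normal}.

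The main obstacle is the construction of $\ell$ in the ``only if'' direction: producing a genuine curve in $X$ (not merely in the normal bundle) that is $\gamma$-invariant and transverse to $Z$. I expect the cleanest route is to work $\gamma(\gm)$-equivariantly near the fixed point $z$ — using that a $\gm$-action can be linearized in a neighborhood of a fixed point, or equivalently that the $\gm$-equivariant structure on $\mc{N}_{Z/X}$ plus the contraction provided by the nonzero weight lets one lift a normal direction to an orbit curve — and then to invoke Artin--Rees/the equivalence recalled in the introduction (Proposition~\ref{ar}) only implicitly, since here we just need the geometric curve. Everything else is a routine unwinding of weights and the definition of transversality.
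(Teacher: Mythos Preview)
Your proposal is correct and follows essentially the same route as the paper. The ``if'' direction is identical in substance: pass to tangent spaces, use transversality to see that the composite $\mc{T}_0\BA^1 \to \mc{T}_zX \to \mc{N}_{Z/X}|_z$ is a $\gm$-equivariant isomorphism with nonzero weight, and conclude via Remark~\ref{rmk-fastened-normal}. For the ``only if'' direction, the paper also first produces a one-dimensional torus $T^z \subset G^z$ mapping nontrivially under $\chi$ (via the Levi decomposition, exactly as you sketch), and then lifts a $T^z$-invariant line in $\mc{T}_zX$ to a curve in $X$ via linearization. The only place where the paper is more precise than your sketch is in this last step: it invokes Sumihiro's theorem to obtain a $T^z$-invariant \emph{affine} open neighborhood $W_z \ni z$ (needed since $X$ is not assumed affine), then applies the Luna slice theorem to get a $T^z$-equivariant \'etale map $W_z \to \mc{T}_zX$, and finally pulls back the chosen line $k\cdot v \subset \mc{T}_zX$ and takes the connected component through $z$ to obtain $\ell$. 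You should name Sumihiro explicitly --- your phrase ``Luna-type linearization near the fixed point'' hides the fact that Luna requires an affine ambient, which is exactly what Sumihiro provides.
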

	
	\begin{proof}
		First, assume that $Z$ is fastened to $U$. Fix a closed point $z \in Z$. By Remark~\ref{rmk-fastened-normal}, the action $G^z \acts \mc{N}_{Z/X}|_z$ occurs via a surjective character $G^z \xrightarrow{\chi} \gm$. Let $L^z \subset G^z$ be the identity component of a Levi subgroup of $G$. Since $\ker(\chi)$ contains the unipotent radical of $G^z$, the surjectivity of $\chi$ implies that $L^z \xrightarrow{\chi} \gm$ is also surjective. Since one-dimensional tori are dense in reductive groups, there is a one-dimensional torus $T^z \subset L^z$ such that $T^z \xrightarrow{\chi} \gm$ is nontrivial (hence surjective). Since $T^z$ is reductive, the projection map $\mc{T}_zX \to \mc{N}_{Z/X}|_z$ has a $T^z$-invariant section. The image of this section is a line $k \cdot v \subset \mc{T}_z X$ on which $T^z$ acts by the same nontrivial character $\chi$. 
		
		By Sumihiro's theorem on torus actions, which applies since $X$ is smooth and hence normal, there exists a $T^z$-invariant open affine subscheme $W_z \subset X$ which contains $z$. Applying the Luna Slice Theorem to the $T^z$-orbit given by $\{z\} \subset W_z$, we conclude (possibly upon replacing $W_z$ by a smaller $T^z$-invariant affine open subscheme) that there is a $T^z$-equivariant \'etale map $\varphi : W_z \to \mc{T}_zX$ which induces an isomorphism on tangent spaces at $z \in W_z$. Consider the base change 
		\begin{cd}[column sep = 0.6in]
			C \arrow[r,hookrightarrow] \arrow[d] & W_z \arrow[d,"\varphi"]\\
			k \cdot v \arrow[r,hookrightarrow, "\text{cl.emb.}"] & \mc{T}_zX 
		\end{cd}
		where the bottom horizontal map was constructed in the previous paragraph. Certainly $C$ contains $z \in W_z$ because $k \cdot v$ contains $0 \in \mc{T}_z X$. Let $C_z \subset C$ be the connected component of $C$ which contains $z$. The map $C_z \to k \cdot v$ is \'etale and $\gm$-equivariant, so the previous two sentences imply that it is an isomorphism.\footnote{Proof: Since $C_z$ is a smooth affine connected curve equipped with a $\gm$-action with a closed orbit $\{z\}$, it is isomorphic to $\BA^1$ with some multiple of the standard $\gm$-action. Since the map $C_z \to k \cdot v$ induces an isomorphism on tangent spaces at $z \in C_z$, this multiple must be the same as that given by $\chi|_{T^z}$. Now the map $C_z \to k \cdot v$ is determined by its behavior on the open $\gm$-orbits; there it is an isomorphism, and one can easily check that this implies the claim.}
		
		We define $\ell$ to be the composition 
		\[
			\BA^1 \simeq k \cdot v \simeq C_z \xhookrightarrow{\text{cl.emb.}} W_z \xhookrightarrow{\text{open emb.}} X, 
		\]
		and we define $\gamma$ to be the composition
		\[
			\gm \simeq T^z \hra G^z \hra G
		\]
		where $\gm \simeq T^z$ is an arbitrary isomorphism. Then $(\gamma, \ell)$ is a fastening datum for $(U, Z)$. 
		
		Conversely, assume that $(\gamma, \ell)$ is a fastening datum for $(U, Z)$.  Definition~\ref{def:fasteningdatum}(2) implies that the map $\BA^1 \xrightarrow{\ell} X$ is $\gm$-equivariant with respect to some nontrivial linear action $\gm \acts \BA^1$ (which is not necessarily the standard one). Passing to tangent spaces, we get a $\gm$-equivariant map 
		\[
			\BA^1 \simeq \mc{T}_0 \BA^1 \xrightarrow{d\ell} \mc{T}_zX
		\]
		where the action of $\gm \acts \mc{T}_zX$ is via the cocharacter $\gamma : \gm \to G^z$ (this uses Definition~\ref{def:fasteningdatum}(1)). By the transversality statement in  Definition~\ref{def:fasteningdatum}(2), the composition 
		\[
			\BA^1 \simeq \mc{T}_0 \BA^1 \xrightarrow{d\ell} \mc{T}_zX \to \mc{N}_{Z/X}|_z
		\]
		is nonzero, hence an isomorphism. It is also $\gm$-equivariant, so we conclude that the action $\gamma(\gm) \acts \mc{N}_{Z/X}|_z$ is nontrivial. Hence, the action $G^z \acts \mc{N}_{Z/X}|_z$ is nontrivial. By Remark~\ref{rmk-fastened-normal}, this implies that $Z$ is fastened to the open orbit in $X$. 
	\end{proof}

\section{Equivariant sheaves on smooth simple chains}\label{sec:main}

	\subsection{Overview} 
	In this section, $X = U \cup Z$ is a smooth $G$-chain with one open and one closed orbit. Assume that $X$ is fastened, and let $(\gamma, \ell)$ be a fastening datum (Definition~\ref{def:fasteningdatum}). 
	
	Given this datum, we will describe the category $\QCoh^G_{\tf}(X)$ of $G$-equivariant torsion-free quasicoherent sheaves on $X$ in algebraic terms (Theorem~\ref{thm-main}). The main idea is that the line $\ell : \BA^1 \to X$ functions as a `basepoint' which sees both the open and the closed orbit of $X$. More precisely, we will explain how $X$ is related to the `homogeneous space' given by $G \times \BA^1$ modulo the stabilizer scheme of $\ell(\BA^1)$, see Lemma~\ref{lem-gs}. 
	
	The special case when $G \acts X$ is the standard (weight one) action $\gm \acts \BA^1$ will feature prominently in our analysis. Here, the desired classification is elementary and well-known: 	
	\begin{prop}[Artin--Rees] \label{ar} 
		Let $\mathrm{Vect}^{\mathrm{filt}}$ be the category of vector spaces equipped with exhausting (but not necessarily separating) $\mathbb{Z}$-filtrations. There is a natural equivalence of monoidal categories:
		\[
		\QCoh^{\gm}_{\tf}(\BA^1) \simeq \mathrm{Vect}^{\mathrm{filt}}
		\]
	\end{prop}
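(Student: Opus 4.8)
The plan is to run everything through the standard equivalence between $\gm$-equivariant quasicoherent sheaves on $\BA^1 = \Spec k[t]$ and $\BZ$-graded $k[t]$-modules, and then to match torsion-freeness with $t$-injectivity and the Rees construction with filtrations. Recall the dictionary $\QCoh^{\gm}(\BA^1) \simeq k[t]\text{-}\mathrm{Mod}^{\BZ}$, sending $\mc F$ to $M := \Gamma(\BA^1,\mc F)$ with its weight decomposition $M = \bigoplus_n M_n$, where $t$ has weight $1$, so $tM_n \subseteq M_{n+1}$; this is the usual identification of equivariant sheaves on an affine $G$-scheme with comodules, specialized to $G=\gm$. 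Under it, I would check that $\mc F$ is torsion-free iff $t\colon M\to M$ is injective. One direction is clear; for the converse the point is that for a \emph{graded} module, $t$-injectivity already forces $(t-a)$-injectivity for every $a\in k$, since a nonzero element of $\ker(t-a)$ has finitely many homogeneous components, hence a lowest nonzero one $x_n$, which the relation $a x_n = t x_{n-1}$ forces to vanish.

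Next I would set up the two functors. In one direction, the \emph{generic fibre}: a $t$-injective graded $M$ goes to $V := \colim\big(\cdots\xrightarrow{t}M_n\xrightarrow{t}M_{n+1}\xrightarrow{t}\cdots\big)$ with filtration $F_nV := \Im(M_n\to V)$. Since $tM_n\subseteq M_{n+1}$ this is increasing; it is exhausting because $V$ is the colimit of the $M_n$; and it need not be separating (take $M = k[t,t^{-1}]$, giving $V=k$ and $F_n=k$ for all $n$), which is exactly why $\Vect^{\mathrm{filt}}$ is defined with exhausting-but-not-separating filtrations. In the other direction, the \emph{Rees construction}: $(V,F_\bullet)\mapsto \on{Rees}(V,F) := \bigoplus_n F_nV\cdot t^n \subseteq V\otimes_k k[t,t^{-1}]$, a graded $k[t]$-submodule of a free module, hence $t$-injective. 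These are mutually quasi-inverse: $t$-injectivity makes each $M_n\to V$ injective with image $F_nV$, giving $M\xrightarrow{\sim}\on{Rees}(V,F)$ as graded $k[t]$-modules; and inverting $t$ in $\on{Rees}(V,F)$ recovers $V$ in weight $0$ (using that $F$ is exhausting) together with the same induced filtration. Naturality is immediate.

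Finally I would upgrade to a monoidal equivalence. The tensor structures are $\otimes_{k[t]}$ on the sheaf (graded-module) side and, on $\Vect^{\mathrm{filt}}$, the filtration $(F\otimes G)_n := \sum_{i+j=n}\Im\big(F_iV\otimes_k G_jW\to V\otimes_k W\big)$ on $V\otimes_k W$, with unit $k$ filtered by $F_n=k$ for $n\ge 0$ and $0$ otherwise. There is an evident natural map $\on{Rees}(V,F)\otimes_{k[t]}\on{Rees}(W,G)\to\on{Rees}(V\otimes_k W,\,F\otimes G)$, surjective by the definition of $F\otimes G$ and carrying $k[t]=\on{Rees}(k,-)$ to the unit; the associativity and unit coherence diagrams are then routine.

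Nothing here is deep; the step requiring the most care is the last one, specifically the injectivity of the comparison map for tensor products. Here I would use that over the PID $k[t]$ every torsion-free module is flat, so the source $\on{Rees}(V,F)\otimes_{k[t]}\on{Rees}(W,G)$ is torsion-free; after inverting $t$ both sides become $(V\otimes_k W)\otimes_k k[t,t^{-1}]$, so the kernel is a torsion submodule of a torsion-free module and hence zero. The other thing to watch throughout is that no finiteness is assumed — $V$ may be infinite-dimensional and the filtration need not be separating — so one must avoid any argument that implicitly uses coherence or boundedness of the filtrations.
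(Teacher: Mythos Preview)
Your proof is correct and follows the same route as the paper's: identify $\QCoh^{\gm}(\BA^1)$ with $\BZ$-graded $k[t]$-modules, observe that torsion-freeness is exactly $t$-injectivity, and pass to the colimit to get an exhaustively filtered vector space with Rees as the inverse. You go further than the paper in two respects---you justify why $t$-injectivity alone suffices for torsion-freeness on graded modules, and you actually verify the monoidal compatibility (the paper asserts monoidality but does not prove it)---but the core argument is the same.
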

	\begin{proof}
		The category $\QCoh^{\gm}(\BA^1)$ is equivalent (via global sections) to the category of graded modules over the algebra $k[x]$, where the generator $x$ is placed in degree 1. Each such module is encoded by a diagram
		\[
		\cdots \to V_i \to V_{i+1} \to \cdots
		\]
		of vector spaces, where the maps are arbitrary. The module is torsion-free if and only if the maps are all injective. In this case, the diagram of vector spaces is equivalent to the datum of a single vector space $V = \colim_i V_i$ equipped with the exhausting filtration $F$ defined by $F^{\ge i}V = V_i$. The reverse direction is clear. 
	\end{proof}
	
	We will often refer to this equivalence as the Artin--Rees construction. Since filtrations already appear here, it is unsurprising that they also appear in the statement of the main classification result (Theorem~\ref{thm-main}). 
	
	\begin{rmk} \label{rmk-monoidal}
		We now explain how we propose to get `algebraic' descriptions of categories of equivariant sheaves. The appearance of the word `monoidal' in the proposition implies that we get analogous equivalences involving algebro-geometric objects defined over $\BA^1 / \gm$. For example, considering algebra objects on both sides yields the equivalence 
		\[
		\on{Sch}^{\on{aff}, \tf}_{\BA^1 / \gm} \simeq (\text{filtered rings})^{\op}
		\]
		where the left hand side is the category of affine schemes over $\BA^1 / \gm$ for which the structure map $S \to \BA^1 / \gm$ is flat. 
		
		\collapse
		
		This equivalence intertwines the Cartesian monoidal structure on the left hand side with the $\otimes$ monoidal structure on the right hand side. Passing to group objects with respect to these monoidal structures, we obtain a (contravariant) equivalence between affine group schemes flat over $\BA^1 / \gm$ and filtered Hopf algebras. This list of equivalences continues indefinitely, but essentially we will only need one more observation: given such a group scheme $\mc{G} \to \BA^1 / \gm$ corresponding to a filtered Hopf algebra $A$, the category of $\gm$-equivariant torsion-free quasicoherent sheaves on the stack $\BA^1 / \mc{G}$ is equivalent to the category of filtered comodules for $A$. This is applied in Definition~\ref{claim1}. 
	\end{rmk}
	
	\subsubsection{Further comments} 
	
	In the above proposition, the `torsion-free' requirement is equivalent to flatness. Similarly, a $G$-equivariant sheaf on $X$ is torsion-free if and only if it is flat, because $Z$ has codimension one. In the rest of this section, we use `torsion-free' and `flat' interchangeably. 
	
	In this section, a (back)slash means a quotient in the stack-theoretic sense. 
	
	To clarify what is canonical and what is not, let us explain where the fastening datum $(\gamma, \ell)$ is used. In~\ref{I}, the fastening datum is not used, but Lemma~\ref{deform} relies on the assumption that $X$ is fastened. In~\ref{slice-line}, the line $\ell$ (satisfying Definition~\ref{def:fasteningdatum}(1)) is used to define the stabilizer scheme $G^s$. From~\ref{slicing-gm} onward, we use the full datum $(\gamma, \ell)$.  
	
	\subsection{The punctured ideal sheaf} \label{I} 
	
	Let $\I_{Z/X}$ be the ideal sheaf of $Z \subset X$, which is a line bundle on $X$. Consider its total space 
	$
	\curSpec (\curSym_{\oh_X}^\cdot \I_{Z/X}^\vee)
	$
	which we (abusively) also denote $\I_{Z/X}$. Let
	\[
	\I^\circ_{Z/X} := \I_{Z/X} \setminus (\text{zero section})
	\]
	be the punctured ideal sheaf. Since $Z$ is $G$-invariant in $X$, there are commuting actions
	\begin{equation*}
	G \acts \I^\circ_{Z/X} \lacts \gm
	\end{equation*}
	where $\gm$ acts by dilation on the fibers. Since $\I^\circ_{Z/X} / \gm \simeq X$, we deduce that 
	\begin{equation}\label{equiv1} 
	\QCoh^G(X) \simeq \QCoh^{G \times \gm}(\I^\circ_{Z/X}) \simeq \QCoh^{\gm}(G \back \I^\circ_{Z/X}). 
	\end{equation}
	Our goal in this and the next subsection is to explain how $G \back \I^\circ_{Z/X}$ is related to the stabilizer scheme of the line $\ell : \BA^1 \to X$. 
	
	\begin{rmk} \label{rmk-I-geo}
		We point out two facts about the geometry of $\I^\circ_{Z/X}$. 
		\begin{enumerate}[label=(\roman*)]
			\item $\I^\circ_{Z/X}$ is closely related to the deformation of $X$ to the normal cone of $Z \subset X$. To see this, note that the injection of sheaves $\I_{Z/X} \hra \oh_X$ defines a (noninjective) map of varieties 
			\[
			\I_{Z/X} \xrightarrow{p} X \times \BA^1. 
			\]
			It is easy to check that this is isomorphic to the map 
			\[
			\on{Bl}_{Z \times \{0\}}(X \times \BA^1) \setminus (Z \times \BA^1)^{\sim} \to X \times \BA^1
			\]
			where the left hand side is obtained by taking the blow-up of $X \times \BA^1$ along $Z \times \{0\}$ and deleting the proper transform of $Z \times \BA^1$. On the other hand, the deformation to the normal cone is given by 
			\[
			\on{Bl}_{Z \times \{0\}}(X \times \BA^1) \setminus (X \times \{0\})^{\sim} \to X \times \BA^1. 
			\]
			This is not the same as the previous map, but the deformation to the normal cone contains the open subscheme
			\[
			\on{Bl}_{Z \times \{0\}}(X \times \BA^1) \setminus \Big((X \times \{0\})^{\sim} \sqcup (Z \times \BA^1)^{\sim}\Big)
			\]
			which is isomorphic to $\I^\circ_{Z/X}$ as schemes over $X \times \BA^1$. 
			\item Note that $\mc{I}_{Z/X}^\circ|_{Z}$ is the punctured conormal bundle of $Z$, while the fiber over $0 \in \BA^1$ of the scheme displayed in the last line above is the punctured normal bundle of $Z$. This apparent contradiction is resolved by the observation that the punctured total spaces of $\mc{L}$ and $\mc{L}^{\vee}$ are canonically isomorphic (but not in a $\gm$-equivariant way), for any line bundle $\mc{L}$. Thus, there is no harm in writing $\mc{I}_{Z/X}^\circ|_Z \simeq \mc{N}_{Z/X}^\circ$.
		\end{enumerate}
	\end{rmk}
	
	Define the `projection' map 
	$
		\pi = \mathrm{pr}_2 \circ p : \I^\circ_{Z/X} \to \BA^1. 
	$
	\begin{lem} \label{deform} 
		We have the following: 
		\begin{enumerate}[label=(\roman*)]
			\item $\pi$ is smooth and $G \times \gm$-equivariant, where the action $(G \times \gm) \acts \BA^1$ is given by $(\on{triv}, \on{standard})$. 
			\item As $G$-varieties, the closed fibers of $\pi$ are given by 
			\[
				\pi^{-1}(c) \simeq \begin{cases}
					U &\text{ if } c \neq 0 \\
					\mc{N}^{\circ}_{Z/X} &\text{ if } c = 0. 
				\end{cases}
			\]
			In particular, $G$ acts transitively on the fibers of $\pi$. 
		\end{enumerate}
	\end{lem}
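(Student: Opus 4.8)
The plan is to read off the structure of $\pi$ almost directly: (ii) comes from unwinding the definition of $\pi$ together with Remark~\ref{rmk-I-geo}, and the smoothness in (i) is deduced afterwards; the hypothesis that $X$ is fastened is used only at the very last step, to get transitivity. For the equivariance in (i), note that $p : \I_{Z/X} \to X \times \BA^1$ is induced by the $G$-equivariant inclusion of line bundles $\I_{Z/X} \hra \oh_X$ (using that $Z$, hence $\I_{Z/X}$, is $G$-stable), where $X \times \BA^1$ is the total space of $\oh_X$: on it $G$ acts through its action on $X$ alone — the $\BA^1$-coordinate being the $G$-fixed tautological section $1 \in \Gamma(X,\oh_X)$ — and $\gm$ scales the $\BA^1$-factor with weight one, matching its fibrewise dilation on $\I_{Z/X}$. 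Hence $p$, and therefore $\pi = \pr_2 \circ p$, is $(G\times\gm)$-equivariant with $(G\times\gm)\acts\BA^1$ via $(\triv, \on{standard})$. Smoothness of $\pi$ I would defer to the end.

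\emph{The fibres, and transitivity.} Let $(x,v)$ be a closed point of $\I^\circ_{Z/X}$, with $x\in X$ and $v$ a nonzero vector in the fibre of $\I_{Z/X}$ over $x$; then $\pi(x,v) = \phi_x(v)$, where $\phi_x\colon \I_{Z/X}\otimes k(x)\to \oh_X\otimes k(x) = k(x)$ is the fibre of the above inclusion, which is an isomorphism when $x\notin Z$ and is zero when $x\in Z$. Consequently, for $c\neq 0$ the projection $(x,v)\mapsto x$ identifies $\pi^{-1}(c)$ with $X\setminus Z = U$, while $\pi^{-1}(0)$ is the punctured total space $\I^\circ_{Z/X}|_Z$ of the line bundle $\I_{Z/X}|_Z = \mc{N}^\vee_{Z/X}$ on $Z$, which is canonically — hence $G$-equivariantly — isomorphic to $\mc{N}^\circ_{Z/X}$ by Remark~\ref{rmk-I-geo}(ii). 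Both identifications are $G$-equivariant, all the data being so. This gives (ii) except for transitivity, which is then immediate: $U$ is the open $G$-orbit of $X$, and $\mc{N}^\circ_{Z/X}$ is a single $G$-orbit precisely because $X$ is fastened (Definition~\ref{def:fastened}).

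\emph{Smoothness.} By Remark~\ref{rmk-I-geo}(i), $\pi$ is the restriction, to an open subscheme, of the structure map over $\BA^1$ of the deformation of $X$ to the normal cone of $Z$, and that map is flat; so $\pi$ is flat. (Alternatively: $\I^\circ_{Z/X}$ is reduced and each of its irreducible components dominates the smooth curve $\BA^1$, which already forces flatness.) Given flatness, it is enough that all closed fibres be smooth: the fibre $U$ is a $G$-orbit, hence smooth, and the fibre $\mc{N}^\circ_{Z/X}$ is open in the total space of a line bundle over the smooth variety $Z$, hence smooth; both have dimension $\dim X$. A flat, finite-type morphism with smooth fibres is smooth, which proves (i).

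I do not expect a genuine obstacle here: the one step that repays attention is the identification $\pi^{-1}(0)\simeq\mc{N}^\circ_{Z/X}$, where one must keep track that the line bundle actually occurring on $Z$ is the conormal bundle, identified with the normal bundle only non-$\gm$-equivariantly (Remark~\ref{rmk-I-geo}(ii)) — which is harmless, since (ii) only asserts an identification of $G$-varieties.
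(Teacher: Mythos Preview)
Your proof is correct and follows essentially the same route as the paper: equivariance from the construction of $p$, the fibre description from unwinding $\pi$ (equivalently, from Remark~\ref{rmk-I-geo}), transitivity from fastenedness, and smoothness via flatness plus smooth equidimensional fibres. The only difference is the flatness step: the paper invokes Miracle Flatness (smooth source and target, equidimensional fibres), whereas you use either flatness of the deformation to the normal cone or the ``reduced scheme over a smooth curve with dominant components'' criterion --- all three are valid and interchangeable here.
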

	\begin{proof}
		The map $\pi$ is $G \times \gm$-equivariant because $p$ and $\pr_2$ are both $G \times \gm$-equivariant. The description of fibers of $\pi$ follows from Remark~\ref{rmk-I-geo} and the transitivity of the $G$-action on fibers follows from the assumption that $X$ is fastened. This proves (ii). 
		
		Since the domain and target of $\pi$ are smooth, and each fiber has dimension equal to $\dim X$, the Miracle Flatness Theorem implies that $\pi$ is flat. Since each fiber is also smooth, this implies that $\pi$ is smooth (since $\pi$ is finitely presented), which proves (i). 
	\end{proof}

	\subsection{The group scheme \texorpdfstring{$G^s$}{Gs} associated to a fastening datum}\label{slice-line}
	
	The choice of a line $\ell : \BA^1 \to X$ which satisfies Definition~\ref{def:fasteningdatum}(1) is equivalent to the choice of a section $s : \BA^1 \to \I^\circ_{Z/X}$ of the projection map $\pi$. The forward direction of this equivalence sends $\ell$ to the proper transform of $(\ell, \id_{\BA^1}) : \BA^1 \hra X \times \BA^1$ under the blow-up considered in Remark~\ref{rmk-I-geo}(i), and the transversality hypothesis is needed to ensure that the proper transform lands in the open subscheme $\I^\circ_{Z/X}$. 
	
	In the rest of this section, we will work in the category of schemes (or stacks) over $\BA^1$. (Starting from the next subsection, these objects will also be $\gm$-equivariant, i.e.\ they descend to $\BA^1 / \gm$.) From this point of view, the section $s$ functions as a `basepoint' for $\mc{I}^\circ_{Z/X}$, which we view as a homogeneous space for the group scheme $G \times \BA^1$. The rest of this subsection makes this statement precise. 
	
	Define the map $a$ via the commutative diagram 
	\begin{cd}
		G \times \BA^1 \ar[d, hookrightarrow, swap, "\id_G \times s"] \ar[rd, "a"] \\
		G \times \I^\circ_{Z/X} \ar[r, "\act"] & \I^\circ_{Z/X}
	\end{cd}
	where $\act$ is the $G$-action on $\I^{\circ}_{Z/X}$. Define $G^s$ as the fibered product 
	\begin{cd}
		G^s \ar[r, hookrightarrow] \ar[d, swap] & G \times \BA^1  \ar[d, "a"] \\
		\BA^1 \ar[r, hookrightarrow, "s"] & \I^\circ_{Z/X}
	\end{cd}	
	
	\begin{lem}\label{lem-gs}
		Work in the category of schemes over $\BA^1$. 
		\begin{enumerate}[label=(\roman*)]
			\item $G^s$ is smooth over $\BA^1$. 
			\item The map $G^s \hra G \times \BA^1$ expresses the former as a group subscheme of the latter. 
			\item The quotient $(G \times \BA^1) / G^s$ taken in the smooth topology (or any finer one) identifies with $\I^\circ_{Z/X}$ via the map $a$. 
		\end{enumerate}
	\end{lem}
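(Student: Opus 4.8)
The plan is to exploit the transitivity and smoothness already established in Lemma~\ref{deform}, together with the general yoga of stabilizer group schemes for group actions on smooth schemes, all carried out relative to the base $\BA^1$. First I would observe that all the objects in play — $G \times \BA^1$, $\I^\circ_{Z/X}$, and the section $s$ — live over $\BA^1$, and that the map $\pi : \I^\circ_{Z/X} \to \BA^1$ is smooth and $G$-equivariant with $G$ acting transitively on fibers (Lemma~\ref{deform}). So fiberwise over a point $c \in \BA^1$, the map $a_c : G \to \pi^{-1}(c)$ is the orbit map through $s(c)$, its image is everything (transitivity), and $G^s_c$ is by definition the scheme-theoretic stabilizer of $s(c)$ in $G$. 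For (ii), the fact that $G^s \hra G \times \BA^1$ is a subgroup scheme over $\BA^1$ is formal: the stabilizer of a section is always a subgroup scheme, since it is the preimage under $a$ of the section $s$, which is a monomorphism, and one checks on $T$-points that $\{g : g\cdot s = s\}$ is closed under multiplication and inversion. This requires no hypotheses beyond having a group action and a fixed point to stabilize.

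Next, for (iii), I would argue that $a : G \times \BA^1 \to \I^\circ_{Z/X}$ is faithfully flat. It is $G$-equivariant for the left translation action on the source, and over each $c \in \BA^1$ the map $a_c : G \to \pi^{-1}(c)$ is a surjective (by transitivity) morphism between smooth $k$-schemes of the same relative dimension plus $\dim G^s_c$; more precisely $a_c$ is smooth because it is a $G$-equivariant surjection onto a homogeneous space, being (after base change to an algebraic closure and a point) the quotient map $G \to G/G^s_c$. Since $a$ is a morphism of smooth $\BA^1$-schemes whose fibers over $\BA^1$ are themselves smooth, $a$ is flat by Miracle Flatness (exactly as in the proof of Lemma~\ref{deform}), and surjective, hence faithfully flat. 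Now the standard descent argument identifies $\I^\circ_{Z/X}$ with the quotient $(G\times\BA^1)/G^s$ in the fppf (equivalently smooth, since $a$ is smooth) topology: the fiber product $(G\times\BA^1)\times_{\I^\circ_{Z/X}}(G\times\BA^1)$ is identified with $(G\times\BA^1)\times_{\BA^1}G^s$ via $(g,h)\mapsto(g, g^{-1}h)$ — here one uses that $a(g) = a(h)$ iff $g^{-1}h \in G^s$ — so $a$ realizes its target as the coequalizer of the two projections, which is exactly the quotient stack's coarse incarnation as an algebraic space/scheme. Finally (i): $G^s$ is smooth over $\BA^1$ because it is cut out of $G\times\BA^1$ as the preimage of the section $s$ under the smooth map $a$ (smoothness of $a$ was just established), and smoothness is stable under base change.

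The main obstacle I anticipate is establishing the smoothness of $a$ — equivalently, checking that $a$ really is a "homogeneous space" map fiberwise in a way that survives in families over $\BA^1$, rather than just pointwise over geometric points of $\BA^1$. The subtlety is that the stabilizer $G^s_c$ could a priori jump in dimension as $c$ varies (in general the dimension of $G^s_c$ is only upper semicontinuous), which would make $a$ non-flat. This is precisely where the hypotheses bite: Lemma~\ref{deform}(i) tells us $\pi$ is \emph{smooth} of relative dimension $\dim X$ over all of $\BA^1$, so $\I^\circ_{Z/X}$ is smooth over $k$ of dimension $\dim X + 1$; the source $G \times \BA^1$ is smooth of dimension $\dim G + 1$; and $a$ is surjective on each fiber of $\pi$. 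Applying Miracle Flatness to $a$ directly (source and target both smooth over $k$, all fibers of $a$ smooth of the expected dimension $\dim G - \dim X$) sidesteps any need to analyze the jumping locus: one just needs the fibers of $a$ to be smooth of constant dimension, which follows because each is a coset space $G/G^s_c$ inside the smooth homogeneous $\pi^{-1}(c)$, and $\dim(G/G^s_c) = \dim \pi^{-1}(c) = \dim X$ is forced. Once flatness of $a$ is in hand, everything else is the routine descent bookkeeping sketched above.
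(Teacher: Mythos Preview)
Your proposal is correct and follows essentially the same route as the paper: prove (ii) formally, then establish smoothness of $a$ via Miracle Flatness (smooth source and target, fibers of constant dimension and smooth because they are torsors under the stabilizer group schemes, which are smooth in characteristic zero), deduce (i) by base-changing $a$ along $s$, and deduce (iii) by the standard descent/Cech-nerve identification once $a$ is known to be a smooth surjection. One small slip: in your final paragraph you write that each fiber of $a$ ``is a coset space $G/G^s_c$'' of dimension $\dim X$, but the fibers of $a$ are \emph{cosets} of $G^s_c$ (hence isomorphic to $G^s_c$) of dimension $\dim G - \dim X$; it is the \emph{target} $\pi^{-1}(c)$ that is the coset space $G/G^s_c$, and this is what forces $\dim G^s_c = \dim G - \dim X$ to be constant.
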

	\begin{proof}
		(ii). The indicated map $G^s \hra G \times \BA^1$ is a closed embedding because the map $s$ is a closed embedding. The `group subscheme' property follows from the definition of group action applied to $(G \times \BA^1) \acts \I^\circ_{Z/X}$. 
		
		(i). The Miracle Flatness Theorem implies that $a$ is flat, since the source and target of $a$ are smooth varieties, and Lemma~\ref{deform}(ii) ensures that the fibers have the correct dimensions. Furthermore, $a$ is finitely presented, and its fibers are smooth because they are isomorphic to stabilizer group schemes, and group schemes are smooth in characteristic zero. Therefore $a$ is smooth. Since $G^s \to \BA^1$ is obtained from $a$ by base change, it is smooth as well. 		
		
		(iii). The following statement is well-known in algebraic geometry: \begin{itemize}
			\item Let $S$ be a scheme, and work in the category of sheaves of groupoids on the category of affine schemes over $S$ (denoted $\on{Sch}^{\on{aff}}_{/S}$) with respect to some Grothendieck topology. Let $\mc{G}$ be a group object, and let $\mc{G} \acts \mc{I}$ be a left action on some other object $\mc{I}$. Let $s : S \to \mc{I}$ be a section, let $a : \mc{G} \to \mc{I}$ be the action map, and let $\mc{G}^s$ be the stabilizer subgroup of $s$ (defined as above). If $a$ is a cover, then $\mc{G} / \mc{G}^s \simeq \mc{I}$. 
		\end{itemize}
		This implies (iii) by taking $S = \BA^1$, working in the smooth topology, and taking $\mc{G} = G \times \BA^1$ and $\mc{I} = \mc{I}^\circ_{Z/X}$. Our proof of (i) shows that $a$ is a smooth cover, so the statement applies. 
		
		For sake of completeness, we prove the statement in the bullet point. Since $a$ is a cover, the geometric realization of the Cech nerve of $a$ is isomorphic to $\mc{I}$. The Cech nerve of $a$ is the simplicial object 
		\begin{cd}
			\cdots \ar[r, shift left = 2] \ar[r, shift left = -2] \ar[r] & \mc{G} \underset{\mc{I}}{\times} \mc{G} \ar[r, shift left = 1] \ar[r, shift right = 1] & \mc{G}
		\end{cd}
		It suffices to show that this is isomorphic to the simplicial object which encodes the right action of $\mc{G}^s$ on $\mc{G}$, shown below: 
		\begin{cd}
			\cdots \ar[r, shift left = 2] \ar[r, shift left = -2] \ar[r] & \mc{G} \times \mc{G}^s \ar[r, shift left = 1] \ar[r, shift right = 1] & \mc{G}
		\end{cd}
		At the level of objects, this isomorphism is defined by the maps 
		\[
		\mc{G} \times \mc{G}^s \times \cdots \times \mc{G}^s \to \mc{G} \underset{\mc{I}}{\times} \mc{G} \underset{\mc{I}}{\times} \cdots \underset{\mc{I}}{\times} \mc{G}
		\]
		given by the formula 
		\[
		(g_1, g_2, \ldots, g_n) \mapsto (g_1, g_1g_2, \ldots, g_1 g_2 \cdots g_n),  
		\]
		where $g_1$ is a $T$-point of $\mc{G}$ and $g_2,\ldots, g_n$ are $T$-points of $\mc{G}^s$, for any test object $T \in \on{Sch}^{\on{aff}}_{/S}$. It is straightforward to make these maps compatible with the maps in the simplicial objects. 
	\end{proof}
	
	\begin{rmk} \label{rmk-orbits} 
		For $c \in \BA^1 \setminus \{0\}$, we have $G / G^{s(c)} \simeq U$ where $G^{s(c)}$ means the stabilizer of $s(c) \in U$. For $0 \in \BA^1$, we similarly have $G / G^{s(0)} \simeq \mc{N}^{\circ}_{Z/X}$. The preceding lemma smoothly interpolates between these two cases, since $G^s|_c \simeq G^{s(c)}$.  
	\end{rmk}
	
	In~\ref{I}(\ref{equiv1}), replacing $\I^\circ_{Z/X}$ by $(G \times \BA^1) / G^s$ yields a preliminary result:  
	\[
	\QCoh^G(X) \simeq \QCoh^{\gm}(\BA^1 / G^s). 
	\]
	The category $\QCoh(\BA^1 / G^s)$ is just the category of sheaves on $\BA^1$ equipped with an action by the group scheme $G^s$. The problem is that the action $\BA^1 / G^s \lacts \gm$ is inexplicit. It does \emph{not} arise from an action $G^s \lacts \gm$ via automorphisms of $G^s$ as a group scheme over $\BA^1$. We will rectify this problem in~\ref{slicing-gm}. 
	
	\subsection{Descent along a degree-\texorpdfstring{$n$}{n} cover} \label{descent} 
	
	Let $\BA^1_h$ be the affine line with coordinate $h$, let $n$ be a positive integer, and consider the degree-$n$ map 
	\[
	    \tau_n : \BA^1_{h^{1/n}} \to \BA^1_h
	\]
	defined by $c \mapsto c^n$.\footnote{We will use the notations $\BA^1_{h^{1/n}}$ and $\BA^1_h$ to distinguish between the domain and target of the map $\tau_n$. Of course, both are abstractly isomorphic to $\BA^1$.} Consider the standard $\gm$-actions on the domain and target. The map $\tau_n$ intertwines these actions, up to the degree-$n$ group homomorphism $\gm \xrightarrow{p_n} \gm$. The goal of this subsection is to relate the categories $\QCoh^{\gm}_{\tf}(\BA^1_h)$ and $\QCoh^{\gm}_{\tf}(\BA^1_{h^{1/n}})$.
	
	\begin{lem} \label{lem-sub} 
		Consider the commutative diagram of stacks 
		\begin{cd}
			\pt / \gm \ar[d] \ar[r, "p_n"] & \pt/\gm \ar[d] \\
			\BA^1_{h^{1/n}} / \gm \ar[r, "\tau_n"] & \BA^1_h / \gm
		\end{cd}
		where the vertical maps go to $0 \in \BA^1$. The induced diagram of categories
		\begin{cd}
			\QCoh^{\gm}_{\tf}(\BA^1_h) \ar[r, "\tau_n^*"] \ar[d] & \QCoh_{\tf}^{\gm}(\BA^1_{h^{1/n}}) \ar[d] \\
			\QCoh^{\gm}(\pt) \ar[r, "p_n^*"] & \QCoh^{\gm}(\pt)
		\end{cd}
		is Cartesian. Equivalently, $\tau_n^*$ is fully faithful, and its essential image is the subcategory 
		\[
			\QCoh^{\gm}_{\tf, (n)}(\BA^1_{h^{1/n}}) \subset \QCoh_{\tf}^{\gm}(\BA^1_{h^{1/n}})
		\]
		consisting of $\gm$-equivariant sheaves $\mc{F}$ for which the weights of the action $\gm \acts \mc{F}|_0$ are all multiples of $n$.  
	\end{lem}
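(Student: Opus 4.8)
The plan is to pass to the Artin--Rees description of Proposition~\ref{ar} and reduce the statement to an elementary computation with filtered vector spaces. Both $\QCoh^{\gm}_{\tf}(\BA^1_h)$ and $\QCoh^{\gm}_{\tf}(\BA^1_{h^{1/n}})$ are categories of graded torsion-free modules, over $k[h]$ with $\deg h = 1$ and over $k[y] := k[h^{1/n}]$ with $\deg y = 1$; via $M \leftrightarrow (V, F)$ with $V = \colim_a M_a$ and $F^{\ge a}V = M_a$, each is identified with $\Vect^{\mathrm{filt}}$. On coordinate rings $\tau_n$ is $h \mapsto y^n$, and the fact that it is $\gm$-equivariant only up to $p_n$ is encoded by the rule that a weight-$a$ element of $M$ becomes a weight-$na$ element of $\tau_n^* M = M \otimes_{k[h]} k[y]$, while $y$ keeps weight $1$.

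The first step is to compute $\tau_n^*$ in these terms. Because $k[y]$ is free over $k[h]$ on $1, y, \dots, y^{n-1}$, base change is exact, so torsion-freeness is preserved, and a short index bookkeeping with the grading convention above gives $(\tau_n^* M)_j \cong M_{\lfloor j/n\rfloor}$. Translating back, $\tau_n^* M \leftrightarrow (V, G)$ where $G^{\ge j}V = F^{\ge \lfloor j/n\rfloor}V$: the filtration $G$ is the index-$n$ rescaling of $F$, constant on each block $\{nk, nk+1, \dots, nk+n-1\}$, with $\gr^{nk}_G V = \gr^k_F V$ and $\gr^j_G V = 0$ otherwise. Exhaustiveness is plainly preserved.

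The conclusion then follows formally. The functor $(V, F) \mapsto (V, G)$ on $\Vect^{\mathrm{filt}}$ is fully faithful, since $F$ and $G$ determine one another ($F^{\ge a}V = G^{\ge na}V$) and a linear map respects one pair of filtrations iff it respects the other; its essential image is exactly the $(V, G)$ whose associated graded is concentrated in degrees divisible by $n$. To recognize this as the claimed subcategory, note that for $\mc{F} \leftrightarrow (V, G)$ with graded module $N$, the restriction $\mc{F}|_0$ (pullback along $\pt/\gm \to \BA^1_{h^{1/n}}/\gm$) is $N/yN$, whose weight-$j$ component is $\gr^j_G V$; hence the weights of $\gm \acts \mc{F}|_0$ all lie in $n\BZ$ precisely when $\mc{F}$ is in the image of $\tau_n^*$, identifying the image with $\QCoh^{\gm}_{\tf,(n)}(\BA^1_{h^{1/n}})$. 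Finally, since $p_n^* : \QCoh^{\gm}(\pt) \to \QCoh^{\gm}(\pt)$ is itself fully faithful with essential image the representations whose weights lie in $n\BZ$, the displayed fiber product of categories is equivalent to the full subcategory of $\QCoh^{\gm}_{\tf}(\BA^1_{h^{1/n}})$ on objects whose restriction to $0$ has weights in $n\BZ$, and $\tau_n^*$ is the comparison functor; so the ``Cartesian'' formulation and the ``image of $\tau_n^*$'' formulation agree.

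A geometric sanity check: $\tau_n$ is an isomorphism over the open substack $(\BA^1 \setminus 0)/\gm \simeq \pt$ and restricts over the closed point to $\pt/\gm \xrightarrow{p_n} \pt/\gm$, which is why nothing happens generically while only weights mod $n$ are retained at the special point. The single delicate point in the argument is the bookkeeping in the first step --- keeping the grading on $\tau_n^* M$ straight given that $\tau_n$ intertwines the two $\gm$-actions only up to $p_n$; everything else is routine.
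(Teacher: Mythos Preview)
Your proof is correct and follows exactly the paper's approach: translate via Artin--Rees (Proposition~\ref{ar}) to filtered vector spaces, compute that $\tau_n^*$ rescales the filtration index by $n$, and conclude that the essential image is characterized by the associated graded living in degrees divisible by $n$. Your explicit formula $G^{\ge j}V = F^{\ge \lfloor j/n\rfloor}V$ and your check that the Cartesian and essential-image formulations coincide are more detailed than the paper's terse sketch, but the substance is identical.
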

	\begin{proof}
		We apply Proposition~\ref{ar} to the above diagram of categories. Under this reformulation, the map $\tau_n^*$ sends a filtered vector space $(V, F)$ to $(V, F')$ where $F'$ is the filtration defined by $(F')^{\ge m} V = F^{\ge nm} V$. The map $p_n^*$ sends a graded vector space $\oplus_i V_i$ to the same vector space but with the degrees multiplied by $n$. The vertical maps send a filtered vector space to its associated graded. 
		
		Now the claim follows because a filtered vector space is of the form $(V, F')$ if and only if its associated graded has nonzero components only in degrees which are multiples of $n$. 
	\end{proof}
	
	\begin{rmk}
		The restriction to torsion-free sheaves is necessary. An example of a torsion sheaf on $\BA^1_{h^{1/n}}$ which satisfies the condition that the weights of the $\gm$-action on the zero fiber are multiples of $n$ but which does not lie in the essential image of $\tau_n^*$ is a skyscraper sheaf at $0 \in \BA^1_{h^{1/n}}$ with $\gm$-weight 0. 
	\end{rmk}
	
	Since the functors $\tau_n^*$ and $p_n^*$, one can immediately bootstrap Lemma~\ref{lem-sub} from a statement about sheaves on $\BA^1_h / \gm$ to a statement about stacks (and sheaves on stacks) over $\BA^1_h / \gm$, by the reasoning in Remark~\ref{rmk-monoidal}. This yields the following: 
	
	\begin{prop} \label{cor-cover} 
		Let $\mc{Y}$ be a stack with $\gm$-action, and let $\mc{Y} \to \BA^1_h$ be a $\gm$-equivariant map. Let $\tau_n^*\mc{Y}$ be the base change along $\tau_n : \BA^1_{h^{1/n}} \to \BA^1_h$. Then the following diagram is Cartesian and the horizontal functors are fully faithful: 
		\begin{cd}
			\QCoh^{\gm}_{\tf}(\mc{Y}) \ar[r, "\tau_n^*"] \ar[d] & \QCoh^{\gm}_{\tf}(\tau_n^*\mc{Y}) \ar[d] \\
			\QCoh^{\gm}_{\tf}(\mc{Y}|_0) \ar[r, "p_n^*"] & \QCoh^{\gm}_{\tf}((\tau_n^*\mc{Y})|_0)
		\end{cd}
	\end{prop}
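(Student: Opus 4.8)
The plan is to derive this by ``adding coefficients in $\mc{Y}$'' to Lemma~\ref{lem-sub}, in the spirit of Remark~\ref{rmk-monoidal}. The point is that Lemma~\ref{lem-sub} records strictly more than a Cartesian square of categories: all four categories are symmetric monoidal under $\otimes_{\oh}$, all four functors are symmetric monoidal, and --- viewing $\QCoh^{\gm}(\pt)$ as a module over $\QCoh^{\gm}_{\tf}(\BA^1_h/\gm)$ via restriction to $0$ --- the whole square is linear over $\QCoh^{\gm}_{\tf}(\BA^1_h/\gm)$. Given the $\gm$-equivariant map $\mc{Y}\to\BA^1_h$, I would regard $\QCoh^{\gm}_{\tf}(\mc{Y})$ as a module category over $\QCoh^{\gm}_{\tf}(\BA^1_h/\gm)$ and tensor the entire Cartesian square of Lemma~\ref{lem-sub} with it over $\QCoh^{\gm}_{\tf}(\BA^1_h/\gm)$; after identifying each resulting corner with the corresponding corner in the statement, the proposition falls out.

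Two things then need checking. First, the \emph{base change of the corners}: one must verify $\QCoh^{\gm}_{\tf}(\BA^1_{h^{1/n}}/\gm)\otimes_{\QCoh^{\gm}_{\tf}(\BA^1_h/\gm)}\QCoh^{\gm}_{\tf}(\mc{Y})\simeq\QCoh^{\gm}_{\tf}(\tau_n^*\mc{Y})$ and, similarly, that tensoring the two lower corners with $\QCoh^{\gm}_{\tf}(\mc{Y})$ produces $\QCoh^{\gm}_{\tf}(\mc{Y}|_0)$ and $\QCoh^{\gm}_{\tf}((\tau_n^*\mc{Y})|_0)$. For this I would use that $\tau_n\colon\BA^1_{h^{1/n}}/\gm\to\BA^1_h/\gm$ is representable, finite and flat --- it is the finite flat map $\BA^1_{h^{1/n}}\to\BA^1_h$ after the faithfully flat base change $\BA^1_h\to\BA^1_h/\gm$ --- so flat base change for quasicoherent sheaves applies, and flat pullback preserves torsion-freeness, so the $\tf$ decorations cause no trouble; restriction to the fibre over $0$ is likewise harmless once one restricts to flat sheaves, which is exactly the content of the subscript $\tf$. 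Second, \emph{stability of the formal properties}: tensoring the Cartesian square of Lemma~\ref{lem-sub} with the fixed module category $\QCoh^{\gm}_{\tf}(\mc{Y})$ must preserve both the Cartesian property and the full faithfulness of the horizontal functors. This holds because, by Lemma~\ref{lem-sub}, $\tau_n^*$ is a fully faithful embedding onto the subcategory cut out by a weight condition at the special fibre (weights in $n\BZ$), and such ``weight-closed'' embeddings remain fully faithful embeddings of the analogous type after applying $-\otimes_{\QCoh^{\gm}_{\tf}(\BA^1_h/\gm)}\QCoh^{\gm}_{\tf}(\mc{Y})$.

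If one prefers to avoid the $2$-categorical bookkeeping, I would instead reduce to the affine case: choose a $\gm$-equivariant flat affine hypercover of $\mc{Y}$ over $\BA^1_h$ and apply flat descent, so that it suffices to treat $\QCoh^{\gm}_{\tf}(\Spec A)$ for a $\BZ$-graded $k[h]$-algebra $A$ flat over $k[h]$ (with $\gm$ acting on $\BA^1_h$ with weight one). Under the Artin--Rees dictionary of Remark~\ref{rmk-monoidal}, $A$ corresponds to a filtered ring $R$, its torsion-free equivariant module category to $R$-modules equipped with an exhausting filtration, the functor $\tau_n^*$ to rescaling the filtration index by $n$ (applied to $R$ and to modules alike), and restriction to $0$ to passage to associated gradeds. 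In these terms the assertion reads: a filtered module over the $n$-fold-rescaled ring comes, uniquely, from a filtered $R$-module precisely when its associated graded is concentrated in degrees divisible by $n$ --- which is Lemma~\ref{lem-sub} with coefficients in $R$, proved by the same elementary computation, or deduced from Lemma~\ref{lem-sub} by applying the exact functor $-\otimes_{k[h]}A$. Once the square is known Cartesian, full faithfulness of the top horizontal functor follows formally from that of the bottom one, $p_n^*$, and $p_n^*$ is fully faithful on $\QCoh^{\gm}$ of any stack because $p_n\colon\gm\to\gm$ is faithfully flat (concretely, relabelling $\gm$-weights by multiplication by $n$ is injective).

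The only genuine obstacle, I expect, is making the identification of the corners honest --- confirming that the relative tensor product really computes $\QCoh^{\gm}_{\tf}$ of the base-changed stacks while faithfully tracking the torsion-free conditions and the ramification of $\tau_n$ over $0$. The reduction to the affine case via the Artin--Rees dictionary is the cleanest route to rigour, at the cost of producing a $\gm$-equivariant flat affine hypercover of $\mc{Y}$.
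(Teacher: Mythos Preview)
Your proposal is correct and follows essentially the same approach as the paper. The paper's own proof is a one-sentence sketch: since $\tau_n^*$ and $p_n^*$ are monoidal, Lemma~\ref{lem-sub} bootstraps from sheaves on $\BA^1_h/\gm$ to stacks over $\BA^1_h/\gm$ ``by the reasoning in Remark~\ref{rmk-monoidal}.'' Your two routes---tensoring the Cartesian square with the module category $\QCoh^{\gm}_{\tf}(\mc{Y})$, or reducing to the affine case and invoking the Artin--Rees dictionary---are exactly the two ways of unpacking that sentence, and your affine reduction is the closer match to what Remark~\ref{rmk-monoidal} actually spells out.
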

	Here the subscript `$\tf$' refers to sheaves which are flat over the base $\BA^1$, not necessarily flat over $\mc{Y}$. The diagram of categories is induced from an analogous diagram of stacks, and the symbols $\mc{Y}|_0$ and $(\tau_n^*\mc{Y})|_0$ refer to the fibers over $0 \in \BA^1_h$ and $0 \in \BA^1_{h^{1/n}}$, respectively. Note that $\mc{Y}|_0 \simeq (\tau_n^*\mc{Y})|_0$, but the natural $\gm$-actions on them differ by $p_n$, which is why we denote the bottom horizontal map by $p_n^*$. 
	
	Applying Proposition \ref{cor-cover} to $\mc{Y} = \BA^1 / G^s$ yields the following: 
	
	\begin{cor}\label{cor-cover-apply}
		We have an equivalence 
		\[
		\QCoh^{\gm}_{\tf}(\BA^1_h / G^s) \xrightarrow{\tau_n^*} \QCoh^{\gm}_{\tf, (n)}(\BA^1_{h^{1/n}} / \tau_n^*G^s), 
		\]
		where the right hand side is the full subcategory of $\QCoh^{\gm}_{\tf}(\BA^1_{h^{1/n}} / \tau_n^*G^s)$ consisting of sheaves whose pullback to the zero fiber $\pt / G^{s(0)}$ lies in the essential image of 
		\[
		\QCoh^{\gm}(\pt / G^{s(0)}) \xrightarrow{p_n^*} \QCoh^{\gm}(\pt / G^{s(0)}). 
		\]
	\end{cor}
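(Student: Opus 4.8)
The plan is simply to apply Proposition~\ref{cor-cover} to the stack $\mc{Y} := \BA^1_h / G^s$, so the work consists of (a) recording the $\gm$-equivariant structure needed to invoke that proposition and (b) identifying the four corners of the resulting Cartesian square. For (a): by Lemma~\ref{lem-gs}(iii) together with the identifications in~\ref{I} we have $\BA^1_h/G^s \simeq G \back \I^\circ_{Z/X}$, and under this identification the structure map $\mc{Y} \to \BA^1_h$ is obtained by passing to $G$-quotients in the projection $\pi : \I^\circ_{Z/X} \to \BA^1_h$. By Lemma~\ref{deform}(i) the map $\pi$ is $G \times \gm$-equivariant with $\gm$ acting on $\BA^1_h$ standardly, so after quotienting by $G$ we obtain a $\gm$-equivariant map $\mc{Y} \to \BA^1_h$ (standard action on the target), in which the $\gm$-action on $\mc{Y}$ is precisely the one implicit in the equivalence $\QCoh^G(X) \simeq \QCoh^{\gm}(\BA^1_h/G^s)$ from the end of~\ref{slice-line}. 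Thus $\mc{Y}$ satisfies the hypotheses of Proposition~\ref{cor-cover}.

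For (b): since $\QCoh(\BA^1_h/G^s)$ is the category of quasicoherent sheaves on $\BA^1_h$ equipped with a $G^s$-action (as noted in~\ref{slice-line}), the stack $\BA^1_h/G^s \to \BA^1_h$ is the classifying stack $B_{\BA^1_h}G^s$ of the $\BA^1_h$-group scheme $G^s$; forming this classifying stack commutes with base change, so $\tau_n^*\mc{Y} \simeq \BA^1_{h^{1/n}}/\tau_n^*G^s$. The fiber $\mc{Y}|_0$ over $0 \in \BA^1_h$ is $\pt/(G^s|_0) \simeq \pt/G^{s(0)}$ by Remark~\ref{rmk-orbits}; and since $\tau_n(0) = 0$ we have $(\tau_n^*G^s)|_0 = G^s|_0 = G^{s(0)}$, so $(\tau_n^*\mc{Y})|_0 \simeq \pt/G^{s(0)}$ as well, the two copies differing only in their $\gm$-action (by $p_n$), which is exactly why the bottom arrow of the square produced by Proposition~\ref{cor-cover} is denoted $p_n^*$.

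With these identifications in hand, Proposition~\ref{cor-cover} furnishes a Cartesian square
\begin{cd}
\QCoh^{\gm}_{\tf}(\BA^1_h/G^s) \ar[r, "\tau_n^*"] \ar[d] & \QCoh^{\gm}_{\tf}(\BA^1_{h^{1/n}}/\tau_n^* G^s) \ar[d] \\
\QCoh^{\gm}(\pt/G^{s(0)}) \ar[r, "p_n^*"] & \QCoh^{\gm}(\pt/G^{s(0)})
\end{cd}
whose horizontal functors are fully faithful. For a Cartesian square of categories with fully faithful horizontal arrows it is formal that the top functor $\tau_n^*$ is an equivalence onto the full subcategory of $\QCoh^{\gm}_{\tf}(\BA^1_{h^{1/n}}/\tau_n^* G^s)$ consisting of those $\mc{F}$ whose image under the right vertical functor — i.e.\ the pullback $\mc{F}|_{\pt/G^{s(0)}}$ to the zero fiber — lies in the essential image of $p_n^*$. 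This full subcategory is precisely the category denoted $\QCoh^{\gm}_{\tf,(n)}(\BA^1_{h^{1/n}}/\tau_n^* G^s)$ in the statement, so the corollary follows.

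The argument is essentially a translation of Proposition~\ref{cor-cover}; the only points requiring genuine care are the bookkeeping in the first two paragraphs — verifying that the (inexplicit) $\gm$-action on $\BA^1_h/G^s$ really does make the map to $\BA^1_h$ equivariant in the required sense, and that base change along the non-$\gm$-equivariant map $\tau_n$ produces $\BA^1_{h^{1/n}}/\tau_n^*G^s$ equipped with the $\gm$-action that Proposition~\ref{cor-cover} expects (the one twisted by $p_n$). I expect this to be the main, though fairly mild, obstacle.
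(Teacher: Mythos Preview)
Your proposal is correct and follows the paper's approach exactly: the paper's proof is the single sentence ``Applying Proposition~\ref{cor-cover} to $\mc{Y} = \BA^1 / G^s$ yields the following,'' and your (a) and (b) simply spell out the bookkeeping that justifies this application. The extra care you take in verifying the $\gm$-equivariance of $\mc{Y} \to \BA^1_h$ via Lemma~\ref{deform}(i) and in identifying the fibers via Remark~\ref{rmk-orbits} is entirely appropriate and matches what the paper leaves implicit.
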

	
	Again, note that the two $\gm$-actions on $\pt / G^{s(0)}$ differ by $p_n$. 
	
	\subsection{A \texorpdfstring{$\gm$}{gm}-action on \texorpdfstring{$G^s$}{Gs}} \label{slicing-gm}
	
	From this point onward, we use the cocharacter $\gamma : \gm \to G$ included in the fastening datum $(\gamma, \ell)$. This cocharacter will allow us to resolve the issue mentioned at the end of~\ref{slice-line}. 
	
	The conditions of Definition~\ref{def:fasteningdatum} imply that there is a unique \emph{nontrivial} action $\gm \acts \BA^1$ for which the map $\BA^1 \xrightarrow{\ell} X$ is $\gm$-equivariant, where the action $\gm \acts X$ occurs via $\gamma$. Indeed, $\ell$ induces an isomorphism between $\BA^1$ and the locally closed subscheme $\ell(\BA^1) \subset X$, and the latter is invariant under the action of $\gamma(\gm)$. 
	
	Furthermore, Definition~\ref{def:fasteningdatum}(1) implies that this action $\gm \acts \BA^1$ fixes $0 \in \BA^1$, so this action is linear. Thus it is given by $t \cdot h = t^n h$ for some nonzero integer $n$. Replacing $\gamma$ with $\gamma^{-1}$ if necessary, we can assume that $n > 0$. We specialize the general constructions of~\ref{descent} to the case of this particular $n$. 
	
	Define a $\gm$-action on $G \times \BA^1_{h^{1/n}}$ by the formula
	\[
		(g,x) \cdot t= (\gamma(t)g\gamma(t)^{-1}, t x ) \qquad t \in \gm, g \in G, x \in \BA^1_{h^{1/n}}. 
	\]
	The compatibility between $\gamma$ and $\ell$ implies that the group subscheme $\tau_n^*G^s \subset G \times \BA^1_{h^{1/n}}$ is invariant under this action. Hence, we obtain an action
	\begin{equation}\label{eq-act} 
	\tau_n^*G^s \lacts \gm. 
	\end{equation}

	\begin{lem}\label{lem-crux}
		The action $\BA^1_{h^{1/n}} / \tau_n^*G^s \lacts \gm$ induced from (\ref{eq-act}) agrees with the pullback of the action $\BA^1_h / G^s \lacts \gm$ along the map $\tau_n$. 
	\end{lem}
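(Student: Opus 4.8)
The plan is to realize both $\gm$-actions on $\BA^1_{h^{1/n}}/\tau_n^*G^s$ as descents, along $G\backslash(-)$, of $\gm$-actions on the $G$-torsor $\tau_n^*\I^\circ_{Z/X}$ lying over it. After base change along $\tau_n$, Lemma~\ref{lem-gs}(iii) identifies $\tau_n^*\I^\circ_{Z/X}$ with $(G\times\BA^1_{h^{1/n}})/\tau_n^*G^s$ via the action map $a(g,x)=g\cdot\sigma(x)$, where $\sigma:=\tau_n^*s$ is the pulled-back section (so $\sigma(x)=s(x^n)$), and dividing by the left $G$-action gives $\BA^1_{h^{1/n}}/\tau_n^*G^s\simeq G\backslash\tau_n^*\I^\circ_{Z/X}$. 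Carrying this out turns the lemma into a comparison of two $\gm$-actions on $\tau_n^*\I^\circ_{Z/X}$ modulo $G$.

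The geometric input, which I would prove first, is an equivariance property of $s$. By the definition of $n$ in~\ref{slicing-gm}, $\ell$ intertwines the action $t\cdot h=t^nh$ on $\BA^1_h$ with the action of $\gamma$ on $X$, so $\gamma(t)\cdot\ell(h)=\ell(t^nh)$; and by the construction of $s$ from $\ell$ in~\ref{slice-line}, the point $s(h)$ lies in the fiber of the line bundle $\I_{Z/X}\to X$ over $\ell(h)$. Since the $G$-action on $\I^\circ_{Z/X}$ covers that on $X$, both $\gamma(t)\cdot s(h)$ and $s(t^nh)$ lie in the (punctured, hence $\gm$-torsor) fiber over $\ell(t^nh)$, and so differ by a unique dilation: $\gamma(t)\cdot s(h)=s(t^nh)\cdot\lambda(t,h)$. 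Applying $\pi$, which by Lemma~\ref{deform}(i) is $G$-invariant on $\BA^1_h$ and is scaled by dilations, yields $h=\lambda(t,h)\cdot t^nh$ on the dense open $\{h\neq0\}$, so $\lambda\equiv t^{-n}$; hence $\gamma(t)\cdot s(h)\cdot t^n=s(t^nh)$, and after pullback $\gamma(t)\cdot\sigma(x)\cdot t^n=\sigma(tx)$.

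I would then put two $\gm$-actions on $\tau_n^*\I^\circ_{Z/X}=\I^\circ_{Z/X}\times_{\BA^1_h}\BA^1_{h^{1/n}}$: the action $\rho_1$ given by $(v,x)\cdot t=(v\cdot t^n,tx)$, which is the pullback of the dilation action along $\tau_n$ and whose descent along $G\backslash(-)$ is, by construction together with Lemma~\ref{lem-gs}(iii), precisely the $\tau_n$-pullback of the action $\BA^1_h/G^s\lacts\gm$ from the end of~\ref{slice-line}; and the action $\rho_2$ given by $(v,x)\cdot t=(\gamma(t)\cdot v\cdot t^n,tx)$, which is genuinely an action because $\gamma$ is a homomorphism and the $G$-action commutes with dilation. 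The equivariance of $\sigma$ from the previous paragraph shows, upon expanding $a(\gamma(t)g\gamma(t)^{-1},tx)=\gamma(t)g\gamma(t)^{-1}\cdot\sigma(tx)$ and substituting $\sigma(tx)=\gamma(t)\cdot\sigma(x)\cdot t^n$, that $a$ intertwines the action~(\ref{eq-act}) on the source with $\rho_2$ on the target; since $\tau_n^*G^s$ is exactly the stabilizer of $\sigma$, the identification $\tau_n^*\I^\circ_{Z/X}\simeq(G\times\BA^1_{h^{1/n}})/\tau_n^*G^s$ is thereby $\gm$-equivariant for $\rho_2$ and for the action induced from~(\ref{eq-act}). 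Finally $\rho_2(t)=\bigl(\gamma(t)\cdot(-)\bigr)\circ\rho_1(t)$ with $\gamma(t)\in G$, so $\rho_1$ and $\rho_2$ descend to the same action on $G\backslash\tau_n^*\I^\circ_{Z/X}\simeq\BA^1_{h^{1/n}}/\tau_n^*G^s$; chaining the identifications gives the lemma. The one genuinely geometric step is the equivariance of $s$ in the second paragraph — in particular pinning the dilation scalar to $t^{-n}$ — and that is where I expect the real work to be; the rest is formal bookkeeping with group actions on quotient stacks, which I would phrase in the smooth topology as in Lemma~\ref{lem-gs}.
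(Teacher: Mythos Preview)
Your proposal is correct and follows essentially the same route as the paper: both arguments pass to the $G$-torsor $\tau_n^*\I^\circ_{Z/X}$, use the identity $\gamma(t)\cdot s(h)\cdot t^n = s(t^n h)$ (which the paper checks by a direct calculation on the dense locus $h\neq 0$, while you extract it by applying $\pi$), and then observe that two $\gm$-actions differing by left multiplication by $\gamma(t)\in G$ coincide after quotienting by $G$. The only cosmetic difference is that the paper packages this via the auxiliary action $(g,c)\cdot t=(g\,\gamma^{-1}(t),tc)$ on $G\times\BA^1_{h^{1/n}}$ and then reads off the induced action on the stabilizer scheme, whereas you introduce the pair $\rho_1,\rho_2$ on $\tau_n^*\I^\circ_{Z/X}$ and compare them directly; these are two sides of the same equivariance of $a'$.
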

	\begin{proof}
		Let us start with the action $\BA^1_{h^{1/n}} / \tau_n^*G^s \lacts \gm$ obtained by pullback along $\tau_n$. We will define a $\gm$-equivariant map 
		\[
		\BA^1_{h^{1/n}} \xrightarrow{\phi} \BA^1_{h^{1/n}} / \tau_n^*G^s
		\]
		of stacks over $\BA^1_{h^{1/n}}$ such that the resulting $\gm$-action on the relative stabilizer scheme obtained by the fibered product 
		\begin{cd}
			\tau_n^*G^s \ar[r] \ar[d] & \BA^1_{h^{1/n}} \ar[d, "\phi"] \\
			\BA^1_{h^{1/n}} \ar[r, "\phi"] & \BA^1_{h^{1/n}} / \tau_n^* G^s
		\end{cd}
		coincides with the action of~(\ref{eq-act}). This proves the lemma. 
		
		Recall from~\ref{slice-line} that the action $\BA^1_h / G^s \lacts \gm$ was defined using the isomorphism 
		\[ 
		G \back \I^\circ_{Z/X} \simeq \BA^1_h / G^s 
		\]
		and the action $\I^\circ_{Z/X} \lacts \gm$ which scales the fibers of the line bundle. We display these actions in the following diagram: 
		\begin{cd}
			G \ar[draw=none, shift right = 1]{r}{\acts} \ar[d, dash, shift left = 0.5] \ar[d, dash, shift right = 0.5] & G \times \BA^1_h \ar[d, swap, "a"] \\
			G \ar[draw=none, shift right = 1]{r}{\acts} & \mc{I}^\circ_{Z/X} \ar[draw=none, shift right = 1]{r}{\lacts} & \gm
		\end{cd}
		Consider the base change along $\tau_n$: 
		\begin{cd}
			G \ar[draw=none, shift right = 1]{r}{\acts} \ar[d, dash, shift left = 0.5] \ar[d, dash, shift right = 0.5] & G \times \BA^1_{h^{1/n}} \ar[d, swap, "a'"] &  \\
			G \ar[draw=none, shift right = 1]{r}{\acts} & \mc{I}^\circ_{Z/X} \underset{\BA^1_h}{\times} \BA^1_{h^{1/n}} \ar[draw=none, shift right = 1]{r}{\lacts} & \gm
		\end{cd}
		Here, $\gm$ acts on $\I^\circ_{Z/X}$ via the $n$-th power of the old action, and it acts on $\BA^1_{h^{1/n}}$ via the standard action. 
		
		We define an action $(G \times \BA^1_{h^{1/n}}) \lacts \gm$ via the following formula: 
		\begin{equation}\label{eq-act2} 
		(g, c) \cdot t = (g \cdot \gamma^{-1}(t), tc). 
		\end{equation}
		We claim that this action ensures that $a'$ is $\gm$-equivariant. Since the $\gm$-actions take place on irreducible varieties, it suffices to check this agreement on a dense open subset of the domain. Therefore, we can restrict to the locus $G \times (\BA^1_{h^{1/n}} \setminus \{0\})$ and use the isomorphism $\I^\circ_{Z/X}|_{\BA^1_h \setminus \{0\}} \simeq U \times (\BA^1_h \setminus \{0\})$ which was noted in Remark~\ref{rmk-I-geo}. With this identification in hand, the desired commutativity follows from a direct calculation: 
		\begin{cd}
			(g, c) \ar[rr, mapsto, "\act_t"] \ar[d, mapsto, "a'"] & & (g \cdot \gamma^{-1}(t), tc) \ar[d, mapsto, "a'"] \\
			\big((g \cdot \ell(c^n), c^n), c\big) \ar[r, mapsto, "\act_t"] & \big((g \cdot \ell(c^n), t^nc^n), tc\big) \ar[r, shift left = 0.5, dash] \ar[r, shift right = 0.1, dash] & \big((g \cdot \gamma^{-1}(t)\cdot \ell(t^nc^n), t^nc^n), tc\big)
		\end{cd}
		for $g \in G,\ c \in \BA^1_{h^{1/n}} \setminus \{0\}, \ t \in \gm$. 
		Thus, we get a diagram 
		\begin{cd}
			G \ar[draw=none, shift right = 1]{r}{\acts} \ar[d, dash, shift left = 0.5] \ar[d, dash, shift right = 0.5]& G \times \BA^1_{h^{1/n}} \ar[d, swap, "a'"] \ar[draw=none, shift right = 1]{r}{\lacts}  & \gm \ar[d, dash, shift left = 0.5] \ar[d, dash, shift right = 0.5] \\
			G \ar[draw=none, shift right = 1]{r}{\acts} & \mc{I}^\circ_{Z/X} \underset{\BA^1_h}{\times} \BA^1_{h^{1/n}} \ar[draw=none, shift right = 1]{r}{\lacts} & \gm
		\end{cd}
		Upon passing to $G$-quotients, the map $a'$ becomes the desired map $\phi$. 
		
		\begin{rmk*}
			Let us summarize the previous construction in more conceptual language. The map $\phi$ corresponds to the choice of a $\gm$-equivariant $\tau_n^*G^s$-torsor on $\BA^1_{h^{1/n}}$. From this point of view, the original map $\BA^1_h \to \BA^1_h / G^s$ corresponds to the (trivial) $G^s$-torsor on $\BA^1_h$ obtained by taking the pullback of $a$ along $s$. Unfortunately, this $G^s$-torsor cannot be made $\gm$-equivariant over $\BA^1_h$. However, once we pull everything back along $\tau_n$, the resulting (trivial) $\tau_n^*G^s$-torsor does admit a structure of $\gm$-equivariance, which yields $\phi$. 
		\end{rmk*}
		
		It remains to check that the action $\tau_n^*G^s \lacts \gm$ coincides with the action of~(\ref{eq-act}). We have a fibered diagram\footnote{This diagram is the $\tau_n$-pullback of part of the (augmented) simplicial object in Lemma~\ref{lem-gs}(iii) which, to use the terminology in that lemma, encodes the right action of $\mc{G}^s$ on $\mc{G}$.}  
		\begin{cd}
			G \times \tau_n^*G^s \ar[r, "\id_G \times \pi'"] \ar[d, swap, "{(\on{mult}, \pi')}"] & G \times \BA^1_{h^{1/n}} \ar[d, "a'"] \\
			G \times \BA^1_{h^{1/n}} \ar[r, "a'"] & \I^\circ_{Z/X} \underset{\BA^1_h}{\times} \BA^1_{h^{1/n}}
		\end{cd}
		Here $\pi'$ is the projection to the base $\BA^1_{h^{1/n}}$ and $\on{mult}$ is the group multiplication. The $\gm$-actions on the three lower factors collectively induce a $\gm$-action on the upper-left factor, which one checks is given by the formula 
		\[
		\big(g, (g', c)\big) \cdot t = \big(g\, \gamma^{-1}(t), (\gamma(t)\, g' \, \gamma^{-1}(t), tc)\big)
		\]
		for $g \in G,\ c \in \BA^1_{h^{1/n}},\ g' \in \tau_n^*G^s|_{c} \simeq G^{s(c^n)},\ t \in \gm$. On the upper-left factor, passing to the $G$-quotient amounts to forgetting the first coordinate, and the resulting $\gm$-action is exactly that of~(\ref{eq-act}). 
	\end{proof}
	
	\subsection{Applying the criterion for descent}
	
	Let $\phi : \BA^1_{h^{1/n}} \to \BA^1_{h^{1/n}} / \tau_n^* G^s$ be the $\gm$-equivariant map constructed in the proof of Lemma~\ref{lem-crux}. Given a quasicoherent sheaf $\mc{F} \in \QCoh^{\gm}(\BA^1_{h^{1/n}} / \tau_n^*G^s)$, the pullback $\phi^*\mc{F}$ is a sheaf on $\BA^1_{h^{1/n}}$ which is equivariant with respect to $\gm$ and the group scheme $\tau_n^*G^s$. These two actions are intertwined (in an obvious manner) with the action $\tau_n^*G^s \lacts \gm$ defined in~\ref{slicing-gm}(\ref{eq-act}).\footnote{One could formulate this as an action of the semidirect product $\tau_n^*G^s \rtimes \gm$ on the sheaf $\phi^*\mc{F}$ on $\BA^1_{h^{1/n}}$.} 
	
	We can now characterize the subcategory $\QCoh^{\gm}_{\tf, (n)} (\BA^1_{h^{1/n}} / \tau_n^*G^s)$ in concrete terms: 
	
	\begin{cor}\label{cor-n} 
		\begin{enumerate}[label=(\roman*)]
			\item[] 
			\item Let $G^{s(0)} \rtimes \gm$ be defined using the $\gamma$-conjugation action of \ref{slicing-gm}(\ref{eq-act}). The image of the group homomorphism $G^{s(0)} \rtimes \gm \xrightarrow{(\iota, \gamma)} G$ is $G^{\ell(0)}$. 
			\item The subcategory $\QCoh^{\gm}_{\tf, (n)}(\BA^1_{h^{1/n}} / \tau_n^*G^s)$ defined in Corollary~\ref{cor-cover-apply} consists of quasicoherent sheaves $\mc{F}$ satisfying the following condition: 
			\begin{itemize}
				\item The action $G^{s(0)} \rtimes \gm \acts \mc{F}|_0$ factors through the map $(\iota, \gamma)$ in (i). 
			\end{itemize}
		\end{enumerate}
	\end{cor}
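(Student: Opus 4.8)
The plan is to prove (i) directly from the action of $G^{\ell(0)}$ on the normal line, and then to deduce (ii) from (i) together with Corollary~\ref{cor-cover-apply}.

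For (i), write $\chi\colon G^{\ell(0)}\to\gm$ for the character by which $G^{\ell(0)}$ acts on the one-dimensional fiber $\mc{N}_{Z/X}|_{\ell(0)}$ (Remark~\ref{rmk-fastened-normal}). First I would note that the point $s(0)\in\I^\circ_{Z/X}|_0\simeq\mc{N}^\circ_{Z/X}$ lies over $\ell(0)\in Z$, which is immediate from the description of $s$ in~\ref{slice-line} as the proper transform of $(\ell,\id_{\BA^1})$; hence $G^{s(0)}\subseteq G^{\ell(0)}$, and as $s(0)$ is a nonzero vector on the line on which $G^{\ell(0)}$ acts through $\chi$, in fact $G^{s(0)}=\ker\chi$. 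Next, $\gamma(\gm)$ fixes $\ell(0)$ (the unique point of $\ell(\BA^1)$ lying outside the open $\gamma(\gm)$-orbit), so $\gamma$ factors through $G^{\ell(0)}$; and passing to tangent spaces as in the converse half of the proof of Proposition~\ref{prop:fasteningdata} identifies $\mc{N}_{Z/X}|_{\ell(0)}$ $\gamma(\gm)$-equivariantly with $\mc{T}_0\BA^1$, on which $\gm$ acts through the $n$-th power map (the $n$ of $\tau_n$). Thus $\chi\circ\gamma=(t\mapsto t^n)$ is surjective. Since the semidirect product is built with the $\gamma$-conjugation action, $(\iota,\gamma)\colon(g,t)\mapsto g\gamma(t)$ is a group homomorphism, with image $\ker(\chi)\cdot\gamma(\gm)\subseteq G^{\ell(0)}$; this subgroup contains $\ker\chi$ and surjects onto $G^{\ell(0)}/\ker\chi\cong\gm$, hence equals $G^{\ell(0)}$.

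For (ii), Corollary~\ref{cor-cover-apply} says that $\mc{F}$ lies in $\QCoh^{\gm}_{\tf,(n)}(\BA^1_{h^{1/n}}/\tau_n^*G^s)$ exactly when $\mc{F}|_0$, regarded as an object of $\QCoh^{\gm}(\pt/G^{s(0)})$, lies in the essential image of $p_n^*\colon\QCoh^{\gm}(\pt/G^{s(0)})_\alpha\to\QCoh^{\gm}(\pt/G^{s(0)})_c$, where $c$ is the $\gamma$-conjugation action of~\ref{slicing-gm}(\ref{eq-act}) and $\alpha$ is the action inherited from the fiber-scaling $\gm$-action on $\I^\circ_{Z/X}$ (the two differ by $p_n$). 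I would then make three identifications. First, $\QCoh^{\gm}(\pt/G^{s(0)})_c\simeq\Rep(G^{s(0)}\rtimes\gm)$, tautologically, since $c$ is the conjugation action defining the semidirect product. Second, $\QCoh^{\gm}(\pt/G^{s(0)})_\alpha\simeq\Rep(G^{\ell(0)})$: fastenedness presents $\mc{N}^\circ_{Z/X}$ as the $G$-equivariant $\gm$-torsor $G\times_{G^{\ell(0)}}(\mc{N}^\circ_{Z/X}|_{\ell(0)})$ over $Z$, so $\pt/G^{s(0)}\simeq\mc{N}^\circ_{Z/X}/G\simeq[\mc{N}^\circ_{Z/X}|_{\ell(0)}/G^{\ell(0)}]$ with $G^{\ell(0)}$ acting on the one-dimensional punctured line via $\chi$ and $\alpha$ acting by scaling; the $\gm$-quotient collapses this line, giving $Z/G\simeq\pt/G^{\ell(0)}$ and hence the identification. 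Third — the technical heart — under these identifications $p_n^*$ becomes restriction along $(\iota,\gamma)\colon G^{s(0)}\rtimes\gm\to G^{\ell(0)}$. In the model $[\mc{N}^\circ_{Z/X}|_{\ell(0)}/G^{\ell(0)}]$ both $\gm$-actions become explicit ($t$ scales the line for $\alpha$, and $t$ acts as $\gamma(t)$ for $c$, which is consistent because $\chi\circ\gamma$ is the $n$-th power map); $p_n^*$ is pullback along the identity self-map of $[\mc{N}^\circ_{Z/X}|_{\ell(0)}/G^{\ell(0)}]$ made $\gm$-equivariant via $p_n$; and passing to $\gm$-quotients, one computes the induced homomorphism $G^{s(0)}\rtimes\gm\to G^{\ell(0)}$ directly, matching the resulting semidirect-product presentation with the one in~\ref{slicing-gm}(\ref{eq-act}) — equivalently, re-reading the construction of the torsor $\phi$ in the proof of Lemma~\ref{lem-crux} — to see that it is $(\iota,\gamma)$: the inclusion on $G^{s(0)}$ and $\gamma$ on $\gm$.

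With the three identifications established, (ii) follows: by (i), $(\iota,\gamma)$ is a surjection of algebraic groups, so restriction along it is fully faithful with essential image precisely the representations through which $G^{s(0)}\rtimes\gm$ acts via $(\iota,\gamma)$; transporting this back along the chain of equivalences yields the asserted description. The step I expect to be the main obstacle is this last identification of $p_n^*$ with honest restriction along $(\iota,\gamma)$, since it requires carrying the two $\gm$-actions — which differ by $p_n$ — through the identification $\BA^1_h/G^s\simeq G\back\I^\circ_{Z/X}$ and its $\tau_n$-pullback, i.e.\ through the bookkeeping already carried out in the proof of Lemma~\ref{lem-crux}, while keeping the $\gamma$-versus-$\gamma^{-1}$ conventions consistent.
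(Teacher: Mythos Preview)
Your proposal is correct and follows essentially the same route as the paper: part (i) is the paper's one-line argument that $G^{\ell(0)}=\gamma(\gm)\cdot G^{s(0)}$ (you phrase it via $G^{s(0)}=\ker\chi$ and $\chi\circ\gamma$ surjective, which is equivalent), and part (ii) matches the paper's strategy of identifying the two $\gm$-quotients of $\pt/G^{s(0)}$ with $\pt/G^{\ell(0)}$ and $\pt/(G^{s(0)}\rtimes\gm)$ respectively, then checking that $p_n^*$ corresponds to restriction along $(\iota,\gamma)$ by unwinding the $\psi_2$-twist from Lemma~\ref{lem-crux}. Your flagged ``main obstacle'' is exactly where the paper does its bookkeeping via the Cartesian squares involving $\psi_1,\psi_2$.
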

	
	\subsubsection{Proof of (i)} \label{groupss}
	Consider the $G$-equivariant projection map $\mc{N}^\circ_{Z/X} \to Z$. Since $\gamma(\gm)$ acts transitively on the fiber over $\ell(0) \in Z$, and since this fiber contains $s(0) \in \mc{N}^\circ_{Z/X}$, we have $G^{\ell(0)} = \gamma(\gm) \cdot G^{s(0)}$ as subgroups of $G$. This proves (i). 
	
	For expository purposes, let us record a few additional relations between these groups. We have a diagram of exact sequences 
	\begin{equation}\label{groups} 
	\begin{tikzcd}
	& \mu_n \ar[r, dash, shift left = 0.5] \ar[r, dash, shift right = 0.5] \ar[d, hookrightarrow, "{(\gamma \circ \on{inv} , \iota)}"] & \mu_n \ar[d, hookrightarrow] \\
	G^{s(0)} \ar[d, dash, shift right = 0.5] \ar[d, dash, shift left = 0.5] \ar[r, hookrightarrow] & G^{s(0)} \rtimes \gm \ar[r, twoheadrightarrow] \ar[d, twoheadrightarrow, "{(\iota, \gamma)}"] & \gm \ar[d, twoheadrightarrow, "n"] \\
	G^{s(0)} \ar[r, hookrightarrow] & G^{\ell(0)} \ar[r, twoheadrightarrow, "\act"] & \gm
	\end{tikzcd}
	\end{equation}
	Here, the map $\act$ encodes the action of $G^{\ell(0)}$ on the normal line to $z \in Z \subset X$ (i.e.\ the fiber mentioned in the previous paragraph), and the map `inv' is group inversion. 
	
	\subsubsection{Proof of (ii)}
	
	To unpack the definition of $\QCoh^{\gm}_{\tf, (n)}(\BA^1_{h^{1/n}} / \tau_n^*G^s)$, we need to describe the map of stacks 
	\[
		(\pt / G^{s(0)}) / \gm^{(n)} \xrightarrow{p_n} (\pt / G^{s(0)}) / \gm
	\]
	which appears in Corollary~\ref{cor-cover-apply}.\footnote{Here, and in what follows, we use the notation $\gm^{(n)}$ to indicate a copy of $\gm$ which acts via $n$ times the standard action. This is to avoid collision of notation for nonisomorphic stacks.} As noted there, the map is the identity on the underlying stack $\pt / G^{s(0)}$, but the two $\gm$-actions differ by $p_n$, the $n$-th power group homomorphism $\gm \to \gm$. In view of Lemma~\ref{lem-gs} and Remark~\ref{rmk-orbits}, this map is given by 
	\[
		G\ \backslash \ ( \mc{N}^\circ_{Z/X} ) \ / \ \gm^{(n)} \to G \ \backslash \ ( \mc{N}^\circ_{Z/X} ) \ / \ \gm.
	\]
	The $\gm$-action on the right hand side is the standard one which scales the fibers of the normal bundle, while the $\gm$-action on the left hand side is $n$ times the former. Choosing the basepoint $s(0) \in \mc{N}^\circ_{Z/X}$, this becomes the map 
	\[
	\pt / \stab_{G \times \gm^{(n)}}(s(0)) \to \pt / \stab_{G \times \gm}(s(0))
	\]
	which is induced by the map of groups $\psi_1$ in this Cartesian diagram: 
	\begin{cd}
		\stab_{G \times \gm^{(n)}}(s(0)) \ar[r, "\psi_1"] \ar[d, hookrightarrow] & \stab_{G \times \gm}(s(0)) \ar[r, "\sim"] \ar[d, hookrightarrow] & G^{\ell(0)}  \\
		G \times \gm^{(n)} \ar[r, "\id_G \times p_n"] & G \times \gm
	\end{cd}
	This diagram contains the additional observation that $\stab_{G \times \gm}(s(0)) \simeq G^{\ell(0)}$, which follows because $Z \simeq \mc{N}^\circ_{Z/X} / \gm$ using the standard $\gm$-action, and under this quotient map $s(0) \in \mc{N}^\circ_{Z/X}$ projects to $\ell(0) \in Z$. 
	
	We extend this by another Cartesian square
	\begin{cd}
		G^{s(0)} \rtimes \gm^{(n)} \ar{r}{\psi_2'}[swap]{\sim} \ar[d, hookrightarrow, "\iota \times \id_{\gm}"] & \stab_{G \times \gm^{(n)}}(s(0)) \ar[r, "\psi_1"] \ar[d, hookrightarrow] & \stab_{G \times \gm}(s(0)) \ar[r, "\sim"] \ar[d, hookrightarrow] & G^{\ell(0)} \\
		G \rtimes \gm^{(n)} \ar{r}{\psi_2}[swap]{\sim} & G \times \gm^{(n)} \ar[r, "\id_G \times p_n"] & G \times \gm
	\end{cd}
	Here, the map $\psi_2$ sends $(g, t) \mapsto (g\, \gamma^{-1}(t), t)$, which is a group isomorphism by definition of the action~\ref{slicing-gm}(\ref{eq-act}). The pullback of $\stab_{G \times \gm^{(n)}}(s(0))$ under $\psi_2$ is indeed $G^{s(0)} \rtimes \gm^{(n)}$, because the formula for $\psi_2$ shows that the whole subgroup $\gm^{(n)} \subset G \rtimes \gm^{(n)}$ lies in the stabilizer. And it is straightforward to check that the upper horizontal composition $G^{s(0)} \rtimes \gm^{(n)} \sra G^{\ell(0)}$ coincides with the map $(\iota, \gamma)$ from (i), by comparing the formula for $\psi_2$ with the formula~\ref{slicing-gm}(\ref{eq-act2}). 
	
	Thus, the map of stacks written at the beginning of this subsubsection identifies (via the isomorphisms of the previous paragraph) with the map
	\[
		\pt / (G^{s(0)} \rtimes \gm^{(n)}) \to \pt / G^{\ell(0)}
	\]
	which is induced by the group homomorphism $(\iota, \gamma)$ from (i). Tracing through the construction of Lemma~\ref{lem-crux}, we find that the action $G^{s(0)} \rtimes \gm \acts \mc{F}|_0$ mentioned in (ii) coincides with the action obtained by pulling back along 
	\[
		\pt / (G^{s(0)} \rtimes \gm^{(n)}) \simeq (\pt / G^{s(0)}) / \gm^{(n)} \hra \BA^1_{h^{1/n}} / \tau_n^* G^s. 
	\]
	This proves (ii). \hfill $\square$
	
	\subsection{The main result} 
	
	In this subsection, we assemble the previous constructions to obtain an algebraic description of the category $\QCoh^G_{\tf}(X)$. 
	\begin{defn}\label{claim1}
		Let $A$ be the filtered Hopf algebra corresponding to the group scheme $\tau_n^*G^s \to \BA^1_{h^{1/n}}$ under Proposition~\ref{ar} and Remark~\ref{rmk-monoidal}. This map is flat by Lemma~\ref{lem-gs}(i), and~\ref{slicing-gm}(\ref{eq-act}) gives a $\gm$-action on $\tau_n^*G^s$, so the Artin--Rees construction applies. As is always the case with Artin--Rees, we have 
		\e{
			\Spec A &\simeq \big((\tau_n^*G^s)|_{\BA^1_{h^{1/n}} \setminus \{0\}}\big) / \gm \\
			\Spec \gr A &\simeq \tau_n^*G^s|_0 \simeq G^{s(0)}. 
		} 
		In the second identification, the $\gm$-action on the left hand side given by the grading corresponds to the usual $\gm$-action on the right hand side (via $\gamma$). 
	\end{defn}
	\begin{theorem}\label{thm-main}
		Let $\mc{C}_{(n)}(X)$ be the category whose objects are triples $(V, F, \alpha)$ as follows: 
		\begin{itemize}
			\item $(V, F)$ is an exhaustively filtered vector space, and $\alpha : V \to V \otimes A$ is a coaction map which makes $(V, F)$ a filtered comodule for $A$. 
			\item We furthermore require that the associated graded action 
			\[
			\gm \ltimes G^{s(0)} \acts \gr^F V
			\]
			factors through the quotient $\gm \ltimes G^{s(0)} \sra G^{\ell(0)}$ from Corollary~\ref{cor-n}(i).
		\end{itemize}
		The morphisms in this category are those of filtered $A$-comodules. Then there are equivalences of categories 
		\[
		\QCoh^G_{\tf}(X) \simeq \QCoh^{\gm}_{\tf, (n)}(\BA^1 / \tau_n^*G^s) \simeq  \mathcal{C}_{(n)}(X). 
		\]
	\end{theorem}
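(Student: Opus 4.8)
The plan is to obtain the two displayed equivalences by composing the reductions carried out in Sections~\ref{slice-line}--\ref{slicing-gm}; no new geometric input is needed.

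First I would recall that combining~\ref{I}(\ref{equiv1}) with Lemma~\ref{lem-gs}(iii) gives $G\back\I^\circ_{Z/X}\simeq\BA^1_h/G^s$ — quotient the isomorphism $(G\times\BA^1)/G^s\simeq\I^\circ_{Z/X}$ of that lemma by $G$ — and hence $\QCoh^G(X)\simeq\QCoh^{\gm}(\BA^1_h/G^s)$. Restricting both sides to torsion-free objects (which on the right means flat over $\BA^1_h$, in accordance with the torsion-free/flat conventions of this section) yields $\QCoh^G_{\tf}(X)\simeq\QCoh^{\gm}_{\tf}(\BA^1_h/G^s)$. Next I would apply Corollary~\ref{cor-cover-apply} — itself Proposition~\ref{cor-cover} specialized to $\mc{Y}=\BA^1_h/G^s$ — to identify this with $\QCoh^{\gm}_{\tf,(n)}(\BA^1_{h^{1/n}}/\tau_n^*G^s)$, for the positive integer $n$ fixed in~\ref{slicing-gm}. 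The key point is that the $\gm$-action on $\BA^1_{h^{1/n}}/\tau_n^*G^s$ that enters here — a priori merely the $\tau_n$-pullback of the inexplicit action on $\BA^1_h/G^s$ — is, by Lemma~\ref{lem-crux}, the explicit conjugation action induced from~\ref{slicing-gm}(\ref{eq-act}); therefore $\tau_n^*G^s$ equipped with that action descends to a flat affine group scheme over $\BA^1_{h^{1/n}}/\gm$ and is classified (Remark~\ref{rmk-monoidal}, Definition~\ref{claim1}) by the filtered Hopf algebra $A$. Composing the above gives the first equivalence of the theorem.

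For the second equivalence, I would invoke the final assertion of Remark~\ref{rmk-monoidal}: $\gm$-equivariant torsion-free quasicoherent sheaves on $\BA^1_{h^{1/n}}/\tau_n^*G^s$ are the same as filtered $A$-comodules, i.e.\ triples $(V,F,\alpha)$ as in the first bullet of the statement, where $(V,F)$ is recovered by the Artin--Rees construction (Proposition~\ref{ar}) and $\alpha$ encodes the $\tau_n^*G^s$-equivariance. It then remains to match the two full subcategories. By Corollary~\ref{cor-cover-apply}, $\QCoh^{\gm}_{\tf,(n)}$ consists of the sheaves whose restriction to the zero fiber $\pt/G^{s(0)}$ lies in the essential image of $p_n^*$, and by Corollary~\ref{cor-n}(ii) this is precisely the requirement that the $G^{s(0)}\rtimes\gm$-action on the zero fiber factor through the surjection $(\iota,\gamma)\colon G^{s(0)}\rtimes\gm\sra G^{\ell(0)}$ of Corollary~\ref{cor-n}(i). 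Finally I would identify the zero fiber with $\gr^F V$ under Artin--Rees: its $\gm$-action is the grading and its $G^{s(0)}=\Spec\gr A$-coaction is $\gr\alpha$ (Definition~\ref{claim1}), so the combined $G^{s(0)}\rtimes\gm$-action is exactly the one constrained in the second bullet. Thus $\QCoh^{\gm}_{\tf,(n)}(\BA^1_{h^{1/n}}/\tau_n^*G^s)$ corresponds to $\mc{C}_{(n)}(X)$, proving the second equivalence.

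I do not expect a genuine obstacle here: the theorem is a synthesis, and the substantive work lives in the already-established auxiliary results — above all Lemma~\ref{lem-crux}, which pins down the otherwise inexplicit $\gm$-action after the degree-$n$ cover, and Corollary~\ref{cor-n}, which converts the condition `weights divisible by $n$' into a factorization of a group action. The one thing demanding real care is bookkeeping: keeping the torsion-free/flat conventions consistent along the chain, and tracking which $\gm$-action is in play at each stage (standard on $\BA^1_h$; via $\gamma$ on $X$; their $p_n$-twists after pulling back along $\tau_n$), so that the Hopf algebra governing the final comodule description is genuinely the $A$ of Definition~\ref{claim1} and not a twist of it.
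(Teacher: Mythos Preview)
Your proposal is correct and follows essentially the same approach as the paper: the first equivalence is obtained by chaining the identification $\QCoh^G_{\tf}(X)\simeq\QCoh^{\gm}_{\tf}(\BA^1_h/G^s)$ from~\ref{I}(\ref{equiv1}) and Lemma~\ref{lem-gs}(iii) with Corollary~\ref{cor-cover-apply} (using Lemma~\ref{lem-crux} to pin down the $\gm$-action), and the second by the last sentence of Remark~\ref{rmk-monoidal} together with Corollary~\ref{cor-n}(ii). The paper's own proof is a three-sentence citation of exactly these ingredients; you have simply unpacked the chain more explicitly.
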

	\begin{proof}
		The first equivalence follows from Corollary~\ref{cor-cover-apply}. The second equivalence follows from the definition of $A$ and the last sentence of Remark~\ref{rmk-monoidal}. The reformulation of the $(-)_{(n)}$ condition given in the theorem statement follows from Corollary~\ref{cor-n}(ii). 
	\end{proof}
	
	\begin{rmk}
		If one desires to remove the torsion-free requirement, then in Proposition~\ref{ar} the notion of `filtered vector space' must be replaced with that of `graded vector space equipped with a degree 1 endomorphism' and the criterion for descent given in Lemma~\ref{lem-sub} must be reformulated accordingly. 
	\end{rmk}

	\begin{rmk} \label{rmk-concrete} 
		Fixing the nonzero point $1 \in \BA^1_{h^{1/n}}$ allows one to view these constructions more concretely (albeit less canonically): 
		\begin{enumerate}[label=(\roman*)]
			\item We get identifications
			\e{
				\Spec A &\simeq G^{s(1)}\\
				\Spec \gr A &\simeq G^{s(0)}, 
			} 
			where the identifications in the second line are $\gm$-equivariant. The filtration $F^{\ge m}$ induced on the ring of functions $\oh_{G^{s(1)}}$ is given as follows: for a function $f$, we have $f \in F^{\ge m}$ if and only if the limit 
			\[
			\lim_{t \to 0} \big(t^{-m} \act_{t^{-1}}^*f\big)
			\]
			exists as a function on $G^{s(0)}$. Here $\act_{t^{-1}}$ is the isomorphism $G^{s(t^n)} \to G^{s(1)}$ given by the $\gm$-action on $G^s$. 
			\item Given an object $\mc{F} \in \QCoh^{G}_{\tf}(X)$ corresponding to $(V, F, \alpha)$ via Theorem~\ref{thm-main}, the fiber $\mc{F}|_{\ell(1)}$ identifies with $V$, and the action of $G^{\ell(1)}$ on this fiber is given by the coaction $\alpha$ (forgetting the filtrations). The fiber $\mc{F}|_{\ell(0)}$ is given by $\gr V$. 
			\item The action of $G^{\ell(0)}$ on the fiber $\mc{F}|_{\ell(0)}$ is determined as follows. The action of $G^{s(0)}$ on $\mc{F}|_{\ell(0)}$ is given by $\gr \alpha$. Since this coaction map is graded, the action of $G^{s(0)}$ extends to an action by $G^{s(0)} \rtimes \gm$. The second bullet point in Theorem~\ref{thm-main} says that the action descends to one by the quotient group $G^{\ell(0)}$. 
		\end{enumerate} 
	\end{rmk}

    \section{Specialization maps} \label{sec:specialization}

	Retain the notations and assumptions of Section~\ref{sec:main}. This means that $X = U \cup Z$ is a smooth fastened chain equipped with fastening datum $(\gamma, \ell)$, which determines the group schemes $G^s$ and $\tau_n^*G^s$, the latter being $\gm$-equivariant over $\BA^1$. Theorem~\ref{thm-main} gives us an equivalence between $\QCoh^G_{\tf}(X)$ and a category of filtered co-modules for the filtered Hopf algebra $A = \oh_{G^{s(1)}}$. In this section, we show that some objects in $\QCoh^G_{\tf}(X)$ can be presented more economically, without referring explicitly to $A$. 
	
	In the first half of this section, we give nicer classifications of line bundles (Proposition~\ref{prop:linebundles}) and sheaves whose restrictions to $U$ are $G$-equivariant local systems (Corollary~\ref{cor-locsys}). These classifications are `nicer' in the sense that the filtered Hopf algebra $A$ does not explicitly appear; instead, in each case $\tau_n^*G^s$ acts through a certain quotient 
	\[
	    q : \tau_n^*G^s \to H
	\]
	(see~\ref{s:factor}), where $H \to \BA^1$ is a $\gm$-equivariant group scheme with the property that the action $\gm \acts H|_0$ is trivial. This implies that the specialization to the zero fiber for $H$ is encoded in a \emph{bona fide} map of algebraic groups $H|_0 \to H|_1$. 
	
	In the second half of this section, we investigate the largest subcategory of $\Vect^G(X)$ for which this works. We show that there is a largest possible quotient of $\tau_n^*G^s$ which admits a `specialization map' description (Corollary~\ref{cor-tame-univ}) and we state the corresponding classification of vector bundles in Corollary~\ref{cor-tame-classify}. Lastly, in~\ref{apply}, we apply this idea to the problem of classifying sheaves whose restrictions to $U$ are \emph{twisted} $G$-equivariant local systems, which is the case of interest for our application (Section~\ref{sec:application}). 
	
	\noindent \emph{Notations.} For the remainder of this section, we adopt the more concise notation $\tilde{G}^s = \tau_n^*G^s$. Recall from Remark~\ref{rmk-orbits} that $G^{s(0)} \simeq G^s|_0$ and $G^{s(1)} \simeq G^s|_1$. As a matter of convention, any scheme written as $S \times \BA^1$ (for some $S$) will be equipped with the $\gm$-action given by the standard action on the second factor. For an arbitrary map $H \to \BA^1$, we let $H_0$ and $H_1$ denote the fibers over $0, 1 \in \BA^1$, respectively. For a map $q$ of schemes over $\BA^1$, we let $q_0$ and $q_1$ denote the corresponding fibers of this map. 
	
	\subsection{Line bundles} \label{s:line}
	In this subsection, we give a very simple description of the category of $G$-equivariant line bundles (Proposition~\ref{prop:linebundles}(ii)), which will be applied in~\ref{ssec:case-line}. Along the way, we prove a technical result which may be of independent interest: the specialization of a character of $G^{s(1)}$ to the zero fiber $G^{s(0)}$ is always well-defined (Proposition~\ref{prop:linebundles}(i)). This result is applied again in~\ref{apply}. 
	
	Let $\Pic^G(X)$ denote the category of $G$-equivariant line bundles on $X$. For any character $\chi : G^{s(1)} \to \gm$, let $\Pic^G_{\chi}(X)$ be the full subcategory consisting of line bundles $\mc{L}$ for which the action $G^{s(1)} \acts \mc{L}|_{s(1)}$ factors through $\chi$. We have a decomposition
	\[
	\Pic^G(X) \simeq \bigsqcup_{\chi} \Pic^G_{\chi}(X).
	\]
	
	\begin{prop}\label{prop:linebundles}
		Fix a character $\chi : G^{s(1)} \to \gm$. 
		\begin{enumerate}
			\item[(i)] We have $\chi \in F^{\ge 0} A$. Concretely, this means that $\lim_{t \to 0} \on{act}_{t^{-1}}^* \chi$ exists as a character of $G^{s(0)}$. 
		\end{enumerate} 
		Let $q : \tilde{G}^s \to \gm \times \BA^1$ be the $\gm$-equivariant map of group schemes given by (i). The map $q$ is uniquely determined by the requirement that $q_1 : G^{s(1)} \to \gm$ equals $\chi$. 
		\begin{enumerate}
			\item[(ii)] We have a canonical equivalence of groupoids 
			\[
			\Pic^G_{\chi}(X) \simeq (n\BZ + m) \times (\pt / \BC^\times). 
			\]
			The residue class $m \bmod{n}$ is determined by the following property: if $\mu_n \simeq G^{s(0)} \cap \gamma(\gm)$ (see~\ref{groupss}), then the restriction of the character 
			\begin{cd}[column sep = 0.8in]
				\mu_n \ar[r, hookrightarrow, "\on{inclusion}"] & G^{s(0)} \ar[r, "\lim_{t \to 0} \mathrm{Ad}_{\gamma(t^{-1})}^* \chi"] & \gm
			\end{cd}
			is given by $\zeta \mapsto \zeta^m$. 
		\end{enumerate}
	\end{prop}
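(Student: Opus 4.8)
The plan is to prove part~(i) by analyzing the one-dimensional representation theory of the group scheme $\tilde G^s = \tau_n^* G^s$ over $\BA^1_{h^{1/n}}$, and then read off part~(ii) by combining this with Theorem~\ref{thm-main}. For~(i): a character $\chi : G^{s(1)} \to \gm$ is the same as a rank-one comodule for $A = \oh_{G^{s(1)}}$, i.e.\ an element of the abelian group $A^{\times}/k^{\times}$ of grouplike units modulo scalars. The content of~(i) is that every such grouplike unit lies in $F^{\ge 0}A$, equivalently that $\lim_{t\to 0}\on{act}_{t^{-1}}^*\chi$ exists as a regular function on $G^{s(0)}$ (and is then automatically a character, being a limit of grouplikes). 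I would prove this by observing that $G^{s(1)}$ is an \emph{affine algebraic group} whose character lattice $\Hom(G^{s(1)},\gm)$ is a finitely generated abelian group on which $\gm$ acts (through $\gamma$-conjugation composed with the isomorphism $\act_{t}:G^{s(t^n)}\to G^{s(1)}$). But $\gm$-actions on finitely generated abelian groups are through a finite quotient, hence trivialize after pulling back along some power map — more precisely, the point is that a character $\chi$ of $G^{s(1)}$, transported to a family of characters $\chi_t$ of $G^{s(t^n)}$, must have weights under the $\gm$-grading on $\oh_{\tilde G^s}$ that are bounded below, because $\chi$ is a single function and the Artin--Rees filtration $F^{\ge m}A$ exhausts $A$; one then checks that a grouplike element cannot have strictly negative associated-graded degree, since its leading term $\gr\chi$ would be a nonconstant grouplike function on $G^{s(0)}$ of negative $\gm$-weight, and such a function, being invertible, would force $\oh_{G^{s(0)}}$ to contain a $\gm$-eigenfunction of negative weight whose inverse has positive weight — compatible, so this needs the extra input that $G^{s(0)}$ is an \emph{affine algebraic group} and that a character of it composed with the $\gm$-action $\gamma$ gives a weight which is an honest integer, but the leading-term grouplike must match the $\mu_n$-restriction, pinning its degree mod $n$. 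Rather than fight this, the cleaner route is: $F^{\ge 0}A$ is a Hopf subalgebra (it is the subalgebra of functions extending over the whole $\gm$-equivariant family), its grouplikes are exactly the characters of $G^{s(1)}$ that specialize, and one shows $F^{\ge 0}A$ contains \emph{all} grouplikes by noting $F^{\ge 0}A$ and $A$ have the same fraction field and $A$ is generated over $F^{\ge 0}A$ by inverting a single $\gm$-weight-homogeneous element, so any grouplike unit already lies in $F^{\ge 0}A$ up to a power of that element, and the power must be zero since otherwise the unit's specialization would vanish or blow up along $G^{s(0)}$, contradicting that its $\gr$ is a \emph{unit}.

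Once~(i) is in hand, the map $q:\tilde G^s \to \gm\times\BA^1$ is defined as the $\gm$-equivariant map of group schemes whose value at $1\in\BA^1_{h^{1/n}}$ is $\chi$ and whose value at $0$ is $\lim_{t\to 0}\on{act}_{t^{-1}}^*\chi$; uniqueness is because a map out of an irreducible (or at least $\gm$-connected-to-the-generic-fiber) group scheme to $\gm\times\BA^1$ is determined by its generic fiber. Since $\gm$ acts trivially on $(\gm\times\BA^1)|_0 = \gm$ by the standing convention, $q$ is exactly a morphism of the type advertised in the section introduction.

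For part~(ii): by Theorem~\ref{thm-main}, $\Pic^G_\chi(X)$ is equivalent to the groupoid of rank-one objects $(V,F,\alpha)$ of $\mathcal{C}_{(n)}(X)$ for which the underlying coaction $\alpha$ is (a scalar extension of) $\chi$. Forgetting the filtration, such an object is a one-dimensional $\chi$-comodule, which is rigid with automorphism group $\BC^\times$ — this produces the $\pt/\BC^\times$ factor. The remaining data is the filtration $F$ on the one-dimensional $V$, which is an exhausting $\BZ$-filtration, i.e.\ a single integer $j$ (the jump); two such are isomorphic as filtered comodules iff the integers agree, since the comodule map is fixed. So naively $\Pic^G_\chi(X)\simeq \BZ\times(\pt/\BC^\times)$; the constraint cutting $\BZ$ down to a coset $n\BZ + m$ is precisely the second bullet in Theorem~\ref{thm-main} (equivalently the $(-)_{(n)}$ condition of Corollary~\ref{cor-cover-apply}): the associated graded $\gr^F V$ is a one-dimensional representation of $G^{s(0)}\rtimes\gm$ sitting in $\gm$-degree $j$, and it must factor through $G^{\ell(0)} = (G^{s(0)}\rtimes\gm)/\mu_n$ (Corollary~\ref{cor-n}(i)), which forces the $\mu_n\subset G^{s(0)}\rtimes\gm$ to act trivially on $\gr^F V$. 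Computing that $\mu_n$-action: it acts on $\gr^F V$ through the diagonal of (the $\mu_n$-restriction of) $\gr\chi = \lim_{t\to 0}\Ad_{\gamma(t^{-1})}^*\chi$ and the $\mu_n$-in-$\gm$ scaling by $t^{-j}$ (using the embedding $(\gamma\circ\on{inv},\iota)$ from diagram~(\ref{groups})); triviality says $\zeta^m\cdot\zeta^{-j}=1$ for all $\zeta\in\mu_n$, i.e.\ $j\equiv m\pmod n$, where $m$ is defined exactly by the displayed $\mu_n$-character in the statement. That gives the coset $n\BZ+m$, and the groupoid structure ($\BC^\times$ of automorphisms at each point, no morphisms between distinct jumps) is unchanged, completing~(ii).

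The main obstacle is part~(i) — establishing that a character always specializes. Everything in~(ii) is a routine unwinding of Theorem~\ref{thm-main} plus the $\mu_n$-bookkeeping from~(\ref{groups}), but~(i) genuinely requires understanding why the Artin--Rees filtration of $A$ behaves well on \emph{grouplike} elements; the crux is ruling out negative leading degree, and the right tool is that a grouplike in $\gr A = \oh_{G^{s(0)}}$ is a \emph{unit}, while $\gr A$ (being $\gm$-graded with $G^{s(0)}$ an affine algebraic group whose relevant $\gm$-weights are nonnegative on, say, a Borel's unipotent radical) has no units of nonzero $\gm$-weight except those coming from characters of a maximal torus — so one reduces to a torus computation where the claim is the elementary fact that conjugation-twisted characters of a torus have bounded-below weights. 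I would flag that this reduction uses that $G^{s(0)}$, being the specialization $\gr(\text{reductive-ish }G^{s(1)})$, has a well-behaved structure; if $G^{s(1)}$ is not reductive one works with $G^{s(1)}/\text{(unipotent radical)}$, on which characters are detected.
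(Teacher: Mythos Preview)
Your argument for~(ii) is correct and matches the paper's (\S\ref{prop:linebundles-1}): both unwind Theorem~\ref{thm-main} for rank-one objects, note that the datum is a one-dimensional space plus a single integer (the jump of $F$), and read off the coset $n\BZ+m$ from the $\mu_n$-kernel of $G^{s(0)}\rtimes\gm \sra G^{\ell(0)}$.

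Part~(i), however, is not proved. You correctly isolate the crux --- a grouplike in $A$ must lie in $F^{\ge 0}A$ --- but none of your sketches closes it: the character-lattice idea confuses a discrete lattice with the continuous $\gm$-family; the claim that $A$ is a localization of $F^{\ge 0}A$ at a single homogeneous element is unjustified; and ``its $\gr$ is a unit'' presupposes the conclusion. The paper's route is geometric and quite different. One passes to the quotient $H := \tilde G^s / K$, where $K$ is the flat closure of $\ker(\chi)\subset G^{s(1)}$ inside $\tilde G^s$; then $H\to\BA^1$ is a flat $\gm$-equivariant group scheme of relative dimension $\le 1$ with $H_1\hra\gm$. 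The key input is a separate Lemma~\ref{lem-dim-1}: if $H$ has relative dimension~$1$ (so $H_1\simeq\gm$), then the $\gm$-action on $H_0$ is weakly expanding, i.e.\ $\oh_{H_0}=\gr\,\oh_{H_1}$ lives in nonnegative degrees. This is proved by contradiction via the Luna slice theorem: a non-expanding action would yield a $\gm$-equivariant \'etale chart near $1_{H_0}$ forcing the general fiber $H_1$ to meet the chart in a copy of $\BA^1$, contradicting $H_1\simeq\gm$. Once $\oh_{H_0}$ sits in nonnegative degrees, \emph{every} function on $H_1$ (in particular $\chi$) lies in $F^{\ge 0}$ and extends. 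The relative-dimension-$0$ case is handled by observing that distinct components of $G^{s(1)}$ cannot specialize to the same component of the (reduced) group scheme $G^{s(0)}$.

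If you want a purely algebraic substitute for Lemma~\ref{lem-dim-1} along the lines you were reaching for, here is one: regard $\chi$ as a rational function on the smooth variety $\tilde G^s$, invertible off $G^{s(0)}$, and write $\mathrm{div}(\chi)=\sum_\alpha n_\alpha[D_\alpha]$ over the components $D_\alpha$ of $G^{s(0)}$. Since $\chi$ is a character fiberwise over $\BA^1\setminus\{0\}$, one has $m^*\chi = p_1^*\chi\cdot p_2^*\chi$ as rational functions on $\tilde G^s\times_{\BA^1}\tilde G^s$; comparing divisors (the multiplication $m$ is smooth) gives $n_{\beta\gamma}=n_\beta+n_\gamma$, so $\alpha\mapsto n_\alpha$ is a homomorphism $\on{Com}(G^{s(0)})\to\BZ$ from a finite group, hence zero. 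This shows $\chi$ extends as a unit. Your proposal gestures toward this but does not arrive at it.
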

	
	\subsubsection{Proof that (i) implies (ii) in Proposition~\ref{prop:linebundles}}  \label{prop:linebundles-1}
	
	The first equivalence in Theorem~\ref{thm-main} implies that an object $\mc{L} \in \Pic^G(X)$ is equivalent to a pair $(\varphi, \mc{L}')$ defined as follows: 
	\begin{itemize}
		\item $q : \tilde{G}^s \to \gm \times \BA^1$ is a $\gm$-equivariant homomorphism of group schemes. 
		\item $\mc{L}'$ is a $\gm$-equivariant line bundle on $\BA^1$.
		\item We require that the restriction of $\gm \acts \mc{L}'|_{0}$ to $\mu_n$ is given by the group homomorphism $\mu_n \xrightarrow{\gamma} G^{s(0)} \xrightarrow{q_0} \gm$. 
	\end{itemize}
	This is because, for any $\gm$-equivariant line bundle $\mc{L}'$ on $\BA^1$, the group scheme $\curAut_{\BA^1}(\mc{L}')$ is $\gm$-equivariantly isomorphic to $\gm \times \BA^1$. 
	
	Furthermore, $\mc{L} \in \Pic^G_{\chi}(X)$ if and only if $q_1 = \chi$. Thus $q$ is determined, and the remaining datum $\mc{L}'$ amounts to the choice of a one-dimensional vector space (which plays the role of $\mc{L}'|_{1}$) and an integer (which determines the filtration). The third bullet point above says that this integer lies in $n\BZ + m$, because $q_1 = \chi$ implies that $q_0 = \lim_{t \to 0} \on{act}_{t^{-1}}^* \chi$. \hfill $\qed$
	
	The rest of this subsection is devoted to the proof of Proposition~\ref{prop:linebundles}(i). 
	
	\begin{lem}\label{lem-dim-1}
		Let $\pi : H \to \BA^1$ be a flat $\gm$-equivariant group scheme of relative dimension 1. If there is an isomorphism $\varphi: H_1 \simeq \gm$, then the following are true: 
		\begin{enumerate}[label=(\roman*)]
			\item The $\gm$-action on $H_0$ is (weakly) expanding as $t \to 0$.\footnote{This means that $\oh_{H_0}$ lives in nonnegative degrees with respect to the $\gm$-action. We omit the word `weakly' from now on, because we will not use the `strong' notion anywhere.}
			\item There exists a (unique) $\gm$-equivariant map $q : H \to \gm \times \BA^1$ of group schemes with the property that $q_1 : H_1 \to \gm$ equals $\varphi$. 
		\end{enumerate} 
	\end{lem}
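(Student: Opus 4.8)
\textbf{Proof proposal for Lemma~\ref{lem-dim-1}.}

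The plan is to reduce everything to a concrete computation with graded rings via the Artin--Rees equivalence of Proposition~\ref{ar}, applied to the group scheme $H \to \BA^1$. Since $\pi$ is flat and $\gm$-equivariant of relative dimension $1$, the Artin--Rees construction presents $H$ as $\Spec$ of a filtered Hopf algebra; equivalently, $\oh_H$ is a graded $k[h]$-Hopf algebra, flat (hence free, since $k[h]$ is a PID and the module is graded) over $k[h]$, whose fiber at $h=1$ is $\oh_{\gm} = k[x^{\pm 1}]$ and whose fiber at $h = 0$ is the $1$-dimensional group scheme $H_0$. The first step is to unwind what the data of such an $\oh_H$ is: a $\BZ$-filtered commutative Hopf algebra $B = \oh_{H_1}$ with $\gr B = \oh_{H_0}$. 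So I must understand filtered structures on $k[x^{\pm 1}]$ whose associated graded is the coordinate ring of a one-dimensional group scheme.

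The key point is the classification of one-dimensional connected group schemes over $k$ (char.\ zero): they are $\gm$, $\ga$, or a finite extension thereof — but since $\gr B$ is a domain (it is a subquotient sitting inside a flat family degenerating from $k[x^{\pm1}]$, and more directly because $H$ is a variety and flatness forces $H_0$ reduced of dimension $1$ with a fixed point under the $\gm$-action), $H_0$ is connected, hence $H_0 \simeq \gm$ or $H_0 \simeq \ga$. Now I run the filtration analysis on $B = k[x^{\pm1}]$: choose the filtration degree $d = \deg(x)$ (well-defined since $x$ is a unit and the filtration is multiplicative, so $\deg(x^{-1}) = -\deg(x)$, and $d \neq 0$ because otherwise $\gr B$ would be a Laurent polynomial ring in degree $0$, i.e.\ $H_0$ would not have the right $\gm$-weights and more to the point the family would be constant — which is fine, that is the $H_0 \simeq \gm$ case). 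Replacing $x$ by $x^{-1}$ if needed, assume $d > 0$. Then $\gr B$ is generated by the image of $x$ in degree $d$ and the image of $x^{-1}$ in degree $-d$; their product is $1$, so $\gr B \simeq k[\bar x^{\pm 1}]$ with $\bar x$ in degree $d$, giving $H_0 \simeq \gm$ with $\gm$ acting through the $d$-th power character. If instead $d = 0$: then $1 - \lambda x \in F^{\ge 1}$ for a suitable scalar $\lambda$ after translating, and $\gr B$ acquires a nilpotent-free degree-$1$ generator, forcing $H_0 \simeq \ga$; in all cases $\oh_{H_0}$ lives in nonnegative degrees (generated in degrees $\geq 0$: either by $\bar x^{\pm 1}$ in degrees $\pm d$ — wait, $-d < 0$), so I must be careful here and instead argue directly: the $\gm$-fixed point of $H_0$ is the identity element $e \in H_0$ (it is the limit as $t \to 0$ of the identity section, which is $\gm$-fixed), and a one-dimensional group scheme with an action of $\gm$ fixing $e$ and acting with some weight on $T_e H_0$ is expanding iff that weight is positive; since the family is flat and the generic fiber $\gm$ has the identity as its limit point as $h \to 0$ inside $H$, the weight is $+d > 0$ in the $\gm$ case (after the normalization $d>0$) and $+1$ in the $\ga$ case. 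That gives (i).

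For (ii): the map $q : H \to \gm \times \BA^1$ should be the unique $\gm$-equivariant homomorphism restricting to $\varphi$ at $h = 1$. On rings this is a filtered Hopf algebra map $k[y^{\pm1}] \to B$ (with $y$ in an appropriate degree) sending $y \mapsto x$ (after identifying $B_1$ with $k[x^{\pm1}]$ via $\varphi$); the only freedom is the degree assigned to $y$, and $\gm$-equivariance forces $\deg y = \deg x = d$. Existence is then immediate (the inclusion $k[x^{\pm1}] \hookrightarrow B$ of the subalgebra generated by $x, x^{-1}$, which by the $d \neq 0$ analysis above is all of $B$ when $H_0 \simeq \gm$, so $q$ is even an isomorphism in that case); in the $H_0 \simeq \ga$ case $H_1 \simeq \gm$ maps to $\gm \times \BA^1$ by the constant family $\varphi$, which on associated graded is the trivial map $\ga \to \gm$, consistent. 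Uniqueness follows because a $\gm$-equivariant homomorphism out of $H$ is determined by its restriction to the dense open $H|_{\BA^1 \setminus 0} \simeq H_1 \times (\BA^1 \setminus 0)$ (using $\gm$-equivariance to spread out from $h=1$), where it must be $\varphi \times \id$.

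\textbf{Main obstacle.} The delicate point is (i) — pinning down that the degeneration $H_0$ is expanding rather than merely having some $\gm$-weights of each sign. The cleanest route is probably to avoid the case analysis on $\ga$ versus $\gm$ and argue uniformly: the identity section $\BA^1 \to H$ is $\gm$-equivariant, its image is a $\gm$-stable copy of $\BA^1$, and flatness plus $\gm$-equivariance of $\pi$ force the filtration on $\oh_{H_1} = B$ to be \emph{exhausting and bounded below} (this is where relative dimension exactly $1$ and flatness over $k[h]$, giving a \emph{graded free} — not just flat — module, is used); boundedness below of the filtration is precisely the statement that $\gr B$ is generated in degrees bounded below, and combined with $\gr B$ being a one-dimensional Hopf algebra (so generated by a single element in a single positive degree, after the $d > 0$ normalization) one gets nonnegativity. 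I expect writing this last implication carefully — that a graded Hopf algebra which is a one-dimensional domain and is generated in degrees $\geq$ some bound is actually generated in degrees $\geq 0$ — to be the one genuinely non-formal step, handled by the explicit classification $\oh_{\gm}$ (generated in degrees $\pm d$, so one must rule out $d < 0$ by the normalization) or $\oh_{\ga}$ (generated in degree $+1$).
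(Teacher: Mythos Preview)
Your treatment of (ii), and the reduction to filtered Hopf structures on $k[x^{\pm1}]$ via Artin--Rees, is sound and close to the paper's: once (i) is known, one has $F^{\ge 0}\oh_{H_1} = \oh_{H_1}$, so the ring map $k[y^{\pm1}] \to k[x^{\pm1}]$ (source given the trivial filtration, since $\gm$ acts trivially on the first factor of $\gm \times \BA^1$) is automatically filtered and yields $q$. (Small correction: you want $\deg y = 0$, not $\deg y = d$.)

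The genuine gap is in (i). Several of your steps are incorrect or unjustified:
\begin{itemize}
    \item Multiplicativity of the filtration gives only $\deg(x) + \deg(x^{-1}) \le \deg(1) = 0$, not equality; in fact compatibility with the antipode forces $\deg(x) = \deg(x^{-1})$, hence $\deg(x) \le 0$, not $\deg(x^{-1}) = -\deg(x)$.
    \item If $d := \deg(x) \neq 0$, your computation $\gr B \simeq k[\bar x^{\pm 1}]$ with $\bar x$ in degree $d$ puts $\bar x^{-1}$ in degree $-d$, so $\gr B$ is \emph{not} concentrated in nonnegative degrees --- you notice this yourself. Swapping $x \leftrightarrow x^{-1}$ is no fix: it does not change the filtration, hence does not change the grading on $\gr B$.
    \item In the $\ga$ case, your assertion that ``the weight is $+1$'' is precisely the content of (i) and is nowhere justified. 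The $\gm$-action on $(H_0)^\circ \simeq \ga$ is by group automorphisms, hence by scaling with \emph{some} integer weight; ruling out the wrong sign is the whole point.
\end{itemize}
(You also assume $H_0$ is connected, which is neither justified nor necessary.)

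The paper's proof of (i) is entirely different and avoids any classification. It argues by contradiction: if (i) fails, the $\gm$-weight on $T_{1_{H_0}} H_0$ is strictly positive; together with the weight $+1$ in the base direction, \emph{both} weights on the two-dimensional space $T_{1_{H_0}} H$ are positive. The Luna Slice Theorem then produces a $\gm$-equivariant \'etale map $\psi \colon W \to T_{1_{H_0}} H \cong \BA^2$ from a $\gm$-invariant affine open $W \ni 1_{H_0}$, and positivity of the weights forces $\psi$ to be surjective. From this one reads off that $H_1 \cap W \simeq \BA^1$, an open subscheme of $H_1 \simeq \gm$; but $\gm$ has no open subscheme isomorphic to $\BA^1$, contradiction.
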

	\begin{proof}
		(i). Suppose for sake of contradiction that the action $\gm \acts H_0$ is not expanding. This implies that all weights of the $\gm$-action on the (two-dimensional) tangent space $\mc{T}_{1_{H_0}}H$ are positive (here $1_{H_0} \in H_0$ is the identity element).  (It also implies that the identity component of $H_0$ is isomorphic to $\ga$, but we will not use this explicitly.) 
		
		Applying the Luna Slice Theorem to the $\gm$-orbit $\{1_{H_0}\} \subset H$ gives us a $\gm$-invariant affine open neighborhood $W \subset H$ containing $1_{H_0}$, along with a $\gm$-equivariant \'etale map 
		\[
			\psi : W \to \mc{T}_{1_{H_0}} H
		\]
		which induces an isomorphism on tangent spaces at $1_{H_0} \in W$. (The last phrase uses that $\pi$ is smooth, which is true because $\pi$ is flat, finitely presented, and has smooth fibers.) The $\gm$-equivariance of $\psi$, along with the previous paragraph, implies that $\psi$ is surjective. This in turn implies that $H_1 \cap W \simeq \BA^1$, contradicting the assumption that $H_1 \simeq \gm$. 
		
		(ii). Point (i) says that $\oh_{H_0} \simeq \gr \oh_{H_1}$ lives in nonnegative degrees. In view of Remark~\ref{rmk-concrete}(i), this implies that any regular function on $H_1$ extends to a $\gm$-invariant function on $H$. Applying this to the character $\varphi$ yields the desired result.
	\end{proof}
	
	One can describe all possibilities for $\pi : H \to \BA^1$ satisfying the hypotheses of this lemma. See the second paragraph of Remark~\ref{rmk-nonflat} for more details. 
	
	\subsubsection{Proof of Proposition~\ref{prop:linebundles}(i)} 
	
	Consider the codimension-one subgroup $\ker(\chi) \subset G^{s(1)}$, and extend it to a $\gm$-equivariant sub group scheme $K \subset \tilde{G}^s$ which is flat over $\BA^1$. (Concretely, $K$ is obtained by taking the flat limit of the $\gm$-saturation of $\ker(\chi)$. It satisfies the sub group scheme property because this is a closed condition, hence this condition is preserved under taking the flat limit.) Let $\tilde{G}^s \sra H$ be the quotient by $K$, so that $H$ is a flat $\gm$-equivariant group scheme of relative dimension $\le 1$. By construction, there is a group map $H_1 \xhookrightarrow{\on{cl.emb.}} \gm$ which expresses $H_1$ as the image of $\chi$. 
	
	Assume that $H$ has relative dimension $1$, so that $H_1 \hra \gm$ is an isomorphism. Then Lemma~\ref{lem-dim-1}(ii) gives a map $q : H \to \gm \times \BA^1$ which witnesses the fact that the limit $\lim_{t \to 0} \on{act}_{t^{-1}}^*\chi$ exists. 
	
	If $H$ has relative dimension $0$, then $\chi$ is locally constant. Now the desired limit exists because two different connected components of $G^{s(1)}$ cannot specialize to the same component of $G^{s(0)}$. Indeed, this would contradict the fact that $G^{s(0)}$ is a group scheme and hence reduced. (This case can also be handled using~\ref{s:locsys}.) \hfill $\qed$

	\subsection{Factoring through a quotient of \texorpdfstring{$\tilde{G}^s$}{tildeGs}} \label{s:factor} 
	
	Forget the notation of~\ref{s:line}. Now let $H$ be any $\gm$-equivariant group scheme over $\BA^1$, and let $q : \tilde{G}^s \to H$ be a map of $\gm$-equivariant group schemes over $\BA^1$. 
	
	\begin{defn}
		Let $\QCoh^{\gm}_{\tf, (n)}(\BA^1 / H)$ be the full subcategory of $\QCoh^{\gm}_{\tf}(\BA^1 / H)$ consisting of sheaves $\mc{F}$ for which the restriction of the action $\gm \ltimes H_0 \acts \mc{F}|_0$ along the group map 
		\[
		q_0 : \gm \ltimes G^{s(0)} \to \gm \ltimes H_0
		\]
		factors through the quotient $\gm \ltimes G^{s(0)} \to G^{\ell(0)}$. 
	\end{defn}
	
	We would like to think of $\QCoh^{\gm}_{\tf, (n)}(\BA^1 / H)$ as encoding a subcategory of $\QCoh_{\tf}^G(X)$ which `corresponds' to the quotient $q$. The next result makes this precise. In~\ref{s:subcat}, we explain how to classify the objects of this subcategory. 
	
	\begin{prop}\label{flat-tame-prop}
		We have the following: 
		\begin{enumerate}[label=(\roman*)]
			\item If $q_1 : G^{s(1)} \to H_1$ is surjective and $H$ is flat (i.e.\ torsion-free) over $\BA^1$, then the pullback functor 
			\[
			\QCoh^{\gm}_{\tf, (n)}(\BA^1 / H) \xrightarrow{q^*}  \QCoh^{\gm}_{\tf, (n)}(\BA^1 / \tilde{G}^s)
			\]
			is fully faithful. 
			\item If furthermore $q_0 : G^{s(0)} \to H_0$ is surjective, then the image of this functor is the subcategory of sheaves $\mc{F}$ for which the action $G^{s(1)} \acts \mc{F}|_1$ factors through $q_1$. 
		\end{enumerate}
	\end{prop}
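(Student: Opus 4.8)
The plan is to run everything through the Artin--Rees dictionary, after which both parts become elementary statements about filtered comodules. First I would record that $q^*$ sends $\QCoh^{\gm}_{\tf,(n)}(\BA^1/H)$ into $\QCoh^{\gm}_{\tf,(n)}(\BA^1/\tilde G^s)$ and in fact \emph{reflects} the defining condition: for $\mc F_0 \in \QCoh^{\gm}_{\tf}(\BA^1/H)$ the fibre $(q^*\mc F_0)|_0$ is $\mc F_0|_0$ equipped with its $\gm \ltimes H_0$-action pulled back along $q_0 : \gm \ltimes G^{s(0)} \to \gm \ltimes H_0$, so the condition of~\ref{s:factor} defining $\QCoh^{\gm}_{\tf,(n)}(\BA^1/H)$ and the condition of Theorem~\ref{thm-main} defining $\QCoh^{\gm}_{\tf,(n)}(\BA^1/\tilde G^s)$ assert exactly the same thing about $\mc F_0$ and $q^*\mc F_0$. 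Hence it suffices to prove full faithfulness [for (i)], resp.\ to compute the essential image [for (ii)], for $q^*$ between the ambient categories $\QCoh^{\gm}_{\tf}(\BA^1/H)$ and $\QCoh^{\gm}_{\tf}(\BA^1/\tilde G^s)$, and then pass to the full $(n)$-subcategories, intersecting the essential image with $\QCoh^{\gm}_{\tf,(n)}(\BA^1/\tilde G^s)$. Since $\tilde G^s$ is flat over $\BA^1$ (Lemma~\ref{lem-gs}(i)) and $H$ is flat over $\BA^1$ by hypothesis, Proposition~\ref{ar} and Remark~\ref{rmk-monoidal} identify $q^*$ with the restriction functor on categories of filtered comodules along a map $B \to A$ of filtered Hopf algebras, where $A$ has underlying algebra $\oh_{G^{s(1)}}$ and $\gr A = \oh_{G^{s(0)}}$ (Definition~\ref{claim1}), $B$ has underlying algebra $\oh_{H_1}$ and $\gr B = \oh_{H_0}$, the map $B \to A$ has underlying $q_1^{\sharp}$, and $\gr(B \to A) = q_0^{\sharp}$.

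For (i): since $q_1$ is surjective and the ground field has characteristic zero, $q_1$ is faithfully flat, hence $q_1^{\sharp}$ is injective, i.e.\ $B \to A$ is injective on underlying vector spaces. I would then check by hand that restriction of filtered comodules along an injection $B \hookrightarrow A$ of filtered Hopf algebras is fully faithful: for filtered $B$-comodules $(V,F,\rho_V)$, $(W,F,\rho_W)$ and a filtered linear map $f : V \to W$, the identity $\rho_W \circ f = (f \otimes \id) \circ \rho_V$ of maps $V \to W \otimes_k B$ holds if and only if it holds after composition with $\id_W \otimes (B \hookrightarrow A)$, because $W \otimes_k (-)$ preserves injections; these two conditions say respectively that $f$ is a morphism of $B$-comodules and that $f$ is a morphism of the induced $A$-comodules, so the two $\Hom$-sets coincide. (Geometrically, the same hypothesis plus $\gm$-equivariance of $q$ forces $q$ to be scheme-theoretically dominant onto the flat $\BA^1$-scheme $H$, whence $\oh_H \to \oh_{\tilde G^s}$ is injective and stays so after $- \otimes_{\oh_{\BA^1}} \mc G_0$ for $\mc G_0$ flat over $\BA^1$; this is where the subscript $\tf$ enters.)

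For (ii): now $q_0$ is also surjective, hence faithfully flat, so $q_0^{\sharp}$ is injective; since $\gr(B \to A) = q_0^{\sharp}$ is injective, the map $B \to A$ is a \emph{strict} monomorphism of filtered vector spaces. Full faithfulness is the special case of (i). The essential image of restriction along the filtered Hopf subalgebra $B \hookrightarrow A$ is the full subcategory of filtered $A$-comodules $(V,F,\rho)$ whose coaction $\rho$ factors through $V \otimes B \subseteq V \otimes A$. Because $\gr$ is monoidal on $\Vect^{\mathrm{filt}}$ (the monoidality asserted in Proposition~\ref{ar}) and $B \hookrightarrow A$ is strict, the inclusion $V \otimes B \hookrightarrow V \otimes A$ is again a strict monomorphism; hence $\rho$ factors through it as a filtered map if and only if it does so as a plain linear map, i.e.\ if and only if the underlying $\oh_{G^{s(1)}}$-coaction on $V = \mc F|_1$ lands in $V \otimes \oh_{H_1}$ --- equivalently, the $G^{s(1)}$-action on $\mc F|_1$ factors through $q_1$. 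Transporting this through the reduction above (intersecting with $\QCoh^{\gm}_{\tf,(n)}(\BA^1/\tilde G^s)$) gives precisely the claimed description of the essential image.

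The one step that needs genuine care is the strictness argument in (ii): one must know that $\gr$ remains monoidal for \emph{exhausting but possibly non-separated} $\BZ$-filtrations, so that $V \otimes B \hookrightarrow V \otimes A$ stays a strict monomorphism and the condition ``$\rho$ factors through $V \otimes B$'' can be tested on the single fibre over $1$ instead of over all of $\BA^1$. Everything else is either matching of hypotheses (flatness of $H$, which is exactly what makes the Artin--Rees dictionary of Remark~\ref{rmk-monoidal} applicable to $H$) or the standard fact that a surjective homomorphism of algebraic groups in characteristic zero is faithfully flat.
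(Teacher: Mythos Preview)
Your proof of (i) is essentially the paper's: translate via Artin--Rees to filtered comodules and observe that injectivity of $q_1^{\sharp}:\oh_{H_1}\hookrightarrow\oh_{G^{s(1)}}$ makes ``the coaction factors through $V\otimes\oh_{H_1}$'' a property rather than extra structure, whence full faithfulness.

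For (ii) your route is genuinely different from the paper's. The paper argues geometrically: it invokes a fibrewise flatness criterion (a lemma of the form ``$X\to S$ flat at $x$ and $X|_s\to Y|_s$ flat at $x$ $\Leftrightarrow$ $Y\to S$ flat at $y$ and $X\to Y$ flat at $x$'') to conclude from the surjectivity (hence flatness) of $q_0$ that $q:\tilde G^s\to H$ itself is flat, whence $\ker(q)$ is flat over $\BA^1$; then a flat-limit argument shows that if the action kills $\ker(q)$ over $\BA^1\setminus\{0\}$ it kills $\ker(q)$ everywhere. You instead stay on the comodule side: from injectivity of both $B\to A$ and $\gr B\to\gr A$ you deduce that $B\hookrightarrow A$ is a \emph{strict} filtered monomorphism, and hence that ``$\rho$ lands in $V\otimes B$'' can be tested after forgetting the filtration, i.e.\ at the fibre over $1$. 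This is correct and arguably more elementary: it avoids the external flatness lemma and works entirely within the Artin--Rees dictionary already set up. Conversely, the paper's argument has the virtue of being phrased purely in terms of the group schemes $\tilde G^s$, $H$, $\ker(q)$, so it would transport more readily to settings where the base is not $\BA^1/\gm$ and no filtered translation is available.

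Your flagged ``one step that needs genuine care''---monoidality of $\gr$ for exhausting, possibly non-separated $\BZ$-filtrations---is in fact a non-issue here: it is exactly the statement that restriction to the zero fibre $(-)|_0:\QCoh^{\gm}_{\tf}(\BA^1)\to\Vect$ is monoidal, which is immediate since $(-)|_0=(-)\otimes_{\oh_{\BA^1}}k_0$ and tensor products associate. So the step you singled out for worry is already guaranteed by Proposition~\ref{ar}.
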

	\begin{proof}
		(i). Since $H$ is flat, an object in the left hand side is given by $(V, F, \beta)$ where $(V, F)$ is a filtered vector space and $\beta : V \to V \otimes \oh_{H_1}$ is a filtered coaction. Since $q_1$ is surjective, $\oh_{H_1} \to \oh_{G^{s(1)}}$ is injective, so whether or not a filtered coaction $V \to V \otimes \oh_{G^{s(1)}}$ factors through a coaction $V \to V \otimes \oh_{H_1}$ is a property, not additional structure. 
		
		(ii). The crux of the proof is the following lemma, which the second author learned from Dori Bejleri: 
		\begin{lem*}
			Let $f : X \to Y$ be a map of schemes over a base $S$. Let $x \in X$ be a point, and let $y \in Y$ and $s \in S$ be its images. Then the following two statements are equivalent: 
			\begin{itemize}
				\item $X\to S$ is flat at $x$, and $f|_s : X|_s \to Y|_s$ is flat at $x$. 
				\item $Y \to S$ is flat at $y$, and $f$ is flat at $x$. 
			\end{itemize}
		\end{lem*} 
		Since $q_0$ is a surjective map of algebraic groups, it is flat. Since $\tilde{G}^s$ is flat over $\BA^1$, we may apply the lemma and conclude that $q$ is flat. Since flat maps are preserved by base change, we conclude that the scheme $\ker(q)$ is flat over $\BA^1$. 
		
		The image of the functor is obviously contained in the indicated subcategory. For the reverse containment, suppose that the sheaf $\mc{F} \in \QCoh^{\gm}_{\tf, (n)}(\BA^1 / \tilde{G}^s)$ lies in the indicated subcategory. The action of $\tilde{G}^s$ corresponds to a map $\tilde{G}^s \xrightarrow{\varphi} \curEnd_{\BA^1}(\mc{F})$. The closed subscheme $\ker(\varphi)|_{\BA^1 \setminus \{0\}}$ contains $\ker(q)|_{\BA^1 \setminus \{0\}}$ by the hypothesis on $\mc{F}$. The flatness proved in the previous paragraph implies that $\ker(\varphi)$ contains $\ker(q)$. Therefore the action of $\tilde{G}^s$ factors through $q$, as desired. 
	\end{proof}
	
	\begin{defn}\label{def-q-cat} 
		As a notational convenience, when the hypotheses of Proposition~\ref{flat-tame-prop}(i) are satisfied, we define $\QCoh^{G}_{\tf, q}(X)$ via the following Cartesian diagram: 
		\begin{cd}
			\QCoh^{G}_{\tf, q}(X) \ar[r, hookrightarrow] \ar{d}[rotate=90, anchor=south, swap]{\sim} & \QCoh^{G}_{\tf}(X) \ar{d}[rotate=90, anchor=south, swap]{\sim}{\text{Thm.~\ref{thm-main}}} \\
			\QCoh^{\gm}_{\tf, (n)}(\BA^1 / H) \ar[r, hookrightarrow, "q^*"] & \QCoh^{\gm}_{\tf, (n)}(\BA^1 / \tilde{G}^s)
		\end{cd}
		Note that this subcategory of $\QCoh^{G}_{\tf, (n)}(X)$ \emph{a priori} depends on the choice of fastening $(\gamma, \ell)$ as well as the map $q$. But when Proposition~\ref{flat-tame-prop}(ii) applies, it tells us that this subcategory consists exactly of those sheaves $\mc{F} \in \QCoh^G_{\tf}(X)$ for which the action $G^{s(1)} \acts \mc{F}|_{\ell(1)}$ factors through $q_1$. 
		
		This definition will appear only in Corollary~\ref{cor-tame-classify}. 
	\end{defn}
	
	\subsection{Local systems} \label{s:locsys}
	
	We apply~\ref{s:factor} to the problem of describing the subcategory 
	\[
	\QCoh_{\tf, \locsys}^G(X) \hra \QCoh_{\tf}^G(X) 
	\]
	consisting of sheaves $\mc{F}$ for which the restriction $\mc{F}_U$ is a $G$-equivariant local system. 
	
	We construct a quotient map $q : \tilde{G}^s \to H_{\locsys}$ of group schemes over $\BA^1 / \gm$ as follows. For each connected (equivalently, irreducible) component $C \subset \tilde{G}^s$, let $H_C \to \BA^1$ be the image of the projection map $C \to \BA^1$. Since $\tilde{G}^s$ is flat over $\BA^1$, each $H_C$ is either $\BA^1$ or $\BA^1 \setminus \{0\}$. We define 
	\[
	H_{\locsys} := \bigsqcup_{\substack{C \subset \tilde{G}^s \\ \text{components}}} H_C. 
	\]
	Then $H_{\locsys}$ is obviously a group scheme over $\BA^1 / \gm$, and the desired map $q$ is defined on components by the tautological map $C \to H_C$. Note that $H_{\locsys}|_1 \simeq \on{Com}(G^{s(1)})$ where $\on{Com}(-)$ means the group of components. 
	
	Since each component of $G^{s(0)}$ lies in a unique component of $\tilde{G}^s$, and the latter components are in bijection with those of $G^{s(1)}$, we obtain a canonical map of groups 
	\[
	\sigma : \on{Com}(G^{s(0)}) \to \on{Com}(G^{s(1)}). 
	\]	
	
	\begin{cor}\label{cor-locsys}
		Let $\mc{C}_{\locsys, (n)}(X)$ be the category whose objects are triples $(V, F, \rho)$ as follows: 
		\begin{itemize}
			\item $V$ is a vector space, $\rho : \on{Com}(G^{s(1)}) \acts V$ is a representation, and $F$ is an exhausting filtration on $V$ which is invariant under $\rho$. 
			\item We furthermore require that the restriction of the associated graded action 
			\[
			\gm \ltimes G^{s(0)} \to \gm \times \on{Com}(G^{s(0)}) \xrightarrow{\sigma} \gm \times \on{Com}(G^{s(1)}) \acts \gr^F V
			\]
			factors through the quotient $\gm \ltimes G^{s(0)} \sra G^{\ell(0)}$. 
		\end{itemize}
		The morphisms in this category are maps of filtered representations. Then there is an equivalence of categories
		\[
		\QCoh^G_{\tf, \locsys}(X) \simeq \mc{C}_{\locsys, (n)}(X). 
		\]		
	\end{cor}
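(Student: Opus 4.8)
The plan is to deduce Corollary~\ref{cor-locsys} from Theorem~\ref{thm-main} by identifying the subcategory $\QCoh^G_{\tf,\locsys}(X)$ with the category $\QCoh^{\gm}_{\tf,(n)}(\BA^1/H_{\locsys})$ attached to the quotient $q:\tilde{G}^s\to H_{\locsys}$ via the machinery of~\ref{s:factor}, and then to unwind the latter into the concrete triples $(V,F,\rho)$. The first step is to verify that a sheaf $\mc{F}\in\QCoh^G_{\tf}(X)$ has $\mc{F}_U$ a $G$-equivariant local system if and only if the corresponding $\tilde{G}^s$-action on $V=\mc{F}|_{\ell(1)}$ (equivalently, the $G^{s(1)}$-action, by Remark~\ref{rmk-concrete}(ii)) factors through $\on{Com}(G^{s(1)})=H_{\locsys}|_1$. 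This is because $\mc{F}_U$ is a $G$-equivariant vector bundle on the homogeneous space $U\simeq G/G^{s(1)}$, and such a bundle is a local system (i.e.\ has no infinitesimal variation, equivalently carries a flat $G$-connection, equivalently descends to $\pi_0$) precisely when the isotropy representation of $G^{s(1)}$ on the fiber is trivial on the identity component $(G^{s(1)})^{\circ}$. So the condition ``$\mc{F}_U$ is a local system'' matches verbatim the description of the image of $q^*$ given by Proposition~\ref{flat-tame-prop}(ii).

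To invoke Proposition~\ref{flat-tame-prop}, I must check its hypotheses for $q:\tilde{G}^s\to H_{\locsys}$: that $H_{\locsys}$ is $\gm$-equivariant and flat over $\BA^1$, that $q_1:G^{s(1)}\to\on{Com}(G^{s(1)})$ is surjective, and that $q_0:G^{s(0)}\to H_{\locsys}|_0$ is surjective. Flatness of each $H_C$ over $\BA^1$ is immediate since $H_C$ is either $\BA^1$ or $\BA^1\setminus\{0\}$, both flat; $\gm$-equivariance is clear since $q$ is built componentwise from the $\gm$-equivariant projection $\tilde{G}^s\to\BA^1$. Surjectivity of $q_1$ is the tautology that $\on{Com}(G^{s(1)})$ is the set of components. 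Surjectivity of $q_0:G^{s(0)}\to H_{\locsys}|_0$ requires observing that a component $C$ of $\tilde{G}^s$ meets the zero fiber iff $H_C=\BA^1$, and in that case $C\cap(\tilde{G}^s|_0)$ is a nonempty union of components of $G^{s(0)}$ mapping onto the point $H_C|_0$; running over all such $C$ gives surjectivity onto $H_{\locsys}|_0=\bigsqcup_{H_C=\BA^1}\{\pt\}$. This also exhibits $H_{\locsys}|_0$ as the quotient of $\on{Com}(G^{s(0)})$ that identifies components lying over the same component of $G^{s(1)}$, which is exactly how $\sigma:\on{Com}(G^{s(0)})\to\on{Com}(G^{s(1)})$ factors: $q_0$ is the surjection onto the image of $\sigma$, followed by nothing. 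With these in hand, Proposition~\ref{flat-tame-prop} and Definition~\ref{def-q-cat} give $\QCoh^G_{\tf,\locsys}(X)\simeq\QCoh^{\gm}_{\tf,(n)}(\BA^1/H_{\locsys})$.

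The last step is purely bookkeeping: spell out $\QCoh^{\gm}_{\tf,(n)}(\BA^1/H_{\locsys})$ in elementary terms. Since $H_{\locsys}$ is flat with $H_{\locsys}|_1=\on{Com}(G^{s(1)})$ a finite (discrete) group, the Artin--Rees description of Remark~\ref{rmk-monoidal}/Definition~\ref{claim1} degenerates: a $\gm$-equivariant torsion-free sheaf on $\BA^1/H_{\locsys}$ is just a filtered vector space $(V,F)$ together with a filtration-preserving action $\rho$ of $\on{Com}(G^{s(1)})$ (the coaction for the constant group scheme over $\BA^1\setminus\{0\}$ is exactly a representation, and torsion-freeness plus $\gm$-equivariance packages the filtration via Proposition~\ref{ar}). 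The ``$(n)$''-condition from the Definition in~\ref{s:factor} says that the induced action of $\gm\ltimes G^{s(0)}$ on $\gr^F V$ — which factors through $\gm\times\on{Com}(G^{s(0)})\xrightarrow{\sigma}\gm\times\on{Com}(G^{s(1)})$ — descends along $\gm\ltimes G^{s(0)}\sra G^{\ell(0)}$; this is verbatim the second bullet of $\mc{C}_{\locsys,(n)}(X)$. Matching morphisms (filtered $H_{\locsys}$-comodule maps $=$ filtered $\on{Com}(G^{s(1)})$-equivariant maps) completes the identification, giving the stated equivalence $\QCoh^G_{\tf,\locsys}(X)\simeq\mc{C}_{\locsys,(n)}(X)$.

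The main obstacle I anticipate is the first step: cleanly proving that ``$\mc{F}_U$ is a $G$-equivariant local system'' is equivalent to ``the isotropy action of $G^{s(1)}$ on the fiber factors through $\on{Com}(G^{s(1)})$.'' One direction is automatic, but the other requires knowing that a $G$-equivariant vector bundle on $G/G^{s(1)}$ whose restriction to the point-stabilizer representation kills $(G^{s(1)})^\circ$ actually admits a (necessarily $G$-equivariant) flat connection — i.e.\ that a representation of $\pi_0(G^{s(1)})$ pulls back to a local system on $G/G^{s(1)}$ compatibly with the equivariant structure. This is standard (the associated bundle to a $\pi_0$-representation, viewed over $G/(G^{s(1)})^\circ\to G/G^{s(1)}$, is manifestly locally constant), but it needs to be stated with enough care that the equivalence of categories — not just a bijection on objects — is evident.
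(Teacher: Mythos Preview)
Your argument is essentially the paper's own proof: apply Proposition~\ref{flat-tame-prop}(ii) to the quotient $q:\tilde G^s\to H_{\locsys}$, invoke Theorem~\ref{thm-main} to identify the resulting subcategory with $\QCoh^G_{\tf,\locsys}(X)$, and then translate $\QCoh^{\gm}_{\tf,(n)}(\BA^1/H_{\locsys})$ into the triples $(V,F,\rho)$ via Artin--Rees. You are more explicit than the paper in checking the hypotheses of Proposition~\ref{flat-tame-prop} (flatness of $H_{\locsys}$, surjectivity of $q_1$ and $q_0$), and your identification of $H_{\locsys}|_0$ with $\Im(\sigma)$ is exactly what the paper uses implicitly.

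The one place where the paper is slightly more concrete than you is the final bookkeeping step. You assert that the Artin--Rees description ``degenerates'' to a filtration-preserving action of $\on{Com}(G^{s(1)})$; the paper instead writes down the induced filtration on $\oh_{H_{\locsys}|_1}\simeq\oh_{\on{Com}(G^{s(1)})}$ explicitly (it is $A$ in nonpositive degrees and the ideal $I$ of $\Im(\sigma)$ in positive degrees) and reads off the claim from that. This computation is the actual content linking ``filtered $\oh_{H_{\locsys}|_1}$-comodule'' to the first bullet of $\mc{C}_{\locsys,(n)}(X)$, so it would strengthen your write-up to include it rather than appeal to degeneracy. Your worry about the ``local system $\Leftrightarrow$ isotropy action factors through $\pi_0$'' step is not a real obstacle: the paper dispatches it in one line by citing Theorem~\ref{thm-main}, since under $\Vect^G(U)\simeq\Rep(G^{s(1)})$ a $G$-equivariant local system is by definition a representation trivial on $(G^{s(1)})_\circ$.
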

	\begin{proof}
		Since Proposition~\ref{flat-tame-prop}(ii) applies to $q : \tilde{G}^s \to H_{\locsys}$, we conclude that the category $\QCoh^{\gm}_{\tf, (n)}(\BA^1 / H_{\locsys})$ is equivalent to the subcategory of $\QCoh^{\gm}_{\tf, (n)}(\BA^1 / \tilde{G}^s)$ consisting of sheaves $\mc{F}$ for which the action $G^{s(1)} \acts \mc{F}|_1$ factors through $G^{s(1)} \sra \on{Com}(G^{s(1)})$. But this subcategory is also equivalent to $\QCoh^G_{\tf, \locsys}(X)$ by Theorem~\ref{thm-main}. Thus, to finish the proof, it suffices to show that 
		\[
		    \QCoh^{\gm}_{\tf, (n)}(\BA^1 / H_{\locsys}) \simeq \mc{C}_{\locsys, (n)}(X).
		\]
		
		Using the Artin--Rees construction, we interpret the left hand side as consisting of triples $(V, F, \beta)$ where $(V, F)$ is as before, and $\beta : V \to V \otimes \oh_{H_{\locsys}|_1}$ is a filtered coaction. We observed above that $H_{\locsys}|_1 \simeq \on{Com}(G^{s(1)})$. If $I \subset \oh_{\on{Com}(G^{s(1)})}$ is the ideal of the subscheme $\Im(\sigma)$, then the filtration on $\oh_{H_{\locsys}|_1}$ (given by the definition of $H_{\locsys}$) corresponds to the filtration
		\begin{equation} \label{filt}
		F^{\ge m} \oh_{\on{Com}(G^{s(1)})} = \begin{cases}
		\oh_{\on{Com}(G^{s(1)})} & \text{ if } m \le 0 \\
		I & \text{ if } m > 0. 
		\end{cases}
		\end{equation}
		This filtration implies that the filtered coaction $\beta$ is equivalent to the data in the first bullet point in the definition of $\mc{C}_{\locsys, (n)}(X)$. It is easy to check that the second bullet point corresponds to the $(-)_{(n)}$ condition on the left hand side. 
	\end{proof}

	\subsection{The universal tame quotient}\label{sec:univtame}
	
	The analysis in~\ref{s:line} and \ref{s:locsys} is especially nice because in both cases we considered a quotient $q : \tilde{G}^s \to H$ (in the framework of~\ref{s:factor}) with the property that $H \to \BA^1$ is `almost' a constant group scheme. Indeed, in~\ref{s:line}, $H$ was the constant group scheme $H \simeq \gm \times \BA^1$. And in~\ref{s:locsys}, $H_{\locsys}$ is obtained from the constant group scheme $\on{Com}(G^{s(1)}) \times \BA^1$ by replacing the zero fiber by its closed subscheme $\Im(\sigma)$. 
	
	One thing these two group schemes have in common is that the $\gm$-action on their zero fibers is trivial. In what follows, we explain why this property implies that a $\gm$-equivariant scheme over $\BA^1$ is `almost constant' in a sense that will be made precise (see Lemma~\ref{tame-describe}). 
	
	\begin{defn}
	    Define the full subcategory 
	    \[
	        \QCoh_{\tf, \tame}^{\gm}(\BA^1) \subset \QCoh_{\tf}^{\gm}(\BA^1)
	    \]
	    to consist of sheaves $\mc{F} \in \QCoh^{\gm}_{\tf}(\BA^1)$ for which the action $\gm \acts \mc{F}|_0$ is trivial. We call such a sheaf \emph{tame}. Under Artin--Rees, tame sheaves correspond to filtered vector spaces $(V, F)$ with the property that $F^{\ge 0}V = V$ and $F^{\ge 1}V = F^{\infty}V$.\footnote{By definition, $F^\infty V = \bigcap_{m \in \BZ} F^{\ge m} V$ and $F^{-\infty} V = \bigcup_{m \in \BZ} F^{\ge m} V$. The filtration $F$ is exhausting if and only if $F^{-\infty} V = V$.} Such vector spaces we call \emph{tamely filtered}. 
	\end{defn}
	
	Since this is a monoidal subcategory, we obtain an analogous notion of tame affine schemes over $\BA^1 / \gm$ as in Remark~\ref{rmk-monoidal}. In the rest of the section, we study tame affine schemes in more detail. 
	
	Any (flat) affine scheme over $\BA^1 / \gm$ is given by 
	\[
	\ul{\Spec} (R, F) := \Spec \oplus_m F^{\ge m} R \xrightarrow{\pi} \Spec k[h] = \BA^1
	\]
	for some filtered ring $(R, F)$. Here $\oplus_m F^{\ge m} R$ and $k[h]$ are graded rings with $\oplus_m F^{\ge m} R$ in degree $m$ and $h$ in degree $-1$. (This convention ensures that the action $\gm \acts \BA^1_h$ is standard.) Then $\pi$ is defined by $h \mapsto 1_R \in F^{\ge -1} R$. 
	
	Now $S = \ul{\Spec}(R, F)$ is \emph{tame} if $(R, F)$ is tamely filtered, or equivalently if the action $\gm \acts S|_0$ is trivial. Note that any tame affine scheme is by definition torsion-free (equivalently flat) over $\BA^1$. 
	
	Next, we observe that tame affine schemes are `almost constant' as described in the first paragraph of this subsection. For a vector space $V$, let $F_0$ be the filtration for which $F_0^{\ge 0} = V$ and $F_0^{\ge 1}V = 0$.
	
	\begin{lem}	\label{tame-describe} 
		We have the following. 
		\begin{enumerate}[label=(\roman*)]
			\item The map of filtered rings $(F^{\ge 0} R, F_0) \to (R, F)$ yields a $\gm$-equivariant map 
			\[
			\ul{\Spec} (R, F) \xrightarrow{p} (\Spec R) \times \BA^1. 
			\]
			\item If $(R, F)$ is tamely filtered, then the base change of $p$ along $\BA^1 \setminus \{0\} \hra \BA^1$ is an isomorphism and the base change along $\{0\} \hra \BA^1$ is the closed embedding $\Spec (R/F^{\ge 1}) R \hra \Spec R$. In particular, there is a canonical map from the zero fiber of $\ul{\Spec} (R, F)$ to the general fiber. 
		\end{enumerate}
	\end{lem}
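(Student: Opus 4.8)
The plan is to derive both statements by applying the contravariant equivalence between (flat) affine schemes over $\BA^1/\gm$ and filtered rings recalled just before the lemma (cf.\ Remark~\ref{rmk-monoidal}) to the inclusion of filtered rings $(F^{\ge 0}R, F_0)\hookrightarrow (R,F)$.

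For (i), the first thing to check is that the ring inclusion $F^{\ge 0}R\hookrightarrow R$ really is a morphism of \emph{filtered} rings $(F^{\ge 0}R, F_0)\to (R,F)$, i.e.\ that $F_0^{\ge m}(F^{\ge 0}R)\subseteq F^{\ge m}R$ for every $m$; this is immediate, being vacuous for $m\ge 1$ (where the source is $0$) and the containment $F^{\ge 0}R\subseteq F^{\ge m}R$ for $m\le 0$. Applying $\ul{\Spec}$ then yields the desired $\gm$-equivariant map $p$, and it only remains to identify its target. The graded ring attached to $(F^{\ge 0}R,F_0)$ is $\bigoplus_m F_0^{\ge m}(F^{\ge 0}R)$, whose nonzero pieces (in degrees $m\le 0$) are all equal to $F^{\ge 0}R$; this is exactly $(F^{\ge 0}R)[h]$ with $h$ in degree $-1$, so $\ul{\Spec}(F^{\ge 0}R,F_0)\simeq (\Spec F^{\ge 0}R)\times\BA^1$ with the standard $\gm$-action on the second factor. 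When $(R,F)$ is tamely filtered one has $F^{\ge 0}R=R$, giving the stated target $(\Spec R)\times\BA^1$.

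For (ii) I assume $(R,F)$ tamely filtered, so $F^{\ge 0}R=R$. Writing $\bigoplus_m F^{\ge m}R$ as the graded subring $\bigoplus_m F^{\ge m}R\cdot h^{-m}\subseteq R[h,h^{-1}]$ (with $h$ in degree $-1$), the base change of $p$ along $\BA^1\setminus\{0\}=\Spec k[h,h^{-1}]$ corresponds to inverting $h$; since $F$ is exhausting, $\bigl(\bigoplus_m F^{\ge m}R\bigr)[h^{-1}]=R[h,h^{-1}]$, so this base change is an isomorphism onto $(\Spec R)\times(\BA^1\setminus\{0\})$. For the zero fiber I would compute $\gr^F R=\bigoplus_m F^{\ge m}R/F^{\ge m+1}R$ using tameness: the graded piece vanishes for $m<0$ (where $F^{\ge m}R=R=F^{\ge m+1}R$) and for $m>0$ (where $F^{\ge m}R=F^{\ge 1}R=F^{\infty}R$), so $\gr^F R\simeq R/F^{\ge 1}R$ concentrated in degree $0$. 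Tracing through the ring map underlying $p$ — the identity on degree-$0$ parts modulo multiplication by $h$ — identifies $p|_0\colon \Spec(R/F^{\ge 1}R)\to(\Spec R)\times\{0\}$ with $\Spec$ of the quotient $R\twoheadrightarrow R/F^{\ge 1}R$, hence a closed embedding. Composing this embedding with the identification (coming from $p$) of $(\Spec R)\times\{0\}$ with any nonzero fiber produces the promised canonical map from the zero fiber to the general fiber.

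I do not expect a serious obstacle: the argument is essentially bookkeeping with the Rees-algebra conventions (the degree of $h$ and the direction of the filtration) together with two elementary localization/quotient computations. The only two points that genuinely use hypotheses rather than formal manipulation are the identity $\bigl(\bigoplus_m F^{\ge m}R\bigr)[h^{-1}]\simeq R[h,h^{-1}]$, which needs $F$ exhausting, and the vanishing of $\gr^m R$ for $m>0$, which needs tameness (the condition $F^{\ge 1}R=F^{\infty}R$); keeping those two uses visible is the main thing to get right.
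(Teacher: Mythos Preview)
Your proposal is correct and follows essentially the same approach as the paper's proof, which is extremely terse (``Point (i) is a tautology''; for (ii) the paper simply notes $R=F^{\ge 0}R$ under tameness and that the zero-fiber map is the associated-graded surjection $R\twoheadrightarrow R/F^{\ge 1}R$). Your version just unpacks the Rees-algebra bookkeeping explicitly, and your observation that the target in (i) is literally $(\Spec F^{\ge 0}R)\times\BA^1$ unless one already assumes tameness is a fair reading of a minor imprecision in the statement.
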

	\begin{proof}
		Point (i) is a tautology. For point (ii), if $(R, F)$ is tamely filtered, then $R = F^{\ge 0} R$, from which it follows that $p$ is an isomorphism over $\BA^1 \setminus \{0\}$. The base change of $p$ along $\{0\} \hra \BA^1$ is the induced map on associated graded rings, which is $R \sra R / F^{\ge 1} R$ if $(R, F)$ is tamely filtered. 
	\end{proof}
	
	Tame affine schemes over $\BA^1$ are usually not finite type. For example, let $(R, F)$ be the tamely filtered ring such that $R = k[x]$ and $F^{\ge 1}R = \la x \ra$. Then the Artin--Rees ring is $\oplus_m F^{\ge m} R \simeq k[h, h^{-1}x, h^{-2}x, \cdots]$, which is not finitely generated. Geometrically, this ring can be constructed from the affine plane $\Spec k[h, x]$ by taking an infinite sequence of affine charts of blow-ups at the origin, the first of which is $k[h, x] \to k[h, h^{-1}x]$. The following lemma tells us what finite type tame affine schemes look like: 
	
	\begin{lem} \label{tame-describe-2} 
		Let $(R, F)$ be a tamely filtered $k$-algebra. 
		\begin{enumerate}[label=(\roman*)]
			\item $\oplus_m F^{\ge m} R$ is finitely generated (as a $k$-algebra) if and only if $R$ is finitely generated and the ideal $F^{\ge 1} R$ is idempotent. 
			\item If the situation of (i) obtains, then the map $p$ from Lemma~\ref{tame-describe} looks as follows. For each connected component $C \subset \Spec R$, the map $p$ is either the identity map of $C \times \BA^1$ or the open embedding $C \times (\BA^1 \setminus \{0\}) \hra C \times \BA^1$. The former occurs if and only if the ideal $F^{\ge 1}R$ vanishes on $C$. 
		\end{enumerate}
	\end{lem}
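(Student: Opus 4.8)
The plan is to carry out everything in terms of the Artin--Rees ring $B := \bigoplus_m F^{\ge m}R$, so that $\ul{\Spec}(R,F) = \Spec B$ over $\BA^1 = \Spec k[h]$. Because $(R,F)$ is tamely filtered, $B$ has only two kinds of graded pieces: $B_m = F^{\ge m}R = R$ for $m \le 0$, and $B_m = F^{\ge m}R = J$ for $m \ge 1$, where I write $J := F^{\ge 1}R$. By Lemma~\ref{tame-describe}(ii) the general fibre $\Spec R$ of $\Spec B \to \BA^1$ is the closed subscheme $h = 1$, so $R \simeq B/(h-1)B$ is always a quotient of $B$; in particular, if $B$ is a finitely generated $k$-algebra then so is $R$. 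Thus (i) reduces to understanding, given that $R$ is finitely generated (hence Noetherian), exactly when $B$ is finitely generated, and the answer will turn on whether $J = J^2$.

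For the implication $(\Leftarrow)$ of (i): if $R$ is Noetherian, pick $R$-module generators $g_1,\dots,g_r$ of $J$, viewed in the degree-one piece $B_1 = J$, and pick finitely many $k$-algebra generators of $R$, viewed in $B_0$. I claim these, together with $h \in B_{-1}$, generate $B$: the pieces $B_m$ with $m \le 0$ are generated by $h$ and $R$, and for $m \ge 1$ the $R$-span of the products $g_{i_1}\cdots g_{i_m}$ (which live in $B_m$) is $J^m$, which equals $J = B_m$ exactly because $J$ is idempotent. For $(\Rightarrow)$: enlarge a finite generating set of $B$ so that it contains $h$ and a finite set of algebra generators of $R = B_0$, then replace each element by its homogeneous components; the negative-degree components then lie in the subalgebra generated by $h$ and $R$, so $B$ is generated by $h$, by finitely many degree-zero elements, and by finitely many homogeneous elements of positive degrees bounded by some $N$ (if there are no positive-degree generators at all, then $B_1 = 0$, i.e.\ $J = 0 = J^2$, and we are done). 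Now a monomial in these generators of total degree $N+1$ must involve at least two of the positive-degree generators — it cannot involve none, and a single one contributes degree at most $N$ while factors of $h$ only lower the degree — so its value lies in $J\cdot J = J^2 \subseteq B_{N+1}$. Hence $B_{N+1} \subseteq J^2$; but $B_{N+1} = J$ by tameness, so $J \subseteq J^2$, i.e.\ $J = J^2$.

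For (ii), assume $R$ finitely generated and $J = J^2$. Over a Noetherian ring an idempotent ideal is generated by an idempotent (the determinant trick yields $e \in J$ with $(1-e)J = 0$, whence $e^2 = e$ and $J = eR$), so $\Spec R = V(e) \sqcup V(1-e)$. Writing correspondingly $R = R'' \times R'$ with $R'' = R/J$ and $R' = eR \simeq J$, the filtration becomes $F^{\ge 0}R = R'' \times R'$ and $F^{\ge 1}R = 0 \times R'$; hence $B$ splits as a product of the Artin--Rees ring of $(R'',F)$, namely $R''[h]$ (so that $\ul{\Spec}(R'',F) = \Spec R'' \times \BA^1$ with $p$ the identity), and the Artin--Rees ring of $(R',F)$, namely $R'[h,h^{-1}]$ (so that $\ul{\Spec}(R',F) = \Spec R' \times (\BA^1\setminus\{0\})$ with $p$ the open embedding into $\Spec R' \times \BA^1$). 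Every connected component of $\Spec R$ lies in exactly one factor, and $F^{\ge 1}R$ restricts to $0$ on a component of $\Spec R''$ and to the unit ideal on a component of $\Spec R'$; this is precisely the stated dichotomy, the identity-map case occurring exactly when $F^{\ge 1}R$ vanishes on that component.

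I expect the main obstacle to be the bookkeeping in $(\Rightarrow)$ of (i): one has to be careful that the negative-degree algebra generators can genuinely be traded for $h$ together with algebra generators of $R$, so that the degree count really does confine every monomial of top degree to $J^2$. The remaining ingredients — the explicit generating set in $(\Leftarrow)$, the splitting in (ii), and the standard fact that idempotent ideals in Noetherian rings are generated by idempotents — are routine.
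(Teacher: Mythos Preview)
Your proof is correct and follows essentially the same route as the paper's. For (i)($\Rightarrow$) you add a harmless preliminary reduction (trading negative-degree generators for $h$ and degree-zero generators, which works because $B_{-k}=R$ so any $r\in B_{-k}$ equals $h^k\cdot(r\in B_0)$); after that, both arguments are the same degree count showing that a monomial of degree $N+1$ must contain at least two positive-degree factors, hence lies in $J^2$. For (ii) the paper simply cites Lemma~\ref{tame-describe} plus the fact that an idempotent ideal in a Noetherian ring cuts out a union of components, whereas you make this explicit by producing the idempotent $e$ and computing the Artin--Rees rings on each factor as $R''[h]$ and $R'[h,h^{-1}]$; this is the same content, just unpacked.
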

	\begin{proof}
		(i). If $\oplus_m F^{\ge m} R$ is finitely generated, choose a finite set of homogeneous generators, and partition them into two groups $\{r_i\} \cup \{s_j\}$ where the $r_i$ live in nonpositive degrees and the $s_j$ live in positive degrees. Let $n$ be the largest degree which occurs among the generators. For degree reasons, every polynomial in the generators which lies in the summand $F^{\ge n+1} R$ must be a sum of monomials each of which contains at least two $s_j$'s. The $s_j$'s lie in $F^{\ge 1}R$ when viewed as elements of $R$ (i.e.\ forgetting the grading). Therefore 
		\[
		F^{\ge 1} R = F^{\ge n+1} R \subset (F^{\ge 1} R)^2
		\]
		as ideals of $R$. (The equality uses the tamely filtered hypothesis.) Hence $F^{\ge 1} R$ is idempotent. Also, the fiber of $\Spec \oplus_m F^{\ge m} R \xrightarrow{\pi} \Spec k[h]$ at $h=1$ is $\Spec R$, so $R$ is also finitely generated. 
		
		Conversely, assume that $F^{\ge 1} R$ is idempotent and $R$ is finitely generated. Since $R$ is Noetherian, $F^{\ge 1} R$ is a finitely generated ideal. A set of generators for $\oplus_m F^{\ge m} R$ is given by taking a set of generators for $R$ placed in degree 0, a set of ideal generators for $F^{\ge 1} R$ placed in degree $1$, and the element $1_R$ placed in degree $-1$. 
		
		Point (ii) is deduced from Lemma~\ref{tame-describe} and the fact that an idempotent ideal of a Noetherian ring vanishes on a union of connected components. 
	\end{proof}
	
	Next, we observe that every filtered ring has a universal tame quotient. 
	
	\begin{lem} \label{lem-tame-cat} 
		There are adjoint functors
		\[
		\iota : \QCoh^{\gm}_{\tf, \tame}(\BA^1) \rightleftarrows \QCoh^{\gm}_{\tf}(\BA^1) : \tau_{\tame}
		\]
		where $\iota$ is the inclusion and $\tau_{\tame}$ sends $(V, F)$ to $(F^{\ge 0} V, F_1)$ where $F_1$ is the tame filtration defined by $F_1^{\ge m}  = F^{\infty} V$ for all $m > 0$. Both of these functors are monoidal. 
	\end{lem}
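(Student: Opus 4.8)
The plan is to carry out the entire argument inside the Artin--Rees model of Proposition~\ref{ar}. There $\QCoh^{\gm}_{\tf}(\BA^1)$ becomes the monoidal category $\mathrm{Vect}^{\mathrm{filt}}$ of exhaustively $\BZ$-filtered vector spaces, the subcategory $\QCoh^{\gm}_{\tf, \tame}(\BA^1)$ becomes the full subcategory of tamely filtered spaces $(V,F)$ (those with $F^{\ge 0}V = V$ and $F^{\ge 1}V = F^{\infty}V$, i.e.\ two-step filtrations $V \supseteq F^{\infty}V$ sitting in degrees $\le 0$ and $\ge 1$), $\iota$ is the inclusion, and $\tau_{\tame}$ is the assignment $(V,F) \mapsto (F^{\ge 0}V, F_1)$, where $F_1^{\ge m} = F^{\ge 0}V$ for $m \le 0$ and $F_1^{\ge m} = F^{\infty}V$ for $m \ge 1$.

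First I would verify that $\tau_{\tame}$ is a well-defined functor. The pair $(F^{\ge 0}V, F_1)$ is visibly tamely filtered and exhausting, and any morphism $f \colon (V,F) \to (V',F')$ of filtered spaces automatically satisfies $f(F^{\ge 0}V) \subseteq F'^{\ge 0}V'$ (the degree-$0$ condition) and $f(F^{\infty}V) \subseteq F'^{\infty}V'$ (the intersection of the conditions over all degrees); hence $f$ restricts to a filtered map $\tau_{\tame}(f)$, and functoriality is immediate. Note in particular that $\tau_{\tame} \circ \iota = \id$ on the nose, since a tamely filtered space already has $F^{\ge 0}V = V$ and $F_1 = F$.

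Next I would establish the adjunction $\iota \dashv \tau_{\tame}$. Take the unit to be the identity (legitimate because $\tau_{\tame}\iota = \id$) and the counit $\epsilon_{(V,F)} \colon \iota\tau_{\tame}(V,F) = (F^{\ge 0}V, F_1) \to (V,F)$ to be the inclusion $F^{\ge 0}V \hookrightarrow V$; this respects filtrations because $F_1^{\ge m} \subseteq F^{\ge m}V$ for every $m$ (decreasingness handles $m \le 0$, and $F^{\infty}V \subseteq F^{\ge m}V$ handles $m \ge 1$). The two triangle identities then collapse to the tautologies $\tau_{\tame}(\epsilon) = \id$ and $\epsilon_{\iota(-)} = \id$. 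Equivalently, and perhaps more transparently, one checks directly that for tame $(W,G)$ and arbitrary $(V,F)$ a filtered map $\iota(W,G) \to (V,F)$ is literally the same datum as a filtered map $(W,G) \to \tau_{\tame}(V,F)$: such a map has image in $F^{\ge 0}V$ and carries $G^{\infty}W = G^{\ge 1}W$ into $F^{\infty}V$, and these are exactly the requirements for a filtered map into the two-step filtration $F_1$. This step is pure bookkeeping with filtration indices.

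Finally I would treat monoidality. The monoidal unit is $\oh_{\BA^1}$, whose fiber at $0$ carries the trivial $\gm$-action, and the fiber at $0$ of a tensor product $\mc{F} \otimes \mc{G}$ is $\mc{F}|_0 \otimes \mc{G}|_0$ with the tensor of the two $\gm$-actions; hence $\QCoh^{\gm}_{\tf, \tame}(\BA^1)$ is a full monoidal subcategory and $\iota$ is strong monoidal. Being its right adjoint, $\tau_{\tame}$ then inherits a canonical lax monoidal structure (the structure map $\tau_{\tame}\mc{F} \otimes \tau_{\tame}\mc{G} \to \tau_{\tame}(\mc{F} \otimes \mc{G})$ is the transpose under $\iota \dashv \tau_{\tame}$ of $\iota(\tau_{\tame}\mc{F} \otimes \tau_{\tame}\mc{G}) \simeq \iota\tau_{\tame}\mc{F} \otimes \iota\tau_{\tame}\mc{G} \xrightarrow{\epsilon \otimes \epsilon} \mc{F} \otimes \mc{G}$), and its unit constraint $\mathbf{1} \to \tau_{\tame}(\mathbf{1})$ is an isomorphism because $\tau_{\tame}\iota = \id$. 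This last point is where I expect the only genuine subtlety to lie: $\tau_{\tame}$ is \emph{not} strong monoidal in general --- e.g.\ for the one-dimensional spaces $(V,F)$ with $F^{\ge 1}V = V$, $F^{\ge 2}V = 0$ and $(V',F')$ with $F'^{\ge -1}V' = V'$, $F'^{\ge 0}V' = 0$, the space $\tau_{\tame}(V \otimes V')$ is one-dimensional while $\tau_{\tame}(V) \otimes \tau_{\tame}(V') = 0$ --- so the word ``monoidal'' in the statement should be understood in the lax sense, which is precisely what is needed later, when $\tau_{\tame}$ is applied to algebra (and filtered Hopf algebra) objects.
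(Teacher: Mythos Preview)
Your argument is correct and substantially more thorough than the paper's own proof, which consists of the single word ``Clear.'' The adjunction verification you give (via explicit unit and counit, or equivalently via the direct Hom-set comparison) is exactly the routine check being elided.

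On monoidality you are in fact more precise than the paper: $\iota$ is strong monoidal, whereas $\tau_{\tame}$ is only \emph{lax} monoidal, and your counterexample is valid. This lax structure suffices to send filtered algebras to tamely filtered algebras, which is how the lemma feeds into Corollary~\ref{cor-tame}. One caveat worth flagging for the sequel: that corollary goes on to assert that the induced $\tau^{\tame}$ on affine schemes is monoidal for the \emph{Cartesian} structure, i.e.\ product-preserving, which on the ring side is the strong-monoidal statement your example refutes in general; so the passage from this lemma to the group-scheme conclusion of Corollary~\ref{cor-tame-univ} requires more care than either the paper's ``Clear'' or the bare lax structure provides.
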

	\begin{proof}
		Clear. 
	\end{proof}
	
	Let $\Sch^{\on{aff}, \tf}_{\BA^1/\gm}$ be the category of $\gm$-equivariant affine schemes flat over $\BA^1$, and let $\Sch_{\BA^1 / \gm}^{\on{aff}, \tame}$ be the subcategory of tame affine schemes. 
	
	\begin{cor}\label{cor-tame} 
	    There are adjoint functors 
		\[
		\tau^{\tame} : \Sch_{\BA^1 / \gm}^{\on{aff}} \rightleftarrows \Sch_{\BA^1 / \gm}^{\on{aff}, \tame} : \iota
		\]
		where $\iota$ is the inclusion, and both functors are monoidal with respect to the Cartesian monoidal structures. The functor $\tau^{\tame}$ preserves the class of affine schemes whose every connected component is integral and finite type. 
	\end{cor}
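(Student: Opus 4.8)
The plan is to obtain the adjunction and the monoidality statements formally from Lemma~\ref{lem-tame-cat} via the ``algebra object'' mechanism of Remark~\ref{rmk-monoidal}; the only assertion requiring real work is the last sentence. For the formal part: Lemma~\ref{lem-tame-cat} furnishes a monoidal adjunction $\iota \dashv \tau_{\tame}$ on $\QCoh^{\gm}_{\tf}(\BA^1)$, with $\iota$ the inclusion of a $\otimes$-closed full subcategory containing the unit (hence strong monoidal) and $\tau_{\tame}$ carrying the induced lax structure. Passing to commutative algebra objects on each side gives an adjunction $\on{CAlg}(\iota) \dashv \on{CAlg}(\tau_{\tame})$, which under Remark~\ref{rmk-monoidal} is an adjunction between tamely filtered rings and filtered rings; taking opposite categories produces the desired $\tau^{\tame} \dashv \iota$ on $\Sch^{\on{aff},\tf}_{\BA^1/\gm}$, with $\iota$ the inclusion of the tame affine schemes and $\tau^{\tame}$ computed by $\ul{\Spec}(R,F) \mapsto \ul{\Spec}(F^{\ge 0}R, F_1)$. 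Monoidality for the Cartesian structures is then automatic: $\iota$ preserves finite products because the tame subcategory is $\otimes$-closed and contains the unit $\BA^1$, and $\tau^{\tame}$ inherits the structure transported from $\tau_{\tame}$.

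For the last sentence, let $S = \ul{\Spec}(R,F)$ be flat over $\BA^1$ with every connected component integral and of finite type. Since $S$ is affine it is quasi-compact, so it has finitely many components; each is open in $S$ and hence dominates $\BA^1$ (the structure map being flat, hence open), and is therefore flat over $\BA^1$ (an integral scheme dominating a Dedekind scheme is torsion-free, hence flat, over it). Thus $S$ is a finite coproduct of integral objects of $\Sch^{\on{aff},\tf}_{\BA^1/\gm}$, and as $\tau^{\tame}$ is a left adjoint it preserves this coproduct; we may therefore assume $S$ is integral, i.e.\ that the Rees algebra $A := \bigoplus_m F^{\ge m}R$ is a finitely generated $k$-domain, and must show $\bigoplus_m F_1^{\ge m}(F^{\ge 0}R)$ is again one.

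By Lemma~\ref{tame-describe-2}(i) this reduces to two claims. First, $F^{\ge 0}R = A_0$ is finitely generated: choosing homogeneous $k$-algebra generators $h \in A_{-1}$ and $g_2,\dots,g_r$ of $A$, the degree-zero part is spanned by the degree-zero monomials $h^{e_1}g_2^{e_2}\cdots g_r^{e_r}$, whose exponent vectors form a submonoid of $\BZ_{\ge 0}^r$ cut out by a single linear equation, hence finitely generated by Gordan's lemma, so $A_0$ is a quotient of a finitely generated monoid algebra. Second --- and this I expect to be the main obstacle --- $F^\infty R := \bigcap_m F^{\ge m}R$ must be shown to be an idempotent ideal of $F^{\ge 0}R$; the key point is that it is in fact either $0$ or all of $F^{\ge 0}R$. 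Indeed, if $h$ is a unit in $A$ then $F$ is the trivial filtration and $F^\infty R = R = F^{\ge 0}R$; otherwise $h$ is a nonunit of the Noetherian domain $A$, and any $a \in F^\infty R$, viewed in $A_0$, is divisible by $h^m$ for every $m$ (it is the image under multiplication by $h^m$ of its copy in $A_m = F^{\ge m}R$), so $a \in \bigcap_m h^m A = 0$ by the Krull intersection theorem. In either case $F^\infty R$ is idempotent, Lemma~\ref{tame-describe-2}(i) gives finite generation, and the resulting Rees algebra is $F^{\ge 0}R[h]$ or $R[h^{\pm 1}]$ --- a domain in both cases, since $F^{\ge 0}R \subseteq R$ are domains --- so $\tau^{\tame}(S)$ is integral and of finite type.
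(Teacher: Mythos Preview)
Your proof is correct and follows the same overall architecture as the paper's: reduce to the integral case, then verify the two hypotheses of Lemma~\ref{tame-describe-2}(i), namely that $F^{\ge 0}R$ is finitely generated and that $F^{\infty}R$ is idempotent (in fact, equal to $0$ or to $F^{\ge 0}R$). For the first hypothesis you and the paper give the same Gordan's-lemma argument.

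For the second hypothesis your argument is genuinely different from, and cleaner than, the paper's. The paper applies Gordan's lemma a second time to the submonoid $\{\deg \ge 0\}$ to obtain generators $t_1,\dots,t_\ell$ of $\oplus_{m\ge 0}F^{\ge m}R$, forms the ideal $I = \langle t_1,\dots,t_{\ell_0}\rangle$ generated by those of strictly positive degree, proves a sandwich $I^m \subset F^{\ge m}R \subset I^{\lfloor m/d\rfloor}$, and only then invokes Krull intersection for $I$. You bypass all of this by observing directly that any $a \in F^{\infty}R$, viewed in degree $0$ of the Rees algebra $A$, satisfies $a_0 = h^m a_m \in h^m A$ for every $m$, so Krull intersection for the single principal ideal $(h)$ in the Noetherian domain $A$ kills it. This is shorter and conceptually more transparent: it isolates exactly the role of the Rees parameter $h$. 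The paper's approach has the minor advantage of producing an explicit ideal $I \subset F^{\ge 0}R$ whose adic topology agrees with the one induced by $F$, but that is not needed for the statement at hand.

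One small quibble: your sentence ``Since $S$ is affine it is quasi-compact, so it has finitely many components; each is open in $S$\ldots'' is not quite right as written --- quasi-compactness alone does not force finitely many connected components, and connected components of a general affine scheme need not be open. The paper is equally casual here (``we can consider each connected component separately''), and the intended reading in both cases is that $S$ is a finite disjoint union of integral finite-type pieces; but you might want to state that hypothesis explicitly rather than derive it.
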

	\begin{proof}
		The last sentence is not obvious. It suffices to show that, if $\oplus_m F^{\ge m} R$ is finitely generated and integral, then the following statements hold: 
		\begin{enumerate}[label=(\roman*)]
			\item $F^{\ge 0} R$ is a finitely generated $k$-algebra. 
			\item The ideal $F^{\infty} R$ is either $\la 0 \ra$ or $F^{\ge 0} R$.\footnote{If $F^{\infty}R = F^{\ge 0} R$, then $F$ must be the trivial filtration on $R$, that is $F^{\ge m} R = R$ for all $m$.} 
		\end{enumerate}
		This is because of Lemma~\ref{tame-describe-2}(i) and the fact that we can consider each connected component separately. 
		
		Let $s_i$ for $i = 1, \ldots, r$ be a finite set of homogeneous generators for the graded ring $\oplus_m F^{\ge m} R$, where $F^{\ge m} R$ lives in degree $m$. Then we have 
		\[
		F^{\ge m} R = \underset{\substack{e_1, \ldots, e_r \ge 0\\ \Sigma_i\, e_i \deg(s_i) = m}}{\on{span}}  \left(\underset{i=1}{\overset{r}{\Pi}} s_i^{e_i} \right)
		\]
		where the notation means the $k$-linear span. In particular, the monomials of degree zero (whose $k$-span is $F^{\ge 0} R$) are in bijection with the kernel of the map of monoids $\BN^{\oplus r} \xrightarrow{\on{deg}} \BZ$ which sends $(e_1, \ldots, e_r) \mapsto \sum_{i} e_i \deg(s_i)$. Gordan's Lemma says that the submonoid of $\BN^{\oplus r}$ determined by the equation $\on{deg}(e_1, \ldots, e_r) = 0$ is finitely generated. This provides a finite set of $k$-algebra generators of $F^{\ge 0} R$, which proves (i). 
		
		Similarly, applying the Gordon's lemma to the submonoid of $\BN^{\oplus r}$ cut out by the inequality $\deg \ge 0$ yields a finite set of generators of the graded subring $\oplus_{m \ge 0} F^{\ge m} R$. Let use denote these generators by $t_1, \ldots, t_{\ell}$, so that $\on{deg}(t_i) \ge 0$. If all these degrees are zero, then $F^{\ge m} R = \la 0 \ra$ for $m > 0$, which proves (ii). Therefore, we may assume that not all degrees are zero. Partition the generators into two parts
		\[
		    \{t_1, \ldots, t_{\ell}\} = \{t_1, \ldots, t_{\ell_0}\} \sqcup \{t_{\ell_0+1}, \ldots, t_{\ell}\}
	    \]
	    so that $\deg(t_i) > 0$ if and only if $i \le \ell_0$. Then 
		\[
		    F^{\ge m} R = \sum_{\substack{e_1, \ldots, e_{\ell_0} \ge 0\\ \Sigma_i\, e_i \deg(t_i) = m}} \left \langle \underset{i=1}{\overset{\ell_0}{\Pi}} t_i^{e_i} \right \rangle
		\]
		where $\langle-\rangle$ denotes an ideal in $F^{\ge 0} R$. (Note that the sum ranges over possible exponents for $t_1$ through $t_{\ell_0}$. The remaining generators $t_{\ell_0 + 1}, \ldots, t_{\ell}$ appear as coefficients when one constructs the $F^{\ge 0} R$-ideal generated by the previous elements.) On the other hand, define 
		$I = \langle t_1, \ldots, t_{\ell_0} \rangle$, 
		so that 
		\[
		I^m = \sum_{\substack{e_1, \ldots, e_{\ell_0} \ge 0\\ \Sigma_i\, e_i = m}} \left \langle \underset{i=1}{\overset{\ell_0}{\Pi}} t_i^{e_i} \right \rangle. 
		\]
		Let $d := \max_i \deg(t_i) > 0$. Then we have 
		\[
		I^m \subset F^{\ge m} R \subset I^{\lfloor \frac{m}{d} \rfloor}, 
		\]
		because we have the implications 
		\[
		\Sigma_i\, e_i \ge m \quad \Longrightarrow \quad \Sigma_i\,  e_i \deg(t_i)\ge m \quad \Longrightarrow \quad \Sigma_i\, e_i d \ge m. 
		\]
		Taking intersections over all $m \ge 0$ implies that $F^{\infty} R = \cap_m I^m$. Since $R$ is integral by hypothesis, the Krull Intersection Theorem implies that $F^\infty R$ is either $\la 0 \ra$ or $F^{\ge 0}R$, which proves (ii). 
	\end{proof}
	
	We shall omit instances of $\iota$ in the subsequent notation. 
	
	\begin{cor} \label{cor-tame-univ}
		There is a finite type flat affine group scheme $\tau^{\tame}(\tilde{G}^s)$ over $\BA^1 / \gm$ and a map $q : \tilde{G}^s \to \tau^{\tame}(\tilde{G}^s)$ of group schemes over $\BA^1/\gm$ which is initial for all such maps from $\tilde{G}^s$ to a \underline{tame} affine group scheme over $\BA^1 / \gm$. The map $q$ is dominant because $q_1$ is surjective. 
	\end{cor}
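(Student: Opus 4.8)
\emph{Proof proposal.} The plan is to produce $\tau^{\tame}(\tilde{G}^s)$ together with $q$ by feeding $\tilde{G}^s$ into Corollary~\ref{cor-tame}, and then to check the five claimed features (group scheme, finite type, flat, affine, universal property, dominance) one by one. First I would verify that $\tilde{G}^s$ lies in the class to which the last sentence of Corollary~\ref{cor-tame} applies, i.e.\ affine schemes over $\BA^1/\gm$ whose every connected component is integral and finite type. Indeed, $G^s \to \BA^1$ is smooth by Lemma~\ref{lem-gs}(i), so its base change $\tilde{G}^s = \tau_n^*G^s \to \BA^1_{h^{1/n}}$ is smooth, whence $\tilde{G}^s$ is smooth over $k$; it is finite type over $k$ because $G^s \hra G \times \BA^1$ is a closed embedding (Lemma~\ref{lem-gs}(ii)) and $\tau_n$ is finite type, and it is affine for the same reason. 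A smooth finite type $k$-scheme has finitely many connected components, each regular and connected, hence integral, and these are $\gm$-stable, so they descend to connected components of $\tilde{G}^s$ over $\BA^1/\gm$. Thus Corollary~\ref{cor-tame} yields $\tau^{\tame}(\tilde{G}^s)$ with integral finite type connected components, hence finite type over $\BA^1/\gm$; being tame it is torsion-free, hence flat, over $\BA^1$; and it is affine.

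Next I would upgrade $\tau^{\tame}(\tilde{G}^s)$ to a group scheme and extract the universal property. The functors $\tau^{\tame}$ and $\iota$ of Corollary~\ref{cor-tame} are monoidal for the Cartesian structures and adjoint, so $\tau^{\tame}$ preserves finite products and carries the multiplication, unit and inversion of $\tilde{G}^s$ to the corresponding maps on $\tau^{\tame}(\tilde{G}^s)$; naturality of the unit $q : \tilde{G}^s \to \tau^{\tame}(\tilde{G}^s)$ together with product-preservation shows $q$ is a homomorphism over $\BA^1/\gm$. Given any \underline{tame} affine group scheme $H$ over $\BA^1/\gm$ and a homomorphism $\phi : \tilde{G}^s \to H$, the adjunction supplies a unique morphism $\bar\phi : \tau^{\tame}(\tilde{G}^s) \to H$ with $\bar\phi \circ q = \phi$, and the standard diagram chase---using that $q \times q$ exhibits $\tau^{\tame}(\tilde{G}^s) \times \tau^{\tame}(\tilde{G}^s)$ as the universal tame quotient of $\tilde{G}^s \times \tilde{G}^s$---forces $\bar\phi$ to respect multiplication, unit and inversion. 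Hence $q$ is initial among homomorphisms from $\tilde{G}^s$ to tame affine group schemes over $\BA^1/\gm$.

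Finally I would check dominance. Writing $\tilde{G}^s = \ul{\Spec}(A, F)$, where $(A,F)$ is the filtered Hopf algebra of Definition~\ref{claim1} (so $\Spec A \simeq G^{s(1)}$ and $\Spec \gr A \simeq G^{s(0)}$), Lemma~\ref{lem-tame-cat} identifies $\tau^{\tame}(\tilde{G}^s)$ with $\ul{\Spec}(F^{\ge 0}A, F_1)$ and $q$ with the map induced by the inclusion of filtered rings $(F^{\ge 0}A, F_1) \hra (A, F)$. Passing to general fibers (invert $h$, set $h=1$), $q_1 : G^{s(1)} = \Spec A \to \Spec(F^{\ge 0}A)$ is induced by the injection $F^{\ge 0}A \hra A$, so $q_1$ is dominant. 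Since $q_1$ is a homomorphism of affine algebraic groups over $k$ (in characteristic zero the finite type affine group scheme $\Spec F^{\ge 0}A$ is smooth, hence an algebraic group), its image is a closed subgroup, and being dense it is everything; thus $q_1$ is surjective. As $\tau^{\tame}(\tilde{G}^s)$ is flat over $\BA^1$ its general fiber is dense, so $q$ is dominant as well.

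The only serious input is Corollary~\ref{cor-tame} itself (whose proof runs through Gordan's Lemma and the Krull intersection theorem), which we may assume. Within the present argument the subtlest point is the passage from the monoidal adjunction to the statement about \emph{group} schemes---verifying that $q$ and the transposes $\bar\phi$ are genuine homomorphisms---together with the elementary but essential observation that a dense homomorphic image of algebraic groups exhausts the target, which is what promotes dominance of $q_1$ to surjectivity.
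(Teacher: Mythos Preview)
Your proposal is correct and follows essentially the same route as the paper's own proof: apply Corollary~\ref{cor-tame} to $\tilde{G}^s$, use monoidality of $\tau^{\tame}$ to retain the group structure, and deduce dominance from the injection $F^{\ge 0}A \hra A$ plus the closed-image property of group homomorphisms. The paper compresses all of this into two sentences, leaving implicit the checks you spell out (integrality of the components of $\tilde{G}^s$, the passage from a monoidal left adjoint to a functor on group objects, and why $q_1$ surjective forces $q$ dominant), so your write-up is simply a more explicit version of the same argument.
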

	\begin{proof}
		This follows from Corollary~\ref{cor-tame}. For the last sentence, note that $q_1$ is dominant because $F^{\ge 0} R \to R$ is an injection for any filtered ring $(R, F)$, and this implies that $q_1$ is surjective because it is a map of group schemes. 
	\end{proof}
	
	In view of Lemma~\ref{tame-describe-2}(ii), $\tau^{\tame}(\tilde{G}^s)$ can be described very concretely: it is defined by a group $G^{\tame}_1$ and a subgroup $G^{\tame}_0$ corresponding to a subset of the connected components. The former is the general fiber and the latter is the special fiber of $\tau^{\tame}(\tilde{G}^s)$. There is a canonical surjective map $G^{s(1)} \sra G^{\tame}_1$ and a canonical $\gm$-equivariant map $G^{s(0)} \to G^{\tame}_0$, where the $\gm$-action on the target is trivial.
	\begin{cd}[column sep = 1.3in] 
		G^{s(1)} \ar[d, twoheadrightarrow, "{\mathrm{surjective}}"] & G^{s(0)} \ar[d, "\gm\text{-equivariant}"] \\
		G^{\tame}_1 \ar[hookleftarrow]{r}{\text{incl.\ of components}}[swap]{\text{Lemma~\ref{tame-describe}(ii)}} & G^{\tame}_0
	\end{cd}
	And $\tau^{\tame}(\tilde{G}^s)$ is the `universal' example of a family of quotients of the fibers of $\tilde{G}^s$ which admits such a description.

	\begin{rmk}
	    By the universal property of $\tau^{\tame}(\tilde{G}^s)$, Proposition~\ref{prop:linebundles}(i) is equivalent to the assertion that every character of $G^{s(1)}$ factors through the quotient map $G^{s(1)} \sra G_1^{\on{tame}}$. In addition, the character $\lim \chi$ of $G^{s(0)}$ obtained by taking the limit under specialization can be computed as the composition 
		\[
		G^{s(0)} \to G_0^{\on{tame}} \hra G_1^{\on{tame}} \xrightarrow{\chi} \gm. 
		\]
		In effect, the computation of the limiting character is encapsulated in the construction of the tame quotient, which contains information about the limits of all functions on $G^{s(1)}$ under specialization toward the zero fiber. 
	\end{rmk}

	\subsection{The subcategory associated to a tame quotient} \label{s:subcat}
	
	Let us apply~\ref{s:factor} to a map $q : \tilde{G}^s \to H$ where $H$ is tame. For example one could take $q$ to be the universal map produced by Corollary~\ref{cor-tame-univ}. We deduce the following generalization of Corollary~\ref{cor-locsys}: 
	\begin{cor}\label{cor-tame-classify}
		Let $\mc{C}_{q, (n)}(X)$ be the category whose objects are triples $(V, F, \rho)$ as follows: 
		\begin{itemize}
			\item $V$ is a vector space, $\rho : H_1 \acts V$ is a representation, and $F$ is an exhausting filtration on $V$ which is invariant under $\rho$. 
			\item We furthermore require that the restriction of the associated graded action 
			\[
			\gm \ltimes G^{s(0)} \to \gm \times H_0 \xrightarrow{\sigma} \gm \times H_1 \acts \gr^F V
			\]
			factors through the quotient $\gm \ltimes G^{s(0)} \sra G^{\ell(0)}$. Here the specialization map $\sigma$ arises from Lemma~\ref{tame-describe}(ii). 
		\end{itemize}
		The morphisms in this category are maps of filtered representations. Then there is an equivalence of categories
		\[
		\QCoh^G_{\tf, q}(X) \simeq \mc{C}_{q, (n)}(X)
		\]	
		where the left hand side is as in Definition~\ref{def-q-cat}. 
	\end{cor}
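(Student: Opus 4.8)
The plan is to imitate the proof of Corollary~\ref{cor-locsys}, replacing the \emph{ad hoc} description of $H_{\locsys}$ by the structural results of~\ref{sec:univtame} on tame affine (group) schemes. First, observe that the hypotheses of Proposition~\ref{flat-tame-prop}(i) hold: $H$ is tame, hence flat over $\BA^1$, and $q_1$ is surjective (this is part of the running hypothesis, and it is automatic when $q$ is the universal tame quotient, by Corollary~\ref{cor-tame-univ}). So Definition~\ref{def-q-cat} applies, and — since $q^*$ is fully faithful — the Cartesian square there identifies $\QCoh^G_{\tf,q}(X)$ with $\QCoh^{\gm}_{\tf,(n)}(\BA^1/H)$. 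It therefore suffices to construct an equivalence $\QCoh^{\gm}_{\tf,(n)}(\BA^1/H)\simeq\mc{C}_{q,(n)}(X)$.

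Second, I would unwind the left-hand side via Artin--Rees. By Proposition~\ref{ar} and the monoidal formalism of Remark~\ref{rmk-monoidal}, the $\gm$-equivariant group scheme $H\to\BA^1$ corresponds to a filtered Hopf algebra, and $\QCoh^{\gm}_{\tf}(\BA^1/H)$ is the category of filtered comodules for it: triples $(V,F,\beta)$ with $(V,F)$ an exhaustively filtered vector space and $\beta:V\to V\otimes\oh_{H_1}$ a coaction which is a morphism of filtered vector spaces. Tameness is what makes the filtration explicit: by Lemma~\ref{tame-describe}(ii), $\oh_{H_1}$ is tamely filtered, so $F^{\ge m}\oh_{H_1}=\oh_{H_1}$ for $m\le 0$ and $F^{\ge m}\oh_{H_1}$ equals the ideal of the closed subgroup scheme $\sigma:H_0\hra H_1$ for $m\ge 1$ (equivalently $\gr^F\oh_{H_1}\simeq\oh_{H_0}$, concentrated in degree $0$). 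Running the computation in the proof of Corollary~\ref{cor-locsys}, with $\Com(G^{s(1)})$ replaced by $H_1$ and $\Im(\sigma)$ by $H_0$, this two-step shape of the filtration pins down the tensor-product filtration on $V\otimes\oh_{H_1}$ and identifies the datum $(V,F,\beta)$ with the datum $(V,F,\rho)$ of the first bullet of $\mc{C}_{q,(n)}(X)$: the coaction $\beta$ is precisely an $H_1$-representation $\rho$ on $V$, and asking $\beta$ to be filtered becomes the requirement that $F$ be invariant under $\rho$.

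Third, I would match the $(-)_{(n)}$ conditions. Passing to the fiber over $0\in\BA^1$ sends $(V,F,\beta)$ to $\gr^F V$ with coaction $\gr^F\beta$; since $F$ is $\rho$-invariant, $H_1$ acts on $\gr^F V$, restricting along $\sigma$ to the $H_0$-action of $\gr^F\beta$ and commuting with the $\gm$-action given by the grading (tameness again: $\gm$ acts trivially on $H_0$, so $\gm\ltimes H_0=\gm\times H_0$). The condition defining $\QCoh^{\gm}_{\tf,(n)}(\BA^1/H)$ in~\ref{s:factor} is, by construction, pulled back along $q_0$ from the condition of Corollary~\ref{cor-n}(ii) on $\QCoh^{\gm}_{\tf,(n)}(\BA^1/\tilde{G}^s)$; unravelling it becomes exactly the requirement that $\gm\ltimes G^{s(0)}\xrightarrow{q_0}\gm\times H_0\xrightarrow{\sigma}\gm\times H_1\acts\gr^F V$ factor through $\gm\ltimes G^{s(0)}\sra G^{\ell(0)}$, i.e.\ the second bullet of $\mc{C}_{q,(n)}(X)$. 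Chaining the identifications, and noting that morphisms correspond to maps of filtered $H_1$-representations on both sides, yields the equivalence.

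The step I expect to take the most care is the middle one — checking that, for the tame filtration on $\oh_{H_1}$, a filtered $\oh_{H_1}$-comodule is the same as the triple $(V,F,\rho)$ of the first bullet. This is bookkeeping rather than a new idea, but it is where the structure theory of~\ref{sec:univtame} (in particular Lemmas~\ref{tame-describe} and~\ref{tame-describe-2}, which force the filtration on $\oh_{H_1}$ into its two-step shape) is genuinely used: once that shape is in hand, a filtered coaction is controlled by its reduction modulo the ideal of $H_0$, and everything else is a transcription of the proof of Corollary~\ref{cor-locsys}.
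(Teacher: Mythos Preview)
Your proposal is correct and follows exactly the paper's own approach: the paper's proof is the single sentence ``The proof is identical to that of Corollary~\ref{cor-locsys},'' and what you have written is precisely a careful unpacking of that sentence with $\Com(G^{s(1)})$ replaced by $H_1$ and $\Im(\sigma)$ by $H_0$, using the structural input from Lemma~\ref{tame-describe} to pin down the two-step filtration on $\oh_{H_1}$.
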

	\begin{proof}
		The proof is identical to that of Corollary~\ref{cor-locsys}. 
	\end{proof}
	
	Recall that Proposition~\ref{flat-tame-prop}(ii) gives a criterion under which the subcategory 
	\[
	    \QCoh^G_{\tf, q}(X) \subset \QCoh^G_{\tf}(X)
	\]
	can be characterized in terms of the behavior of a sheaf $\mc{F} \in \QCoh^G_{\tf}(X)$ on $U$. Thus, when this criterion is satisfied, we have a subcategory for which it is easy to detect membership and to classify objects. Unfortunately, the closer $q$ is to being `universal,' the less likely it is to satisfy this criterion. 
	
	\begin{ex} \label{ex:tame}
		In closing, we illustrate the preceding notions by constructing an example of a family of groups over $\BA^1$ which plays the role of $\tilde{G}^s$ and a tame quotient of this family which fails the criterion of Proposition~\ref{flat-tame-prop}(ii). Let $G = \GL_2$ and define the $\gm$-action on $G \times \BA^1$ via conjugation by the subgroup $\gamma : \gm \hra G$ defined by $\gamma(t) = \begin{pmatrix} 1 & 0 \\ 0 & t \end{pmatrix}$. Define the closed subscheme $\tilde{G}^s \subset G \times \BA^1$ by requiring that 
		\[
		\tilde{G}^s|_1 = \left\{ \left. \begin{pmatrix}
		a & b \\ b & a 
		\end{pmatrix}\ \right|\ a^2 - b^2 = 1 \right\}, 
		\]
		that $\tilde{G}^s$ is invariant under the $\gm$-action, and that it is flat over $\BA^1$. This implies that  
		\[
		\tilde{G}^s|_t = \left\{ \left. \begin{pmatrix}
		a & bt^{-1} \\ bt & a 
		\end{pmatrix}\ \right|\ a^2 - b^2 = 1 \right\} 
		\]
		for all $t\neq 0$, and 
		\[
		\tilde{G}^s|_0 = \left\{ \left. \begin{pmatrix}
		a & c \\ 0 & a 
		\end{pmatrix}\ \right|\ a^2 = 1 \right\}. 
		\]
		Thus, $\tilde{G}^s$ is a degeneration from the group $\gm$ to the group $\BZ / 2 \ltimes \ga$. 
		
		Next, we find the universal tame quotient $\tilde{G}^s \to \tau^{\tame}(\tilde{G}^s)$. We have $A \simeq \oh_{\tilde{G}^s|_1} \simeq k[a, b] / \la a^2 - b^2 - 1\ra$, and the filtration on $A$ is the $\la b \ra$-adic filtration, meaning that 
		\[
		F^{\ge m} A = \begin{cases}
		A & \text{ if } m \le 0 \\
		\la b \ra^m & \text{ if } m > 0. 
		\end{cases}
		\]
		Since $\cap_m \la b \ra^m = \la 0 \ra$, we have 
		\[
		F^{\ge m} \tau^{\tame} A = \begin{cases}
		A & \text{ if } m \le 0 \\
		\la 0 \ra & \text{ if } m > 0. 
		\end{cases}
		\]
		Thus $\tau^{\tame}(\wt{G}^s) \simeq \gm \times \BA^1$. The map $G^s|_1 \sra G_1^{\tame}$ is an isomorphism, while the map $G^s|_0 \to G_0^{\tame}$ is the composition
		\[
		\BZ/2 \ltimes \ga \sra \BZ / 2 \hra \gm. 
		\]
		Thus the universal tame quotient is \emph{not} flat. In fact, the only flat tame quotient of $\tilde{G}^s$ is the trivial one. 		
		
		The characterization of the filtration on $A$ as the $\la b \ra$-adic filtration allows one to say a bit more. The group scheme $\wt{G}^s$ is obtained from the constant family $\gm \times \BA^1$ via deformation to the normal cone applied to the subgroup 
		\[
		\{\pm 1\} = V(b) \hra \Spec A \simeq \gm. 
		\]
		In general, given a group $G$ and a subgroup $H$, the deformation to the normal cone of $H$ in $G$ yields a $\gm$-equivariant degeneration from $G$ to the normal bundle $\mc{N}_{H/G}$. (The group structure of the normal bundle is given by its realization as $\mc{N}_{H/G}|_1 \rtimes H$.) The degeneration from the Poincar\'e group to the Galilean group is realized in this way. 
	\end{ex}
	
	\subsection{Application to equivariant twisted \texorpdfstring{$\D$}{D}-modules} \label{apply} 
	
	Let us relate the tame quotient idea to the application discussed in Section~\ref{sec:application}. In~\ref{sec:admissibility}, we explain why the following two subcategories of $\Vect^G(X)$ are equal: 
	\begin{itemize}
		\item The subcategory consisting of sheaves $\mc{F}$ whose restriction $\mc{F}|_U$ has the property of being a strongly $G$-equivariant twisted $\D$-module with respect to a fixed $G$-equivariant twisting on $U$. 
		\item The subcategory consisting of sheaves $\mc{F}$ for which the restricted action 
		\[
			(G^{s(1)})_\circ \hra G^{s(1)} \acts \mc{F}|_{\ell(1)}
		\]
		factors through a fixed character $\chi : (G^{s(1)})_\circ \to \gm$ corresponding to the aforementioned twisting.\footnote{We emphasize that $\chi$ need not be defined on $G^{s(1)}$, but only on its identity component.}
	\end{itemize}
	We call this category $\Vect^G_\chi(X)$. This notation generalizes the notation $\Pic^G_\chi(X)$ which was introduced in~\ref{s:line}. The most important case (in view of Theorem~\ref{thm:admissibility}) is when the twisting is a power of the determinant line bundle on $U$. 
	
	In this subsection, we construct a tame quotient of $\tilde{G}^s$ corresponding to $\chi$ as above, and we reformulate the criterion of Proposition~\ref{flat-tame-prop}(ii) more concretely. The main difference between this subsection and~\ref{s:line} is that here our character $\chi$ need only be defined on the identity component of $G^{s(1)}$. 
	
	The case when $\chi$ is trivial corresponds to untwisted $\D$-modules. Since this case has already been discussed in~\ref{s:locsys}, we assume that $\chi$ is nontrivial.  
	
	\begin{lem} \label{lem-tame-chi} 
		If $\Vect^G_\chi(X)$ is nonzero, then $\ker(\chi)$ is normal in $G^{s(1)}$, not just in $(G^{s(1)})_\circ$. Let $q_1 : G^{s(1)} \to Q_1$ be the quotient by the subgroup $\ker(\chi)$. Then there is a $\gm$-equivariant map $\tilde{G}^s \to Q_1 \times \BA^1$ whose fiber at 1 equals $q_1$. 
	\end{lem}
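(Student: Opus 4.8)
The plan is to first extract the normality of $\ker(\chi)$ from the hypothesis, then to reformulate the existence of the desired map as a factorization through the universal tame quotient of $G^{s(1)}$, and finally to establish that factorization component by component, the connected part being handled by a version of Proposition~\ref{prop:linebundles}(i).

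\textbf{Normality.} First I would show $\ker(\chi)$ is normal in $G^{s(1)}$. Take a nonzero $\mc{F}\in\Vect^G_\chi(X)$; being $G$-equivariant and nonzero it has positive rank along $U$, so the fiber $V:=\mc{F}|_{\ell(1)}$ is nonzero, and $G^{\ell(1)}=G^{s(1)}$ acts on it with $(G^{s(1)})_\circ$ acting by the scalar $\chi$. For $g\in G^{s(1)}$, $h\in(G^{s(1)})_\circ$ and $0\neq v\in V$ one has $h(gv)=g\big((g^{-1}hg)v\big)=\chi(g^{-1}hg)(gv)$ and simultaneously $h(gv)=\chi(h)(gv)$, so $\chi(g^{-1}hg)=\chi(h)$. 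Thus $\chi$ is $G^{s(1)}$-conjugation invariant, $\ker(\chi)$ is normal in $G^{s(1)}$, and $q_1:G^{s(1)}\sra Q_1:=G^{s(1)}/\ker(\chi)$ is a genuine quotient of algebraic groups of relative dimension $1$ (as $\chi$ is a nontrivial character of the connected group $(G^{s(1)})_\circ$).

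\textbf{Reduction to a factorization.} Next I would reduce the construction of $\tilde{G}^s\to Q_1\times\BA^1$ to showing that $q_1$ factors through the surjection $\rho:G^{s(1)}\sra G_1^{\tame}$ of Corollary~\ref{cor-tame-univ} (the fiber at $1$ of $\tilde{G}^s\to\tau^{\tame}(\tilde{G}^s)$; recall $G_1^{\tame}$ is the general fiber of $\tau^{\tame}(\tilde{G}^s)$, with coordinate ring $F^{\ge 0}A$). Granting $\ker(\rho)\subseteq\ker(\chi)$ we obtain $\bar q_1:G_1^{\tame}\to Q_1$ with $q_1=\bar q_1\circ\rho$. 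Now $Q_1\times\BA^1$ is the constant — hence tame — group scheme over $\BA^1/\gm$, corresponding under Remark~\ref{rmk-monoidal} to the filtered Hopf algebra $(\oh_{Q_1},F_0)$ where $F_0^{\ge m}\oh_{Q_1}$ is everything for $m\le 0$ and is $0$ for $m>0$; consequently a $\gm$-equivariant homomorphism of group schemes $\tau^{\tame}(\tilde{G}^s)\to Q_1\times\BA^1$ over $\BA^1$ is exactly a Hopf algebra map $\oh_{Q_1}\to F^{\ge 0}A$ — i.e.\ a group homomorphism $G_1^{\tame}\to Q_1$ — with no filtration constraint at all. Feeding in $\bar q_1$ and precomposing with $\tilde{G}^s\to\tau^{\tame}(\tilde{G}^s)$ yields the required map, whose fiber at $1$ is $\bar q_1\circ\rho=q_1$.

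\textbf{Proving $\ker(\rho)\subseteq\ker(\chi)$.} I would first check $\ker(\rho)\subseteq(G^{s(1)})_\circ$: for each connected component $C\subseteq G^{s(1)}$ the corresponding idempotent $e_C\in A$ lies in $F^{\ge 0}A$, since $\on{act}_{t^{-1}}^*e_C$ is the indicator of the corresponding component of $G^{s(t^{-n})}$, whose limit as $t\to 0$ (the indicator of a component of $G^{s(0)}$, or $0$, according as the ambient component of $\tilde{G}^s$ meets the zero fiber) exists; hence $F^{\ge 0}A$ has the same idempotents as $A$, so $\rho$ is a bijection on component groups and $\ker(\rho)=\ker\!\big((G^{s(1)})_\circ\to(G_1^{\tame})_\circ\big)$. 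It therefore suffices to prove that $\chi$ factors through $(G^{s(1)})_\circ\to(G_1^{\tame})_\circ$, equivalently — since, using the idempotent of the identity component, the coordinate ring $\oh_{(G_1^{\tame})_\circ}$ is exactly the degree-$0$ part of the Artin--Rees ring of the open (hence flat) $\gm$-equivariant subgroup scheme $(\tilde{G}^s)^\circ\subseteq\tilde{G}^s$ of fiberwise identity components — that $\chi$ lies in $F^{\ge 0}$ for the Artin--Rees filtration attached to $(\tilde{G}^s)^\circ$. This last statement is Proposition~\ref{prop:linebundles}(i) with $(G^{s(1)},\tilde{G}^s)$ replaced by $((G^{s(1)})_\circ,(\tilde{G}^s)^\circ)$, and its proof goes through verbatim: $\on{im}(\chi)=\gm$ is connected, so quotienting $(\tilde{G}^s)^\circ$ by the flat $\gm$-equivariant closure of $\ker(\chi)$ gives a flat $\gm$-equivariant group scheme over $\BA^1$ of relative dimension $1$ with general fiber $\gm$, and Lemma~\ref{lem-dim-1}(ii) provides the needed $\gm$-equivariant extension of $\chi$.

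The hypothesis that $\Vect^G_\chi(X)$ is nonzero is used only to produce the representation $V$ in the first step, hence the normality of $\ker(\chi)$; that step is routine. I expect the real work to be in the last step: because $\chi$ is a priori only a character of the identity component $(G^{s(1)})_\circ$, Proposition~\ref{prop:linebundles}(i) does not apply on the nose, and one must rerun its argument for the open subgroup scheme $(\tilde{G}^s)^\circ$ and transport the conclusion back to $A$ via the control on idempotents. Passing through the universal tame quotient is what makes this transport clean, and lets us sidestep analysing the (possibly disconnected) group scheme obtained by quotienting $\tilde{G}^s$ by the closure of $\ker(\chi)$ directly.
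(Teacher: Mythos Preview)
Your approach is correct and takes a genuinely different route from the paper's. The paper argues directly: it forms $\tilde Q := \tilde G^s / K$ where $K$ is the flat $\gm$-equivariant closure of $\ker(\chi)$ in $\tilde G^s$, observes that $(\tilde Q_0)_\circ$ is either $\gm$ or $\ga$, and in each case shows the $\gm$-action on $\tilde Q_0$ is expanding, so that every function on $\tilde Q_1 = Q_1$ extends $\gm$-invariantly to $\tilde Q$, producing the map. You instead reduce to factoring $q_1$ through the universal tame quotient $\rho: G^{s(1)} \to G_1^{\tame}$ of Corollary~\ref{cor-tame-univ}; this is more conceptual and sidesteps the case analysis. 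Your treatment of normality (the direct calculation $\chi(g^{-1}hg) = \chi(h)$) is equivalent to, though phrased differently from, the paper's (which writes $\ker(\chi)$ as the intersection of the kernel of the representation with $(G^{s(1)})_\circ$, both normal).

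There is, however, a small technical slip in your last step. You want a $\gm$-equivariant flat affine subgroup scheme of $\tilde G^s$ whose Artin--Rees degree-$0$ part realizes $\oh_{(G_1^{\tame})_\circ} = e_{C_0}\, F^{\ge 0}A$. The correct object is $\tilde C_0$, the connected component of $\tilde G^s$ containing the identity section (equivalently, the flat closure of $(G^{s(1)})_\circ \times (\BA^1 \setminus \{0\})$): it is open-and-closed in $\tilde G^s$, hence affine, with graded ring $e_{C_0}\cdot(\oplus_m F^{\ge m}A)$. You instead invoke $(\tilde G^s)^\circ$, the open subgroup scheme of \emph{fiberwise} identity components. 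Whenever more than one component of $G^{s(0)}$ lies in $\tilde C_0$ (i.e.\ $|\sigma^{-1}([\mathrm{id}])|>1$ in the notation of~\ref{s:locsys}), $(\tilde G^s)^\circ$ is strictly smaller than $\tilde C_0$, need not be affine, and its $\gm$-invariant global sections can strictly contain $e_{C_0}F^{\ge 0}A$. So your identification ``$\oh_{(G_1^{\tame})_\circ}$ is exactly the degree-$0$ part of the Artin--Rees ring of $(\tilde G^s)^\circ$'' is not justified as stated. The repair is immediate: run the argument with $\tilde C_0$ in place of $(\tilde G^s)^\circ$. Then $(\tilde C_0)_1 = (G^{s(1)})_\circ$, the quotient by the flat closure of $\ker(\chi)$ has fiber $\gm$ at $1$, Lemma~\ref{lem-dim-1} applies, and one obtains $\chi \in e_{C_0}F^{\ge 0}A$, which is exactly the inclusion $\ker(\rho) \subseteq \ker(\chi)$ you need.
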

	\begin{proof}
		The first sentence follows from the fact that, if $G^{s(1)}$ has a nonzero representation for which the action of $(G^{s(1)})_\circ$ factors through $\chi$, then the intersection of the kernel of this representation with $(G^{s(1)})_\circ$ must be $\ker(\chi)$. Thus, $\ker(\chi)$ is an intersection of two normal subgroups, so it is normal. 
		
		Now we prove the last statement. If $G^{s(1)}$ is connected, it follows from Proposition~\ref{prop:linebundles}(i). For the general case, we shall modify the proof, which uses Lemma~\ref{lem-dim-1}. Consider the codimension-one subgroup $\ker(\chi) \subset G^{s(1)}$, extend it to a $\gm$-equivariant sub group scheme $K \subset \tilde{G}^s$ which is flat over $\BA^1$, and let $\tilde{G}^s \sra \tilde{Q}$ be the quotient by $K$. Lemma~\ref{lem-dim-1}(i) applies to the identity component of $\tilde{Q}$, and it tells us that the action of $\gm$ on the identity component of $\tilde{Q}_0$ is expanding.
		
		If $(\tilde{Q}_0)_\circ \simeq \gm$, then the action $\gm \acts \tilde{Q}_0$ must be trivial (by rigidity of tori), so the proof of (i) implies (ii) in Lemma~\ref{lem-dim-1} gives a $\gm$-equivariant map $\tilde{Q} \to Q_1 \times \BA^1$, and we are done. 
		
		If $(\tilde{Q}_0)_\circ \simeq \ga$, then since every action $\gm \acts \BA^1$ is equal to a translate of a power of the standard action, and since the action $\gm \acts \tilde{Q}_0$ respects the group structure, it follows that the action $\gm \acts \tilde{Q}_0$ is expanding. Now we finish as in the previous paragraph. 
	\end{proof}
	
	\subsubsection{Definition of the tame quotient associated to $\chi$} \label{q-tame-chi} 
	
	In the notation of Lemma~\ref{lem-tame-chi}, let $Q$ be the group scheme obtained from $Q_1 \times \BA^1$ by deleting from $Q_1 \times \{0\}$ the components which are not in the image of $G^{s(0)}$. Then consider the map $q : \tilde{G}^s \to Q$ obtained by factoring the map $\tilde{G}^s \to Q_1 \times \BA^1$ through $Q$. This is a tame quotient in the sense of~\ref{sec:univtame}, and in view of Corollary~\ref{cor-tame-univ} it must factor through the universal tame quotient of $\tilde{G}^s$. 
	
	It is also related to the tame quotient of components considered in~\ref{s:locsys}. We have a map of exact sequences
	\begin{cd}
		\gm \ar[d, hookrightarrow] \ar[r, shift left = 0.5, dash] \ar[r, shift right = 0.5, dash] & \gm \ar[d, hookrightarrow] \\
		Q_1 \ar[d, twoheadrightarrow] \ar[r, hookleftarrow] & Q_0 \ar[d, twoheadrightarrow] \\
		\Com(G^{s(1)}) \ar[r, hookleftarrow] & \Im(\sigma)
	\end{cd}
	where $\sigma$ was defined in~\ref{s:locsys}. The bottom row is the `specialization map' for $H_{\locsys}$ as defined in~\ref{s:locsys}. 
	
	When the twisting is given by a power of the determinant line, there is a nice criterion for testing whether this tame quotient satisfies Proposition~\ref{flat-tame-prop}(ii). 
	
	\begin{lem} \label{lem-top}
		If the character $\chi$ is a (nonzero) rational power of the character $(G^{s(1)})_\circ \acts \wedge^{\on{top}} \mc{T}_{\ell(1)}U$, then the tame quotient $\tilde{G}^s \to Q$ constructed above is flat if and only if the action $(G^{s(0)})_{\circ} \acts \wedge^{\on{top}}\mc{T}_{\ell(0)}Z$ is nontrivial.\footnote{Since $G^{s(0)}$ acts trivially on the fibers of $\mc{N}_{Z/X}$, it does not matter whether $\mc{T}_{\ell(0)}Z$ is interpreted as the tangent space taken in $Z$ or in $X$.}  
	\end{lem}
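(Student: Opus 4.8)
The plan is to treat flatness of $q$ as a condition concentrated at the zero fibre, convert that condition into the non‑vanishing of a single character of $(G^{s(0)})_\circ$, and then identify that character — by applying the classification to the tangent bundle $\mc{T}_X$ — with the character through which $(G^{s(0)})_\circ$ acts on $\wedge^{\on{top}}\mc{T}_{\ell(0)}Z$.

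\emph{Step 1: reduce to the zero fibre.} Both $\tilde{G}^s\to\BA^1$ (smooth, by Lemma~\ref{lem-gs}(i)) and $Q\to\BA^1$ (an open subscheme of $Q_1\times\BA^1$, see~\ref{q-tame-chi}) are flat. Hence, for a point $x\in\tilde{G}^s$ over $c\in\BA^1$, the lemma of Bejleri used in the proof of Proposition~\ref{flat-tame-prop}(ii), applied with base $S=\BA^1$, shows that $q$ is flat at $x$ iff the fibre map $q_c\colon\tilde{G}^s|_c\to Q|_c$ is flat at $x$. For $c\neq0$, the $\gm$-action identifies $q_c$ with $q_1\colon G^{s(1)}\sra Q_1$ (the quotient by $\ker\chi$), which is a surjective homomorphism of algebraic groups and therefore flat. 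So $q$ is flat if and only if $q_0\colon G^{s(0)}\to Q_0$ is flat.

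\emph{Step 2: flatness of $q_0$ as a character condition.} Since $\chi$ is nontrivial, its image $(Q_1)_\circ$ is $\gm$, and by construction $Q_0\subseteq Q_1$ is the union of the connected components meeting the image of $G^{s(0)}$; as $1\in Q_1$ is such a point, $(Q_1)_\circ\subseteq Q_0$, so $Q_0$ is a disjoint union of $\gm$‑cosets, hence regular of pure dimension $1$. A finite type morphism from the reduced Noetherian scheme $G^{s(0)}$ to a regular $1$‑dimensional scheme is flat iff no irreducible component of the source is contracted to a point; since $q_0$ is a group homomorphism, translating a component by a representative reduces this to the identity component, so $q_0$ is flat iff the character $q_0|_{(G^{s(0)})_\circ}\colon(G^{s(0)})_\circ\to(Q_0)_\circ\simeq\gm$ is nontrivial. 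Restricting the $\gm$‑equivariant homomorphism of group schemes produced in Lemma~\ref{lem-tame-chi} to the fibrewise identity component of $\tilde{G}^s$ and comparing fibres over $1$ and $0$, this character is exactly the specialization $\on{sp}(\chi)$ of $\chi$ to the zero fibre, in the sense of Proposition~\ref{prop:linebundles}(i).

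\emph{Step 3: compute the specialization via $\mc{T}_X$.} Let $\mc{L}:=\wedge^{\on{top}}\mc{T}_X\in\Pic^G(X)$ and let $\chi_{\mc{L}}$ be the character of $G^{s(1)}$ acting on $\mc{L}|_{\ell(1)}$. Since $\ell(1)\in U$ we have $\mc{L}|_{\ell(1)}=\wedge^{\on{top}}\mc{T}_{\ell(1)}U$, so by hypothesis $\chi$ is a nonzero rational power of $\chi_{\mc{L}}|_{(G^{s(1)})_\circ}$; write $\chi^{q}=\chi_{\mc{L}}^{\,p}|_{(G^{s(1)})_\circ}$ with $q>0$, $p\neq0$. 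By Proposition~\ref{prop:linebundles}(i) and Remark~\ref{rmk-concrete}(iii), $\on{sp}(\chi_{\mc{L}})$ is the character of $G^{s(0)}$ acting on $\mc{L}|_{\ell(0)}=\wedge^{\on{top}}\mc{T}_{\ell(0)}X$. Now $G^{s(0)}$ fixes the nonzero vector $s(0)$ of the line $\mc{N}_{Z/X}|_{\ell(0)}$, hence acts trivially on that line, so the $G^{s(0)}$‑equivariant sequence $0\to\mc{T}_{\ell(0)}Z\to\mc{T}_{\ell(0)}X\to\mc{N}_{Z/X}|_{\ell(0)}\to0$ gives an isomorphism $\wedge^{\on{top}}\mc{T}_{\ell(0)}X\simeq\wedge^{\on{top}}\mc{T}_{\ell(0)}Z$ of $G^{s(0)}$‑modules. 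Because taking $t\to0$ limits of (conjugated) characters is multiplicative, $\on{sp}(\chi)^{q}=\on{sp}(\chi_{\mc{L}})^{p}$ on $(G^{s(0)})_\circ$; as $(G^{s(0)})_\circ$ is connected and $p\neq0$, $\on{sp}(\chi)$ is nontrivial iff $\on{sp}(\chi_{\mc{L}})|_{(G^{s(0)})_\circ}$ is, i.e.\ iff $(G^{s(0)})_\circ$ acts nontrivially on $\wedge^{\on{top}}\mc{T}_{\ell(0)}Z$. Combining Steps~1--3 proves the lemma.

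I expect Step~3 to be the main obstacle, specifically the bookkeeping forced by the fact that $\chi$ is defined only on $(G^{s(1)})_\circ$: one must pass to the fibrewise identity-component subgroup scheme of $\tilde{G}^s$ in order to speak of $\on{sp}(\chi)$ at all, verify there that $\on{sp}(\chi)=q_0|_{(G^{s(0)})_\circ}$, and check that specialization is compatible with restriction to identity components and with taking powers, so that the comparison with the genuine character $\chi_{\mc{L}}$ of $G^{s(1)}$ goes through.
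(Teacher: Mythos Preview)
Your proof is correct and follows essentially the same route as the paper. The only cosmetic differences are that the paper phrases the fibre-at-$0$ condition as \emph{surjectivity} of $q_0$ (equivalent to your flatness of $q_0$ once $Q_0$ is seen to be smooth one-dimensional and $q_0$ is surjective on components), and that the paper sidesteps the identity-component bookkeeping you flag in Step~3 by redefining $\chi$ at the outset to be the full character of $G^{s(1)}$ on $\wedge^{\on{top}}\mc{T}_{\ell(1)}U$ and only invoking the ``nonzero rational power'' at the end.
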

	\begin{proof}
	    Let $\chi : G^{s(1)} \to \gm$ be the character given by acting on the line $\wedge^{\on{top}} \mc{T}_{\ell(1)}U$. First, we claim the following: 
	    \begin{itemize}
	        \item The map $G^{s(0)} \to Q_0$ is surjective if and only if the character $\lim \chi$ of $G^{s(0)}$ is nontrivial on $(G^{s(0)})_\circ$. 
	    \end{itemize}
	    Since the map $G^{s(0)} \to Q_0$ is surjective on components (by definition of $Q$), it suffices to test whether the map $(G^{s(0)})_\circ \to (Q_0)_\circ \simeq \gm$ is a surjection. This occurs if and only if it is nontrivial. Furthermore, the construction of Lemma~\ref{lem-tame-chi} shows that the latter map is given by a nonzero rational multiple of $\lim \chi$. This character is surjective if and only if $\lim \chi$ is, so the bullet point statement holds. 
	    
	    To finish, we show that $\lim \chi$ equals the character with which $G^{s(0)}$ acts on the line $\wedge^{\on{top}}\mc{T}_{\ell(0)}Z$. The line bundle $\wedge^{\on{top}}\ell^*\mc{T}_X$ on $\BA^1$ is acted on by $G^s$ and $\gm$. By~\ref{prop:linebundles-1}, this defines a (tame) quotient $q : \tilde{G}^s \to \gm \times \BA^1$. By construction, $q_1 = \chi$, so $q_0 = \lim \chi$, and the claim follows. 
	\end{proof}
	
	\begin{rmk} \label{rmk-nonflat} 
	    \begin{enumerate}[label=(\roman*)]
	        \item[]
	        \item When the criterion of Lemma~\ref{lem-top} is satisfied, then the category $\Vect^G_\chi(X)$ can be described using Corollary~\ref{cor-tame-classify}. 
	        
	        Even when the criterion is not satisfied, it is always true (by continuity) that the action $\tilde{G}^s \acts \mc{F}|_{\ell}$ corresponding to $\mc{F} \in \Vect^G_\chi(X)$ factors through the (non-tame) quotient $\tilde{G}^s \to \tilde{Q}$ constructed in the proof of Lemma~\ref{lem-tame-chi}. Using the method of Lemma~\ref{lem-dim-1}, one can prove that $\tilde{Q}$ is obtained from $Q_1 \times \BA^1$ by iterating the following two operations: 
    		\begin{itemize}
    			\item One may delete some connected components from the special fiber. 
    			\item One may apply the blow-up construction at the end of Example~\ref{ex:tame} to a (zero-dimensional) subgroup of the special fiber. 
    		\end{itemize}
    		We do not need this result, so we did not include the proof. 
    		\item By the way, the criterion is not satisfied in Examples~\ref{ex-su} and \ref{ex-sp}. But the present subsection does not apply to those examples, because in those examples the character $\chi : (G^{s(1)})_\circ \to \gm$ is trivial. In those examples, we study line bundles so we are free to use the material of~\ref{s:line} instead. For vector bundles, we could use~\ref{s:locsys}. 
	    \end{enumerate}
	\end{rmk}

	\section{Equivariant sheaves on weakly normal chains}\label{sec:wn}
	
	In this section, we explain how to classify vector bundles on \emph{weakly normal} $G$-chains. These are the $G$-chains obtained by gluing smooth simple ones (as considered in Section~\ref{sec:main}) as transversely as possible along their codimension-one orbits. 
	
	In~\ref{weak} and \ref{weak2}, we describe how to construct vector bundles on weakly normal $G$-chains by patching together vector bundles on the normalizations of their irreducible components. This material does not use Theorem~\ref{thm-main}, but that theorem (or any other classification result) can be used to describe vector bundles on the normalizations of the irreducible components. In~\ref{ssec:case-line}, we specialize to the case of line bundles and apply Theorem~\ref{thm-main} to arrive at a description of $\Vect^G(X)$ in terms of a `nearby cycles' or `gluing' functor, see Lemma~\ref{lem-nearby}. 
	
	\subsection{Weak normality} \label{weak}
	
	The condition of \emph{weak normality} was originally defined (for complex analytic spaces) by Andreotti and Norguet in \cite{AndreottiNorguet} and (for schemes) by Andreotti and Bombieri in \cite{AndreottiBombieri}. A variety $X$ is weakly normal if every finite, birational, bijective map $Y \to X$ is an isomorphism (deleting the word 'bijective' from this definition recovers the usual notion of normality). All normal schemes are weakly normal, but many others are as well. For example, a nodal cubic is weakly normal but not normal. As proved in \cite{kollar}, every Noetherian scheme $X$ admits a \emph{weak normalization}, i.e. a finite, birational map $X^{\mathrm{wn}} \to X$ satisfying the obvious universal property. 
	
	Let $G \acts X$ be a chain, let $Z \hra X$ denote the closed embedding of the disjoint union of the reduced codimension one orbits, let $\nu : \wt{X} \to X$ be the normalization, and consider the following Cartesian diagram: 
	\begin{equation} \label{patch} 
	\begin{tikzcd}
	\wt{Z} \ar[r, hookrightarrow] \ar[d] & \wt{X} \ar[d, "\nu"] \\
	Z \ar[r, hookrightarrow, "\iota"] & X
	\end{tikzcd}
	\end{equation}
	
	Since $\tilde{X}$ is a normal $G$-chain, it is smooth (see the last paragraph of~\ref{s-def-chains}). 
	
	\begin{prop}\label{weak-prop}
		If $X$ is weakly normal, then the following are true: 
		\begin{enumerate}[label=(\roman*)]
			\item $\wt{Z}$ is reduced. 
			\item The diagram~(\ref{patch}) is a Milnor square, meaning that these two equivalent conditions are satisfied: 
			\begin{itemize}
				\item (\ref{patch}) is a pushout square in the category of schemes. 
				\item The map 
				\[
				\oh_X \to \oh_{\wt{X}} \underset{\oh_{\wt{Z}}}{\times} \oh_Z
				\]
				is an isomorphism, where the terms on the right hand side are interpreted as sheaves of algebras on $X$, and the fibered product is computed in the category of such. 
			\end{itemize}
			\item The following diagram of categories is Cartesian
			\begin{cd}
				\Vect^G(X) \ar[d, "\nu^*"] \ar[r, "\iota^*"] & \Vect^G(Z) \ar[d] \\
				\Vect^G(\wt{X}) \ar[r] & \Vect^G(\wt{Z})
			\end{cd}
		\end{enumerate}
	\end{prop}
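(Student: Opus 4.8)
The plan is to identify the square~(\ref{patch}) with the \emph{conductor square} of the normalization and then run Milnor patching in a $G$-equivariant fashion. Recall that every chain is reduced and that $\nu:\wt X\to X$ is finite, birational, and canonically (hence $G$-equivariantly) attached to $X$; write $\mathfrak{c}\subseteq\oh_X$ for the conductor ideal, $W:=V(\mathfrak{c})\hra X$ and $\wt W:=V(\mathfrak{c}\oh_{\wt X})\hra\wt X$ for the conductor subschemes. The conductor square $(\wt W\hra\wt X,\ W\hra X)$ is \emph{always} a Milnor square: from the definition of $\mathfrak{c}$ one has $\oh_X=\oh_{\wt X}\times_{\oh_{\wt W}}\oh_W$ as sheaves of algebras, and that this fiber product of schemes is a pushout is standard. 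So the essential point is that~(\ref{patch}) coincides with this conductor square, away from a part of $Z$ over which $X$ is already normal and which contributes nothing.

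First I would localize. Since $\nu$ is birational, $G$-equivariant, and an isomorphism over a dense open, it is an isomorphism over every open $G$-orbit; hence the non-normal locus of $X$ is a closed $G$-invariant union of codimension-one orbits, in particular contained in $|Z|$. Decompose $Z=Z_{\mathrm{sing}}\sqcup Z_{\mathrm{norm}}$, where $Z_{\mathrm{sing}}$ is the union of those codimension-one orbits lying in the non-normal locus and $Z_{\mathrm{norm}}$ the union of the rest; both are closed and $G$-invariant, and $X=(X\setminus Z_{\mathrm{sing}})\cup(X\setminus Z_{\mathrm{norm}})$ is a $G$-invariant open cover. Over $X\setminus Z_{\mathrm{sing}}$ the scheme $X$ is normal, so $\nu$ restricts to an isomorphism, $\wt Z$ restricts to $Z_{\mathrm{norm}}$ (reduced), and~(\ref{patch}) restricts to a square whose left vertical is an isomorphism --- tautologically a Milnor square. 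Since both reducedness of $\wt Z$ and the Milnor-square condition are local on $X$, it is enough to work over $X\setminus Z_{\mathrm{norm}}$, so I may assume $|Z|$ equals the non-normal locus of $X$.

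Now weak normality enters. In characteristic zero weak normality coincides with seminormality, and $X$ is seminormal precisely when both $\mathfrak{c}$ and $\mathfrak{c}\oh_{\wt X}$ are radical ideals --- equivalently, when $X\cong\wt X\sqcup_{\wt W_{\mathrm{red}}}W_{\mathrm{red}}$ (see~\cite{AndreottiBombieri} and~\cite{kollar}). Hence $W$ is reduced, with support equal to the non-normal locus $=|Z|$; as $Z$ is reduced with the same support, $W=Z$ and therefore $\mathfrak{c}=\I_{Z/X}$. Consequently $\wt Z=\wt X\times_X Z=V(\I_{Z/X}\oh_{\wt X})=V(\mathfrak{c}\oh_{\wt X})=\wt W$, which is reduced; combined with the previous paragraph this gives~(i). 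Moreover~(\ref{patch}) is now literally the conductor square, hence a Milnor square; combined with the previous paragraph this gives~(ii).

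For~(iii), the Milnor square~(\ref{patch}) has $\iota$ and $\wt Z\hra\wt X$ closed immersions and $\nu$, $\wt Z\to Z$ finite, so Milnor's patching theorem for finitely generated projective modules --- globalized to the finite type scheme $X$, which is an affine-local matter --- produces an equivalence $\Vect(X)\simeq\Vect(\wt X)\times_{\Vect(\wt Z)}\Vect(Z)$. To upgrade this equivariantly, note that for each $n$ the square obtained from~(\ref{patch}) by the flat base change $G^{\times n}\times(-)\to(-)$ is again a Milnor square --- tensoring a pullback square of rings by the flat $k$-algebra $\oh_{G^{\times n}}$ preserves the pullback, and surjectivity of $\oh_{\wt X}\to\oh_{\wt Z}$ is preserved --- and these assemble into a map of bar (cosimplicial) diagrams; since Milnor patching is an equivalence of categories, hence functorial, it induces an equivalence after passing to limits, i.e.\ to $G$-equivariant objects. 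Unwinding, a $G$-equivariant vector bundle on $X$ is exactly a pair $(\mc E,\mc F)\in\Vect^G(\wt X)\times\Vect^G(Z)$ together with a $G$-equivariant isomorphism $\mc E|_{\wt Z}\simeq(\wt Z\to Z)^*\mc F$, which is the assertion that the displayed square of categories is Cartesian. The main obstacle is the care needed in this last paragraph --- verifying that Milnor patching is sufficiently natural to be applied simultaneously on $X$, $G\times X$ and $G\times G\times X$ so that equivariance data glue --- together with pinning down the right form of the seminormality characterization so that~(i) and~(ii) fall out cleanly; neither is deep.
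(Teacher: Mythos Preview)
Your proposal is correct and follows essentially the same route as the paper: identify~(\ref{patch}) with the conductor square using the characterization of weak/seminormality from \cite{kollar} (the paper cites this as Proposition~53) to get that both conductor subschemes are reduced, handle separately the components of $Z$ over which $X$ is already normal, and then invoke Milnor patching for~(iii). Your treatment of the $G$-equivariance in~(iii) via the bar construction is more explicit than the paper's, which simply appeals to the locality of $\Vect(-)$ and leaves the equivariant upgrade implicit.
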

	\begin{proof}
		(i). Let $C_X \hra X$ and $C_{\wt{X}} \hra \wt{X}$ be the conductor subschemes associated to $\nu$. Proposition 53 from~\cite{kollar} implies that $C_{\wt{X}}$ and $C_X$ are reduced. Hence $C_X$ is a union of components of $Z$. The relation $C_{\wt{X}} = \nu^*C_X$ (as closed subschemes of $\wt{Z}$) implies that the components of $\wt{Z}$ which lie over $C_X$ are reduced. The map $\wt{Z} \to Z$ must be an isomorphism over the components of $Z$ which are not contained in the conductor $C_X$, so the components of $\wt{Z}$ which do not lie over $C_X$ are also reduced.  
		
		Point (ii) follows from the definition of the conductor ideal and the previous paragraph. 
		
		Point (iii) follows from Milnor's Patching Theorem, as stated in~\cite{k-book}. Although this theorem is usually stated for affine schemes (i.e.\ rings), it also applies to non-affine schemes because vector bundles are a sheaf of categories over a scheme. 
	\end{proof}

	\subsection{Patching vector bundles on chains} \label{weak2} 
	
	\subsubsection{The graph of a chain} 
	
	Preserve the notation of~\ref{weak}, so $G \acts X$ is a chain. Let $\{U_j\}_{j \in S_U}, \{Z_i\}_{i \in S_Z}, \{\tilde{Z}_k\}_{k \in S_{\tilde{Z}}}$ be the irreducible components\footnote{This term was defined in~\ref{s-def-chains}.} of $U$, $Z$, and $\tilde{Z}$, respectively. Here $S_U, S_Z, S_{\tilde{Z}}$ are sets that index the irreducible components. In this subsection we describe a diagram in the category of $G$-chains consisting of the objects $U_j, Z_i, \tilde{Z}_k$, and $U_j \cup \tilde{Z}_k$ whenever these two $G$-orbits are adjacent in $\tilde{X}$. The maps of this diagram will be given by the obvious ones shown below: 
	\begin{cd}
		& U_j \cup \tilde{Z}_k \\
		U_j \ar[ru, "\text{inclusion}"] & & \tilde{Z}_k \ar[lu, swap, "\text{inclusion}"] \ar[d, "\text{restriction of } \nu"] \\
		& & Z_i
	\end{cd}
	
	First, we define maps of sets
	\begin{cd}
	    S_U & S_{\tilde{Z}} \ar[l, swap, "a"] \ar[d, "\nu"] \\
	    & S_Z
	\end{cd}
	The map $a$ sends $k \in S_{\tilde{Z}}$ to the element $j \in S_U$ such that $\tilde{Z}_k$ is adjacent to $U_j$ in $\tilde{X}$. This $j$ is unique because $\tilde{X}$ is smooth (see~\ref{s-def-chains}). The map $\nu$ sends $k \in S_{\tilde{Z}}$ to the element $i \in S_Z$ such that the normalization map $\nu : \tilde{X} \to X$ sends $\tilde{Z}_k$ to $Z_i$. 
	
	Next, we define a directed graph $\Gamma$ as follows: 
	\begin{itemize}
	    \item The vertex set is $S_U \sqcup S_{\tilde{Z}}^{(1)} \sqcup S_{\tilde{Z}}^{(2)} \sqcup S_Z$. As indicated here, we will use superscripts $(-)^{(1)}$ and $(-)^{(2)}$ to differentiate between the two copies of $S_{\tilde{Z}}$ in the vertex set. 
	    \item For every $k \in S_{\tilde{Z}}$, there are arrows 
	    \begin{cd}
		& k^{(1)} \\
		a(k) \ar[ru] & & k^{(2)} \ar[lu] \ar[d] \\
		& & \nu(k)
	\end{cd}
	\end{itemize}
	
    Finally, we define a functor $F : \Gamma \to G\mathrm{\text{-}Chains}$ which sends $j \in S_U$ to $U_j$, sends $k^{(1)} \in S_{\tilde{Z}}^{(1)}$ to $U_{a(k)} \cup \tilde{Z}_k$, sends $k^{(2)} \in S_{\tilde{Z}}^{(2)}$ to $\tilde{Z}_k$, and sends $i \in S_Z$ to $Z_i$. The behavior on arrows is given by the first diagram in this subsubsection. 
	
	\begin{prop}\label{weak-prop-2} 
		If $X$ is weakly normal, then the limit of the diagram of categories given by $\Vect^G(-) \circ F^{\op} : \Gamma^{\op} \to \on{Cat}$ is equivalent to $\Vect^G(X)$. 
	\end{prop}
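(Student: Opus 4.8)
The plan is to obtain the result by combining Proposition~\ref{weak-prop}(iii) with the elementary structure of the chains involved. Proposition~\ref{weak-prop}(iii) already presents $\Vect^G(X)$ as the $2$-fibre product $\Vect^G(\wt X)\times_{\Vect^G(\wt Z)}\Vect^G(Z)$, where the left functor is restriction along $\wt Z\hra\wt X$ and the right one is pullback along $\wt Z\to Z$; so it suffices to identify this fibre product with $\lim_{\Gamma^{\op}}\bigl(\Vect^G(-)\circ F^{\op}\bigr)$. To this end I would first observe that $\Gamma$ is built from three pieces: the full subgraph $\Gamma_1$ on $S_U\sqcup S_{\wt Z}^{(1)}$, which carries exactly the edges $a(k)\to k^{(1)}$; the discrete subgraph $\Gamma_2$ on $S_{\wt Z}^{(2)}$; and the discrete subgraph $\Gamma_3$ on $S_Z$. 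The only remaining edges of $\Gamma$ are the edges $k^{(2)}\to k^{(1)}$ (from $\Gamma_2$ into $\Gamma_1$) and $k^{(2)}\to\nu(k)$ (from $\Gamma_2$ into $\Gamma_3$). Since $\Gamma^{\op}$ has no composable pair of non-identity arrows (the graph is bipartite, with every edge running from a source to a sink), an object of the limit is simply an object at every vertex together with an isomorphism for every edge, and this data organises tautologically into $\lim_{\Gamma_1^{\op}}\times_{\lim_{\Gamma_2^{\op}}}\lim_{\Gamma_3^{\op}}$, the fibre product being along the functors induced by the two families of extra edges. It therefore remains to match each of the three corners and check that the transition functors agree.

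The two discrete corners are immediate from the geometry. Since $\wt X$ is a normal --- hence smooth --- $G$-chain, it is the disjoint union of its irreducible components (see the end of~\ref{s-def-chains}); similarly $\wt Z=\bigsqcup_{k}\wt Z_k$ and $Z=\bigsqcup_i Z_i$ are disjoint unions of single closed orbits, distinct orbits being disjoint and $\wt Z$ being reduced by Proposition~\ref{weak-prop}(i). Hence $\Vect^G(\wt Z)\simeq\prod_k\Vect^G(\wt Z_k)=\lim_{\Gamma_2^{\op}}$ and $\Vect^G(Z)\simeq\prod_i\Vect^G(Z_i)=\lim_{\Gamma_3^{\op}}$, and under these identifications the pullback functor $\Vect^G(Z)\to\Vect^G(\wt Z)$ sends $(\mc E_i)_i$ to $(\nu^*\mc E_{\nu(k)})_k$, which is precisely the functor induced by the edges $k^{(2)}\to\nu(k)$.

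For the remaining corner I would compute, for each $j$, the category $\Vect^G(\wt X_j)$ of the irreducible component $\wt X_j\subseteq\wt X$ containing $U_j$, by $G$-equivariant Zariski descent. Here $\wt X_j=U_j\cup\bigcup_{k\in a^{-1}(j)}\wt Z_k$, and the simple locally-closed sub-chains $V_k:=U_j\cup\wt Z_k$ (for $k$ in the finite set $a^{-1}(j)$) form a $G$-invariant open cover of $\wt X_j$. Because distinct closed orbits are disjoint, every multiple overlap $V_{k_1}\cap\cdots\cap V_{k_m}$ equals $U_j$ unless all the $k_i$ coincide, so iterating the two-chart gluing $\Vect^G(A\cup B)\simeq\Vect^G(A)\times_{\Vect^G(A\cap B)}\Vect^G(B)$ over this finite cover yields
\[
\Vect^G(\wt X_j)\ \simeq\ \Bigl(\,\textstyle\prod_{k\in a^{-1}(j)}\Vect^G(V_k)\Bigr)\times_{\prod_{k}\Vect^G(U_j)}\Vect^G(U_j),
\]
that is, the category of tuples $\bigl(\{\mc G_k\}_k,\ \mc F_j,\ \{\mc G_k|_{U_j}\xrightarrow{\sim}\mc F_j\}_k\bigr)$; this is exactly the $2$-limit of $\Vect^G(-)\circ F^{\op}$ over the star-shaped subgraph of $\Gamma_1$ around $j$. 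Taking the product over $j\in S_U$ gives $\Vect^G(\wt X)\simeq\lim_{\Gamma_1^{\op}}$, and tracing through the identifications shows that the restriction functor $\Vect^G(\wt X)\to\Vect^G(\wt Z)$, $\mc B\mapsto(\mc B|_{\wt Z_k})_k$, becomes the functor induced by the edges $k^{(2)}\to k^{(1)}$. Combining the three corners with Proposition~\ref{weak-prop}(iii) then yields $\lim_{\Gamma^{\op}}\bigl(\Vect^G(-)\circ F^{\op}\bigr)\simeq\Vect^G(\wt X)\times_{\Vect^G(\wt Z)}\Vect^G(Z)\simeq\Vect^G(X)$. The main obstacle is this last paragraph's descent step: one must legitimately invoke $G$-equivariant Zariski descent for these possibly non-affine chains --- cleanest if one works with $\Vect$ on the quotient stacks $[\,\cdot\,/G]$, so that $\{[V_k/G]\}$ is an honest open cover and $\Vect$ is a stack for it --- and keep careful track of the coherence isomorphisms; it is the collapse of all higher overlaps of the $V_k$ onto the single chart $U_j$ that makes the resulting $2$-limit the star-shaped one rather than something larger.
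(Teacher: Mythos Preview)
Your proof is correct and follows essentially the same route as the paper: both arguments reduce to Proposition~\ref{weak-prop}(iii) by identifying the limit over the star-shaped subgraph around each $j\in S_U$ with $\Vect^G(\wt X_j)$ via Zariski descent for the open cover $\{U_j\cup\wt Z_k\}_{k\in a^{-1}(j)}$, and then taking products over the remaining discrete pieces. The paper phrases this as ``contracting'' each star to a single vertex $v_j$ carrying $\Vect^G(\ol U_j)$, whereas you organise the limit as a $2$-fibre product $\lim_{\Gamma_1^{\op}}\times_{\lim_{\Gamma_2^{\op}}}\lim_{\Gamma_3^{\op}}$ from the outset; these are the same argument run in opposite directions.
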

	\begin{proof}
	    For the duration of this paragraph, fix $j \in S_U$ and let $k$ range over the subset $a^{-1}(j) \subset S_{\tilde{Z}}$. The subgraph of $\Gamma$ induced by the vertices $j$ and $k^{(1)}$ is connected to the rest of $\Gamma$ only via the edges $k^{(1)} \leftarrow k^{(2)}$ for each $k$. The limit of categories associated to this induced subgraph is $\Vect^G(\ol{U}_j)$ because the $U_j \cup \tilde{Z}_k$ constitute an open cover of $\ol{U}_j$. Thus, we may modify $F$ and $\Gamma$ (without changing the limit) by contracting this subgraph to a single vertex $v_{j}$ which maps to the category $\Vect^G(\ol{U}_j)$.
		
		Now we vary $j \in S_U$ and perform this replacement for all $j \in S_U$. We obtain a new directed graph $\Gamma'$ as follows: 
		\begin{itemize}
			\item The vertex set is $\{v_j \, | \, j \in S_U\} \sqcup S_{\tilde{Z}}^{(2)} \sqcup S_{Z}$.  
			\item For each $k \in S_{\tilde{Z}}$, there are edges $v_{a(k)} \leftarrow k^{(2)} \to \nu(k)$. 
		\end{itemize} 
		We also obtain a new diagram $(\Gamma')^{\op} \to \on{Cat}$ which sends $v_j \mapsto \Vect^G(\ol{U}_j)$ for each $j \in S_U$. By construction, this diagram has the same limit as the old one. 
		
		The product of the categories associated to the $v_j$ is equivalent to $\Vect^G(\tilde{X})$. Similarly, the product of the categories associated to the $k \in S_{\tilde{Z}}$ is equivalent to $\Vect^G(\tilde{Z})$, and the product of the categories associated to the $i \in S_Z$ is equivalent to $\Vect^G(Z)$. Now it is easy to see (using the definition of $\Gamma'$) that the desired limit of categories coincides with that of Proposition~\ref{weak-prop}(iii), as desired. 
	\end{proof}
	
	\begin{rmk}
		Concretely, this means that a $G$-equivariant vector bundle on $X$ is equivalent to the following data: 
		\begin{itemize}
			\item We choose $\mc{V}_i \in \Vect^G(Z_i)$ for each $i \in S_Z$. 
			\item We choose $\mc{W}_j \in \Vect^G(U_j)$ for each $j \in S_U$. 
			\item For each $k \in S_{\tilde{Z}}$, we choose $\mc{E} \in \Vect^G(U_{a(k)} \cup \tilde{Z}_k)$ along with isomorphisms $\mc{E}|_{U_{a(k)}} \simeq \mc{W}$ and $\mc{E}|_{\tilde{Z}_k} \simeq \nu^*(\mc{V}_{\nu(k)})$, where $\nu : \tilde{Z}_k \to Z_{\nu(k)}$ is the restriction of the normalization map of $X$. 
		\end{itemize}
		Since there are no two-dimensional cells in $\Gamma'$, cocycle conditions do not appear in the above list. This is a convenient consequence of the `codimension zero or one' requirement in the definition of a chain. 
		
		This gives a purely algebraic description of $\Vect^G(X)$. Indeed, if we choose basepoints $z_i \in Z_i$ and $u_j \in U_j$, then we have equivalences $\Vect^G(Z_i) \simeq \on{Rep}(\stab_G(z_i))$ and $\Vect^G(U_j) \simeq \on{Rep}(\stab_G(u_j))$. Theorem~\ref{thm-main} can be used to describe the category $\Vect^G(U_j \cup \tilde{Z}_i)$. 
	\end{rmk}
	
	\begin{cor}
		If $X$ is weakly normal and $\Gamma$ is contractible,\footnote{This means that the topological space associated to $\Gamma$ is contractible, i.e.\ the directed graph $\Gamma$ does not contain a `zig-zag cycle.'} then the assertion of Proposition~\ref{weak-prop-2} also holds at the level of isomorphism classes. 
	\end{cor}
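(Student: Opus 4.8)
The plan is to upgrade the category-level equivalence of Proposition~\ref{weak-prop-2} (equivalently, the Cartesian square of categories in Proposition~\ref{weak-prop}(iii)) to a statement about isomorphism classes, using the contractibility hypothesis on $\Gamma$ to kill the obstruction that would otherwise arise from automorphisms. The starting point is the concrete description of $\Vect^G(X)$ in the Remark following Proposition~\ref{weak-prop-2}: an object is a tuple $(\mc{V}_i, \mc{W}_j, \mc{E}_k, \text{isos})$, and a morphism is a compatible tuple of morphisms. Since $\Gamma'$ has no $2$-cells, there are no cocycle conditions, so the set of isomorphism classes of $\Vect^G(X)$ is the set of connected components of the groupoid obtained as the (strict) limit of the diagram $\Vect^G(-)\circ F^{\op}$ restricted to isomorphisms.

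The key step is the following general fact: if a finite diagram $\mc{D} : \Gamma^{\op} \to \on{Grpd}$ indexed by a graph $\Gamma$ (viewed as a $1$-category generated by its edges) has $\Gamma$ contractible as a topological space, then $\pi_0(\lim \mc{D}) \to \lim (\pi_0 \circ \mc{D})$ is a bijection. I would prove this by the usual descent/Mayer--Vietoris bookkeeping: an object of $\lim\mc{D}$ up to isomorphism is a choice of object $x_v$ at each vertex together with, for each edge $e : v \to w$, an isomorphism $\mc{D}(e)(x_w) \simeq x_v$, modulo the action of $\prod_v \curAut(x_v)$; surjectivity onto $\lim(\pi_0\circ\mc{D})$ is the statement that any family of component classes that is compatible along edges can be rigidified to an actual family of gluing isomorphisms (which is automatic here since each edge map is just a morphism of groupoids, no higher coherence), and injectivity uses that two such rigidifications differ by a cocycle in the edge-groupoids which, because $\Gamma$ is a tree up to homotopy (contractible graph = no zig-zag cycle), can be trivialized by adjusting the automorphisms $\curAut(x_v)$ vertex by vertex, working outward from a chosen spanning tree. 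Concretely: choose a maximal subtree $T \subseteq \Gamma$; along $T$ one can always normalize all gluing data simultaneously, and contractibility means $T = \Gamma$ up to homotopy so there are no leftover edges to obstruct.

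Then I would simply apply this to $\Gamma$ (noting that $\Gamma$ and $\Gamma'$ are homotopy equivalent — the contraction in the proof of Proposition~\ref{weak-prop-2} collapses subtrees — so $\Gamma$ contractible implies $\Gamma'$ contractible), combined with Proposition~\ref{weak-prop-2}. The main obstacle I anticipate is purely one of bookkeeping: making precise that the limit of groupoids in Proposition~\ref{weak-prop-2} is computed as a strict (not homotopy) limit and that, in the absence of $2$-cells in $\Gamma'$, this strict limit already has the correct $\pi_0$; and then carefully running the spanning-tree argument so that the automorphism groups $\curAut(\mc{V}_i), \curAut(\mc{W}_j), \curAut(\mc{E}_k)$ are used to absorb exactly the edge-discrepancies and no more. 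None of this requires any input beyond Proposition~\ref{weak-prop-2} and elementary groupoid combinatorics, so the proof should be short.
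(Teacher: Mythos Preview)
Your proposal is correct and is precisely the argument the paper has in mind; the corollary is stated without proof because the authors regard it as an immediate consequence of the general principle you articulate, namely that for a diagram of groupoids indexed by a graph whose underlying space is contractible (equivalently, a tree), $\pi_0$ commutes with the (pseudo-)limit. Your observation that $\Gamma$ and $\Gamma'$ are homotopy equivalent (each $k^{(1)}$ is a valence-two vertex, so suppressing it does not change the homotopy type) is the only point that requires checking, and you have it right.
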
 
	
	\subsection{The case of line bundles} \label{ssec:case-line} 
	
	\subsubsection{An analogue of the nearby cycles functor} \label{def-nearby}
	
	For this subsubsection only, we return to the setting of Section~\ref{sec:main}, so that $X = U \cup Z$ is a smooth simple fastened $G$-chain with one closed orbit. We shall describe the category $\Pic^G(X)$ in a way which more closely resembles the `gluing data' approach mentioned in~\ref{intro-gluing}. 
	
	Choose a fastening datum $(\gamma, \ell)$ for $X$. This allows one to define a `nearby cycles' functor 
	\[
	    \Psi : \Pic^G(U) \to \Pic^G(\mc{N}^{\circ}_{Z/X}), 
	\]
	given by the composition 
	\begin{cd}
			\Pic^G(U) \ar[r] & \Pic(\pt / G^{s(0)}) \ar[leftarrow]{r}{\text{restrict}}[swap]{\sim} & \Pic^G(\mc{N}^{\circ}_{Z/X})
	\end{cd}
	To define the unlabeled arrow, interpret an object of $\Pic^G(U)$ as a pair $(\chi, V)$ where $\chi : G^{s(1)} \to \gm$ is a character and $V$ is a 1-dimensional vector space. This maps to $(\lim \chi, V) \in \Pic(\pt / G^{s(0)})$ where $\lim \chi$ is the character of $G^{s(0)}$ obtained by taking a limit of $\chi$ under $\gamma$-conjugation. (Note that $\lim \chi$ exists by Proposition~\ref{prop:linebundles}(i).) 
	
	\begin{lem} \label{lem-nearby}
		We have a Cartesian diagram 
		\begin{cd}
			\Pic^G(X) \ar[r] \ar[d] & \Pic^G(Z) \ar[d, "\mathrm{pullback}"] \\
			\Pic^G(U) \ar[r, "\Psi"] & \Pic^G(\mc{N}^{\circ}_{Z/X})
		\end{cd}
	\end{lem}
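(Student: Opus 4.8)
The plan is to compute all four corners of the square by choosing compatible basepoints, to recognize the resulting description of $\Pic^G(X)$ coming from Theorem~\ref{thm-main} as the fiber product of the other three, and then to check that this identification is compatible with the projection functors.

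First I would fix the basepoints $\ell(1) \in U$ (which equals $s(1)$ under the dictionary of~\ref{slice-line}), $\ell(0) \in Z$, and $s(0) \in \mc{N}^{\circ}_{Z/X}$; recall from the proof of Corollary~\ref{cor-n}(ii) that $p(s(0)) = \ell(0)$, where $p : \mc{N}^{\circ}_{Z/X} \to Z$ is the projection. Since $G$ acts transitively on $U$, on $Z$, and --because $X$ is fastened-- on $\mc{N}^{\circ}_{Z/X}$, these basepoints identify $\Pic^G(U)$, $\Pic^G(Z)$, and $\Pic^G(\mc{N}^{\circ}_{Z/X})$ with the groupoids of pairs (character, line) for the stabilizers $G^{s(1)}$, $G^{\ell(0)}$, and $G^{s(0)}$ respectively. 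Under these identifications the functor $\mathrm{pullback}$ becomes restriction of characters along $G^{s(0)} \hra G^{\ell(0)}$, and by construction the functor $\Psi$ becomes $(\chi, V) \mapsto (\lim\chi, V)$, where $\lim\chi$ is the character of $G^{s(0)}$ produced by Proposition~\ref{prop:linebundles}(i). Hence the fiber product $P$ of the bottom-left and top-right corners over the bottom-right corner is equivalent to the groupoid of triples $(\chi, \psi, V)$ with $\chi \in \Char(G^{s(1)})$, $\psi \in \Char(G^{\ell(0)})$, $V$ a one-dimensional vector space, and $\psi|_{G^{s(0)}} = \lim\chi$.

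Next I would unwind $\Pic^G(X)$ using Theorem~\ref{thm-main}. As in~\ref{prop:linebundles-1}, a $G$-equivariant line bundle on $X$ is the datum of a $\gm$-equivariant homomorphism $\tilde{G}^s \to \gm \times \BA^1$ --equivalently, by Proposition~\ref{prop:linebundles}(i), a character $\chi = q_1$ of $G^{s(1)}$ with $q_0 = \lim\chi$-- together with a $\gm$-equivariant line bundle on $\BA^1$, i.e.\ a line $V$ and an integer $d$, subject to the $(-)_{(n)}$ condition of Corollary~\ref{cor-n}(ii). That condition says precisely that the character $(d, \lim\chi)$ of $\gm \ltimes G^{s(0)}$ descends along the surjection $\gm \ltimes G^{s(0)} \sra G^{\ell(0)}$ of Corollary~\ref{cor-n}(i); a descent $\psi \in \Char(G^{\ell(0)})$ is unique when it exists, and by diagram~(\ref{groups}) its restriction along $G^{s(0)} \hra G^{\ell(0)}$ equals $\lim\chi$ while its restriction along $\gamma$ recovers $d$. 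Conversely, starting from $(\chi, \psi, V)$ with $\psi|_{G^{s(0)}} = \lim\chi$, the integer $d$ determined by $\psi \circ \gamma$ satisfies the congruence $d \equiv m \pmod n$ needed for the $(-)_{(n)}$ condition, by Proposition~\ref{prop:linebundles}(ii). This yields a natural equivalence $\Pic^G(X) \simeq P$. Finally, reading off Remark~\ref{rmk-concrete}(ii),(iii), the restriction functors $\Pic^G(X) \to \Pic^G(U)$ and $\Pic^G(X) \to \Pic^G(Z)$ become $(\chi, \psi, V) \mapsto (\chi, V)$ and $(\chi, \psi, V) \mapsto (\psi, V)$; since both ways around the square send $(\chi,\psi,V)$ to $(\lim\chi, V) = (\psi|_{G^{s(0)}}, V)$, the square $2$-commutes and the equivalence $\Pic^G(X) \simeq P$ respects the projections. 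Therefore the square is Cartesian.

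The step I expect to be the main obstacle is matching the $(-)_{(n)}$ condition of Corollary~\ref{cor-n} with the transparent requirement that $\lim\chi$ extend to a character of $G^{\ell(0)}$, while keeping honest track of the filtration integer $d$: this requires using the exact-sequence diagram~(\ref{groups}) relating $G^{s(0)}$, $\gm \ltimes G^{s(0)}$, and $G^{\ell(0)}$ with kernel $\mu_n$, together with the $\bmod\, n$ computation of Proposition~\ref{prop:linebundles}(ii), to verify that the integer datum is neither over- nor under-determined. The remaining steps --identifying the three homogeneous spaces, describing the two maps into $\Pic^G(\mc{N}^{\circ}_{Z/X})$, and checking $2$-commutativity-- are routine bookkeeping with basepoints and stabilizers.
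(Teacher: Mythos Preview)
Your proposal is correct and follows essentially the same approach as the paper's own proof: both identify the fiber product as triples $(\chi,\psi,V)$ subject to $\psi|_{G^{s(0)}}=\lim\chi$, describe $\Pic^G(X)$ via~\ref{prop:linebundles-1} as triples $(V,d,\chi)$ subject to the $(-)_{(n)}$ condition, and then use the surjection $\gm\ltimes G^{s(0)}\sra G^{\ell(0)}$ from Corollary~\ref{cor-n} to pass between the integer $d$ and the extended character $\psi$. You are in fact slightly more explicit than the paper about the 2-commutativity of the square and about why the integer $d$ is exactly recovered from $\psi\circ\gamma$; the paper handles this with ``it is straightforward to check,'' noting only that $V$ and $\gr^F V$ are canonically isomorphic since $V$ is one-dimensional.
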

	\begin{proof}
		An object in the Cartesian product of the labeled maps is given by a triple $(\chi, V, \chi')$ as follows: 
		\begin{itemize}
			\item $\chi : G^{s(1)} \to \gm$ is a character, $V$ is a 1-dimensional vector space, and $\chi' : G^{\ell(0)} \to \gm$ is a character. 
			\item These data must satisfy the requirement that the restriction of $\chi'$ along $G^{s(0)} \xrightarrow{\iota} G^{\ell(0)}$ equals $\lim \chi$. 
		\end{itemize} 
		On the other hand, an object in $\Pic^G(X)$ corresponds (via~\ref{prop:linebundles-1}) to a triple $(V, F, \chi)$ as follows: 
		\begin{itemize}
			\item $V$ is a 1-dimensional vector space, $F$ is an exhausting filtration on $V$ (i.e.\ an integer $d = \deg V$), and $\chi : G^{s(1)} \to \gm$ is a character. 
			\item These data must satisfy the requirement that the character of $\gm \ltimes G^{s(0)}$ defined by $(d, \lim \chi)$ descends along the map $\gm \ltimes G^{s(0)} \sra G^{\ell(0)}$. 
		\end{itemize}
		We define an equivalence between these two categories by sending 
		\[
			(V, F, \chi) \mapsto (\chi, V, \chi')
		\]
		where $\chi'$ is the character of $G^{\ell(0)}$ given by the last bullet point above, which in particular depends on $F$. The inverse equivalence is defined by sending $(\chi, V, \chi')$ to the triple $(V, F, \chi)$ where $F$ places $V$ in degree $d$, for the integer $d$ defined by the weight of the action $\gm \xrightarrow{\gamma} G^{\ell(0)} \acts V$. It is straightforward to check that these are mutually inverse equivalences, and that there are natural isomorphisms which make this equivalence compatible with the projections to $\Pic^G(U)$ and $\Pic^G(Z)$. For the latter, it is important to note that the vector spaces $V$ and $\gr^F V$ are \emph{canonically} isomorphic, because $V$ is 1-dimensional. 
	\end{proof}
	
	\subsubsection{The category of line bundles} 
	
	By combining Proposition~\ref{weak-prop-2} and Lemma~\ref{lem-nearby}, we obtain a characterization of $\Pic^G(X)$ as the limit of a diagram of categories with maps as follows, where $\Psi_{j, k}$ is the nearby cycles functor $\Psi$ for the smooth simple chain $U_j \cup \tilde{Z}_k$: 
	\begin{cd}
		\Pic^G(U_j) \ar[rd, "\Psi_{j, k}"] & & \Pic^G(Z_{\nu(k)}) \ar{ld}[swap]{\text{pullback}}\\
		& \Pic^G(\mc{N}^\circ_{\tilde{Z}_k / (U_j \cup \tilde{Z}_k)})
	\end{cd}
	In what follows, we explain this observation in more detail. 
	
	Let $\Gamma_{\on{pic}}$ be the directed graph defined as follows: 
	\begin{itemize}
		\item The vertex set is $S_U \sqcup S_{\tilde{Z}} \sqcup S_Z$. 
		\item For every $k \in S_{\tilde{Z}}$, there are arrows $a(k) \to k \leftarrow \nu(k)$. 
	\end{itemize} 
	Let $F_{\on{pic}} : \Gamma_{\on{pic}} \to \on{Cat}$ be the diagram defined as follows: 
	\begin{itemize}
		\item It sends $j \in S_U$ to $\Pic^G(U_j)$. 
		\item It sends $k \in S_{\tilde{Z}}$ to $\Pic^G(\mc{N}^{\circ}_{\tilde{Z}_k / (U_{a(k)} \cup \tilde{Z}_k)})$. 
		\item It sends $i \in S_Z$ to $\Pic^G(Z_i)$. 
		\item Its behavior on arrows is given by the above diagram. 
	\end{itemize} 
	
	\begin{cor}\label{cor:linebundles}
		If $X$ is weakly normal, then we have $\lim F_{\on{pic}} \simeq \Pic^G(X)$. 
	\end{cor}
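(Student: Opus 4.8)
The plan is to combine Proposition~\ref{weak-prop-2}, specialized to line bundles, with Lemma~\ref{lem-nearby}, which rewrites the contribution of each smooth simple piece $U_{a(k)}\cup\tilde{Z}_k$ of the patching diagram in terms of the nearby cycles functor $\Psi$.

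First I would pass from $\Vect^G$ to $\Pic^G$ in Proposition~\ref{weak-prop-2}. All transition functors of the diagram $\Vect^G(-)\circ F^{\op}$ are restriction and pullback functors, which preserve the full subcategory of rank-one objects; hence $\lim(\Pic^G(-)\circ F^{\op})$ is the full subcategory of $\Vect^G(X)\simeq\lim(\Vect^G(-)\circ F^{\op})$ on those sheaves which are rank one at every vertex of $\Gamma$. Because $\nu:\tilde{X}\to X$ is surjective and $\{U_j\}$ covers $\tilde{X}$, a sheaf on $X$ is everywhere of rank one as soon as its pullbacks to the chains $U_{a(k)}\cup\tilde{Z}_k$ are; so this full subcategory is exactly $\Pic^G(X)$, giving $\Pic^G(X)\simeq\lim(\Pic^G(-)\circ F^{\op})$.

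Next I would unwind the right-hand side using the explicit description recorded after Proposition~\ref{weak-prop-2}: an object is a family $\mc{W}_j\in\Pic^G(U_j)$, $\mc{V}_i\in\Pic^G(Z_i)$, $\mc{E}_k\in\Pic^G(U_{a(k)}\cup\tilde{Z}_k)$ together with isomorphisms $\mc{E}_k|_{U_{a(k)}}\simeq\mc{W}_{a(k)}$ and $\mc{E}_k|_{\tilde{Z}_k}\simeq\nu^*\mc{V}_{\nu(k)}$. Each $U_{a(k)}\cup\tilde{Z}_k$ is a smooth simple fastened $G$-chain (its closed orbit is fastened to its open orbit because $X$ is fastened), so fixing the fastening datum used to define $\Psi_{a(k),k}$ — one exists by Proposition~\ref{prop:fasteningdata} — Lemma~\ref{lem-nearby} identifies $\Pic^G(U_{a(k)}\cup\tilde{Z}_k)$ with the fiber product of $\Pic^G(U_{a(k)})\xrightarrow{\Psi_{a(k),k}}\Pic^G(\mc{N}^\circ_{\tilde{Z}_k/(U_{a(k)}\cup\tilde{Z}_k)})$ and the pullback functor $\Pic^G(\tilde{Z}_k)\to\Pic^G(\mc{N}^\circ_{\tilde{Z}_k/(U_{a(k)}\cup\tilde{Z}_k)})$. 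Substituting this for $\mc{E}_k$ and transporting along the two displayed isomorphisms, the datum of $\mc{E}_k$ becomes precisely an isomorphism $\Psi_{a(k),k}(\mc{W}_{a(k)})\simeq(\text{pullback})(\mc{V}_{\nu(k)})$ in $\Pic^G(\mc{N}^\circ_{\tilde{Z}_k/(U_{a(k)}\cup\tilde{Z}_k)})$, where the pullback is taken along $\mc{N}^\circ_{\tilde{Z}_k/(U_{a(k)}\cup\tilde{Z}_k)}\to\tilde{Z}_k\xrightarrow{\nu}Z_{\nu(k)}$. But this is exactly the datum attached by $F_{\on{pic}}$ to the arrows $a(k)\to k\leftarrow\nu(k)$ of $\Gamma_{\on{pic}}$. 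Doing this for every $k$, and making the analogous check for morphisms, yields an equivalence $\lim(\Pic^G(-)\circ F^{\op})\simeq\lim F_{\on{pic}}$, hence $\Pic^G(X)\simeq\lim F_{\on{pic}}$.

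I expect the main obstacle to be purely organizational: one must verify that the local equivalences of Lemma~\ref{lem-nearby} are compatible with the remaining transition functors of the patching diagram, so that collapsing each length-three zig-zag $a(k)\to k^{(1)}\leftarrow k^{(2)}\to\nu(k)$ of $\Gamma$ to the length-two cospan $a(k)\to k\leftarrow\nu(k)$ of $\Gamma_{\on{pic}}$ does not change the limit. The one point that genuinely needs checking is that the restriction functor $\Pic^G(\tilde{Z}_k)\to\Pic^G(\mc{N}^\circ_{\tilde{Z}_k/(U_{a(k)}\cup\tilde{Z}_k)})$ of Lemma~\ref{lem-nearby} agrees with the composite of $\nu^*$ and the pullback used to define $F_{\on{pic}}$, and this is immediate from functoriality of pullback; everything else is bookkeeping with the explicit descriptions already available in the excerpt.
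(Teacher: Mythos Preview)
Your proposal is correct and follows essentially the same approach as the paper's proof: start from Proposition~\ref{weak-prop-2} specialized to line bundles, use Lemma~\ref{lem-nearby} to replace each span through $\Pic^G(U_{a(k)}\cup\tilde{Z}_k)$ by the cospan through $\Pic^G(\mc{N}^\circ_{\tilde{Z}_k/(U_{a(k)}\cup\tilde{Z}_k)})$, and then absorb the $\tilde{Z}_k$ vertices into the $Z_{\nu(k)}$ vertices. The paper phrases these two replacement steps abstractly as ``diagram modifications'' that preserve the limit (the second being passage to an initial subdiagram), whereas you work more concretely with the explicit description of objects in the limit, but the content is the same.
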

	\begin{proof}
		This follows from similar `diagram modifications' as were used in the proof of Proposition~\ref{weak-prop-2}. Starting with the diagram of Proposition~\ref{weak-prop-2} but with $\Pic^G(-)$ in place of $\Vect^G(-)$, one first replaces subdiagrams of the form 
		\begin{cd}
			& \Pic^G(U_{a(k)} \cup \tilde{Z}_k) \ar[ld] \ar[rd] \\
			\Pic^G(U_{a(k)}) & & \Pic^G(\tilde{Z}_k)
		\end{cd}
		with diagrams of the form
		\begin{cd}
			\Pic^G(U_{a(k)}) \ar[rd, "\Psi_{j, k}"] & & \Pic^G(\tilde{Z}_k) \ar[ld] \\
			& \Pic^G(\mc{N}^\circ_{\tilde{Z}_k / (U_{a(k)} \cup \tilde{Z}_k)})
		\end{cd}
		Lemma~\ref{lem-nearby} tells us that this does not change the resulting limit of categories. Lastly, for each $i \in S_Z$, one may replace the subdiagram induced by $\Pic^G(Z_i)$ and $\Pic^G(\tilde{Z}_k)$ for all $k \in \nu^{-1}(i)$ by the single object $\Pic^G(Z_i)$. This is because the limit does not change if we pass to an initial subdiagram. 
	\end{proof}
	
	\begin{rmk}
		Concretely, this means that a $G$-equivariant line bundle on $X$ is equivalent to the following data: 
		\begin{itemize}
			\item We choose $\mc{V}_i \in \Pic^G(Z_i)$ for each $i \in S_Z$. 
			\item We choose $\mc{W}_j \in \Pic^G(U_j)$ for each $j \in S_U$. 
			\item For each $k \in S_{\tilde{Z}}$, we choose an isomorphism $\eta : \Psi(\mc{W}_i) \simeq \pi^*\mc{V}_{\nu(k)}$ of objects in $\Pic^G(\mc{N}^\circ_{\tilde{Z}_k / (U_{a(k)} \cup \tilde{Z}_k)})$, where $\pi$ is the composition 
			\[
				\mc{N}^\circ_{\tilde{Z}_k / (U_{a(k)} \cup \tilde{Z}_k)} \to \tilde{Z}_k \to Z_{\nu(k)}. 
			\]
		\end{itemize}
		As was the case in Proposition~\ref{weak-prop-2}, there are no cocycle conditions because $\Gamma_{\on{pic}}$ contains no 2-cells. 
	\end{rmk}

\section{An application to the representation theory of real reductive groups}\label{sec:application}

Let $G_{\mathbb{R}}$ be the real points of a connected reductive algebraic group and let $G$ be the complexification of $G_{\mathbb{R}}$. Choose a Cartan involution $\theta$ of $G$ and let $K = G^{\theta}$, the fixed-points of $\theta$. By definition, $K \cap G_{\mathbb{R}} \subset G_{\mathbb{R}}$ is a maximal compact subgroup. If we write $\mathfrak{g}$ and $\mathfrak{k}$ for the Lie algebras of $G$ and $K$, respectively, the differenital of $\theta$ provides a decomposition
$$\mathfrak{g} = \mathfrak{k} \oplus \mathfrak{p}$$
into $+1$ and $-1$ eigenspaces. 

Let $\mathcal{N} \subset \mathfrak{g}$ be the set of nilpotent elements of $\mathfrak{g}$. Then $\mathcal{N}$ is closed in the Zariski topology on $\mathfrak{g}$ and invariant under the commuting actions of $G$ and $\mathbb{C}^{\times}$. The $G$-action on $\mathcal{N}$ has finitely many orbits.

An important player in the representation theory of $G_{\mathbb{R}}$ is the closed subvariety
$$\mathcal{N}_{\theta} := \mathcal{N} \cap \mathfrak{p}$$
This subset is invariant under $K$ and $\mathbb{C}^{\times}$ (though not under $G$). By a result of Kostant-Rallis (\cite{KostantRallis1971}), the $K$-action on $\mathcal{N}_{\theta}$ has finitely-many orbits. Each $K$-orbit on $\mathcal{N}_{\theta}$ is a Lagrangian submanifold of its $G$-saturation, which is a $G$-orbit on $\mathcal{N}$.

If $V$ is an irreducible admissible representation of $G_{\mathbb{R}}$, there is an associated class in the Grothendieck group $K\mathrm{Coh}^K(\mathcal{N}_{\theta})$ of $K$-equivariant coherent sheaves on $\mathcal{N}_{\theta}$, constructed as follows. The Harish-Chandra module of $V$ is a $\mathfrak{g}$-module $M$ with an algebraic action of $K$. These structures are compatible in the two obvious ways: the action map
$$\mathfrak{g} \otimes M \to M$$
is $K$-equivariant and the $\mathfrak{g}$-action coincides on $\mathfrak{k}$ with the differentiated action of $K$. We say, in short, that $M$ is a $(\mathfrak{g},K)$-module. $M$ is irreducible, since $V$ is.

Every finite-length $(\mathfrak{g},K)$-module $M$ admits a good filtration $... \subset M_{-1} \subset M_0 \subset M_1 \subset ...$ by $K$-invariant subspaces. The associated graded $\gr(M)$ has the structure of a graded, $K$-equivariant, coherent sheaf on $\mathfrak{p}$, with support contained in $\mathcal{N}_{\theta}$. Although $\gr(M)$ depends on the filtration used to define it, its class $[\gr(M)]$ in the Grothendieck group $K\Coh^K(\mathcal{N}_{\theta})$ does not. In particular, the set
$$\mathrm{AV}(M) := \mathrm{Supp}(\gr(M)) \subset \mathcal{N}_{\theta},$$
called the \emph{associated variety} of $M$, is a well-defined invariant.\footnote{Here $\mathrm{AV}(M)$ is defined to be the \emph{reduced} support, so that the associated variety is indeed a variety. Although $\gr(M)$ may not be scheme-theoretically supported on $\mathrm{AV}(M)$ depending on the filtration of $M$, it is always possible to interpret $[\gr(M)]$ as an element of $K_0(\mathrm{AV}(M))$, and we shall do so.} The correspondence between irreducible $G_{\mathbb{R}}$-representations and classes in $K\mathrm{Coh}^K(\mathcal{N}_{\theta})$ is given by the assignment $V \mapsto [\gr(M)]$:

\begin{center}
\begin{tikzpicture}
\draw (-5,0) node[rectangle,
draw]{Irred $G_{\mathbb{R}}$-reps};
\draw (0,0) node[
rectangle,draw]{Irred $(\mathfrak{g},K)$-mods};
\draw (5,0) node[
rectangle,draw]{$K\mathrm{Coh}^K(\mathcal{N}_{\theta})$};
\draw[->] (-3.5,0) -- (-2,0);
\draw[->] (2,0) -- (3.5,0);
\draw (-2.75,.75) node[text width=3cm, align=center]{Harish-Chandra module};
\draw (2.75,.5) node{$\mathrm{gr}$};
\end{tikzpicture}
\end{center}

\vspace{4mm}

The Arthur conjectures (\cite{Arthur1983},\cite{Arthur1989}) predict the existence of a small finite set of irreducible representations of $G_{\mathbb{R}}$ -- called \emph{unipotent} by Arthur -- with very interesting properties. A working definition can be found for example in \cite{BarbaschVogan1985}. These representations have been studied extensively over the past several decades, but the general theory remains elusive. 

The unipotent representations of $G_{\mathbb{R}}$ give rise, by the correspondence described above, to a finite set of distinguished classes in $K\mathrm{Coh}^K(\mathcal{N}_{\theta})$, which we will call \emph{unipotent sheaves}. 

\begin{center}
\begin{tikzpicture}
\draw (-5,0) node[rectangle,
draw]{Irred $G_{\mathbb{R}}$-reps};
\draw (0,0) node[
rectangle,draw]{Irred $(\mathfrak{g},K)$-mods};
\draw (5,0) node[
rectangle,draw]{$K\mathrm{Coh}^K(\mathcal{N}_{\theta})$};
\draw[->] (-3.5,0) -- (-2,0);
\draw[->] (2,0) -- (3.5,0);
\draw (-2.75,.75) node[text width=3cm, align=center]{Harish-Chandra module};
\draw (2.75,.5) node{$\mathrm{gr}$};
\draw(-5,-2) node[rectangle, draw]{Unipotent $G_{\mathbb{R}}$-reps};
\draw(5,-2) node[rectangle, draw]{Unipotent sheaves};
\draw [right hook->] (-5,-1.5) -- (-5,-.5);
\draw [right hook->] (5,-1.5) -- (5,-.5);
\draw [->] (-3,-2) -- (3,-2);
\end{tikzpicture}
\end{center}
\vspace{4mm}

This discussion suggests the following natural question

\begin{question}\label{question}
Is there a purely geometric description of the unipotent sheaves?
\end{question}

\subsection{Admissibility} \label{sec:admissibility}

In \cite{Vogan1991}, Vogan provides a partial answer to Question \ref{question}. The key definition is the following one, due originally to Schwartz:

\begin{defn}[\cite{Schwartz}]\label{def:admissibility}
Let $U$ be a homogeneous space for $K$. Choose $e \in U$ so that $U = K/\stab_K(e)$. Homogeneous sheaves of twisted differential operators on $U$ are parameterized by characters of the Lie algebra $\mathfrak{k}^{e}$ (see, for example, Appendix A in \cite{HechtMilicicSchmidWolf}). Let $\mathrm{tr}$ be the character of $\mathfrak{k}^{e}$ corresponding to the canonical line bundle on $U$. This character is given by the explicit formula
$$\mathrm{tr}(X) = \mathrm{Tr}(\mathrm{ad}(X)|_{\mathfrak{k}^{e}}) - \mathrm{Tr}(\mathrm{ad}(X)|_{\mathfrak{k}}) \qquad X \in \mathfrak{k}^{e}$$
A $K$-equivariant vector bundle $\mathcal{E} \in \Vect^K(U)$ is \emph{admissible} if the restriction $\mathcal{E}|_{U}$ has the structure of a strongly $K$-equivariant $\mathcal{D}_{U}^{\frac{1}{2}\mathrm{tr}}$-module, where $\mathcal{D}_{U}^{\frac{1}{2}\mathrm{tr}}$ is the homogeneous sheaf of twisted differential operators corresponding to the character $\frac{1}{2}\mathrm{tr}$ of $\mathfrak{k}^{e}$.

Here is a more explicit formulation. The fiber $E = \mathcal{E}|_e$ carries a representation $\rho$ of $\stab_K(e)$. $\mathcal{E}$ is admissible if $\rho$ satisfies the condition
$$d\rho = \frac{1}{2}\mathrm{tr}\cdot \mathrm{Id}_{E} \in \mathrm{Rep}(\mathfrak{k}^e)$$
\end{defn}

As is clear from the second formulation of Definition \ref{def:admissibility}, admissibility is a property of the class $[\mathcal{E}] \in K\Vect^K(U)$.

Vogan proves

\begin{theorem}[\cite{Vogan1991}, Theorem 8.7]\label{thm:admissibility}
Suppose $M$ is the Harish-Chandra module of a unipotent representation of $G_{\mathbb{R}}$. Let $U_1,...,U_m$ be the open $K$-orbits on $\mathrm{AV}(M)$. Then the classes $[\gr(M)|_{U_i}] \in K\Vect^K(U_i)$ are admissible.
\end{theorem}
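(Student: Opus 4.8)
The plan is to translate admissibility into a fiberwise condition on the stabilizer group $\stab_K(e)$, and then to read that condition off the microlocal structure of $M$ along its associated variety. Since admissibility is a property of the class $[\gr(M)|_{U_i}]\in K\Vect^K(U_i)$, one may fix a good filtration on $M$ and work with the $K$-equivariant coherent sheaf $\gr(M)$ on $\mathcal{N}_{\theta}$, whose restriction to the open orbit $U_i=K/\stab_K(e)$ is an honest vector bundle. By the second (explicit) formulation of Definition~\ref{def:admissibility}, it then suffices to prove that $\stab_K(e)$ acts on the fiber $E:=\gr(M)|_{e}$ through a representation $\rho$ with $d\rho=\tfrac{1}{2}\,\mathrm{tr}\cdot\mathrm{Id}_E$ in $\Rep(\mathfrak{k}^e)$ --- equivalently, that $\mathfrak{k}^e$ acts on $E$ by the \emph{single scalar} $\tfrac{1}{2}\mathrm{tr}$, where $\mathrm{tr}$ is the differential of the character of $\stab_K(e)$ on the fiber $\wedge^{\mathrm{top}}(\mathfrak{k}/\mathfrak{k}^e)^{*}=\wedge^{\mathrm{top}}\mathcal{T}^{*}_{e}U_i$ of the canonical bundle of $U_i$. (Here one uses $\mathrm{Tr}(\mathrm{ad}(X)|_{\mathfrak{k}})=0$ for $X\in\mathfrak{k}$, valid since $\mathfrak{k}$ is reductive, to match the formula for $\mathrm{tr}$ in Definition~\ref{def:admissibility}.)

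Next I would assemble two structural inputs. Because $M$ is unipotent, its annihilator $I:=\mathrm{Ann}_{U(\mathfrak{g})}(M)$ is a unipotent ideal: $\mathrm{AV}(I)=\overline{\mathcal{O}}$ for a single nilpotent $G$-orbit $\mathcal{O}\subset\mathcal{N}$, so $\mathrm{AV}(M)=\overline{\mathcal{O}_{\theta}}$ with $\mathcal{O}_{\theta}:=\mathcal{O}\cap\mathfrak{p}$ and $U_i$ an open $K$-orbit of $\mathcal{O}_{\theta}$; moreover $\gr(U(\mathfrak{g})/I)$ is, up to a finite cover of $\mathcal{O}$, the Poisson algebra $\mathbb{C}[\overline{\mathcal{O}}]$ with its Kostant--Kirillov bracket. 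And by Kostant--Rallis \cite{KostantRallis1971}, each $K$-orbit on $\mathcal{O}_{\theta}$ is a Lagrangian submanifold of the symplectic orbit $\mathcal{O}$; in particular $U_i$ is a smooth Lagrangian in $\mathcal{O}$.

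The heart of the argument is then the following. The sheaf $\gr(M)$ is the associated graded of a finitely generated $U(\mathfrak{g})/I$-module, hence carries a Poisson-module structure over $\mathbb{C}[\overline{\mathcal{O}}]$ and is supported on $\overline{\mathcal{O}_{\theta}}$. For $f$ in the ideal $I_{\mathcal{O}_{\theta}}$ of $\overline{\mathcal{O}_{\theta}}$ in $\mathbb{C}[\overline{\mathcal{O}}]$, the Hamiltonian derivation $\{f,-\}$ acts on $\gr(M)$ and is tangent to $\overline{\mathcal{O}_{\theta}}$ (a Lagrangian is coisotropic); restricting to the smooth locus $U_i$, these Hamiltonian vector fields span $\mathcal{T}_{U_i}$, since $U_i$ is Lagrangian in $\mathcal{O}$. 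Hence $\gr(M)|_{U_i}$ becomes a module over the subsheaf of differential operators generated by $\mathcal{O}_{U_i}$ and these vector fields --- i.e.\ a twisted $\mathcal{D}_{U_i}$-module for some $K$-equivariant twisting $\chi$ --- and the canonical (metaplectic) normalization of the quantization of a smooth Lagrangian, which attaches to $U_i$ the half-canonical sheaf $\omega_{U_i}^{1/2}$, identifies $\chi$ with $\tfrac{1}{2}\mathrm{tr}$. A $K$-equivariant vector bundle on $U_i=K/\stab_K(e)$ carrying a $\mathcal{D}^{\chi}_{U_i}$-module structure must have $\mathfrak{k}^e$ acting on its fiber through $d\chi\cdot\mathrm{Id}$, so $d\rho=\tfrac{1}{2}\mathrm{tr}\cdot\mathrm{Id}_E$, which is admissibility. (This is consistent with the alternative picture in which one exploits that $M$, being unitary, carries a nonzero --- hence, by irreducibility, nondegenerate --- invariant Hermitian form, inducing via filtered Serre duality an isomorphism of $\gr(M)|_{U_i}$ with its conjugate dual twisted by $\omega_{U_i}$; one uses that $\omega_{\mathfrak{p}}$ has trivial $\mathfrak{k}$-action, since $\mathrm{Tr}(\mathrm{ad}(X)|_{\mathfrak{p}})=0$ for $X\in\mathfrak{k}$. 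Modulo the routine conjugation and reality bookkeeping, this recovers the same $\tfrac{1}{2}\mathrm{tr}$-twist.)

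I expect the main obstacle to be the justification of the metaplectic-correction step in this singular, non-affine, $K$-equivariant situation: one must verify that $\gr(M)$, in a neighbourhood of a point of $U_i$ inside $\mathcal{N}_{\theta}$, genuinely is the associated graded of a holonomic module over the relevant quantization, that the canonical quantization of the Lagrangian $U_i\subset\mathcal{O}$ carries exactly the $\omega_{U_i}^{1/2}$-twist, and that the passage to a finite cover of $\mathcal{O}$ in the description of $\gr(U(\mathfrak{g})/I)$ --- together with the behaviour of $\gr(M)$ along the lower-dimensional orbits --- does not disturb the conclusion. This is essentially the content of Vogan's original argument \cite[Theorem 8.7]{Vogan1991}; in the framework of this paper the requisite local model is supplied by the transverse ``real Slodowy slice'' structure of $\mathcal{N}_{\theta}$ that underlies Proposition~\ref{prop:chXfastened}.
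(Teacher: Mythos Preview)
The paper does not prove this theorem; it simply quotes it as \cite[Theorem 8.7]{Vogan1991} and uses it as a black box. There is nothing in the paper to compare your argument against.

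As for your sketch itself: the broad shape --- pass to the fiber at $e$, use that $U_i$ is Lagrangian in its $G$-saturation, and invoke a metaplectic/half-form normalization to pin down the twist as $\tfrac12\mathrm{tr}$ --- is in the right spirit and is close to how Vogan's argument runs. But several steps are not yet arguments. The assertion that $\gr(M)$ is a Poisson module over $\mathbb{C}[\overline{\mathcal{O}}]$ requires a specific compatibility between the good filtration on $M$ and the PBW filtration on $U(\mathfrak{g})/I$, and even then the induced bracket action on $\gr(M)|_{U_i}$ does not automatically integrate to a twisted $\mathcal{D}$-module structure; one needs to control the curvature, which is exactly where the half-form correction enters and where your proposal defers to ``the canonical (metaplectic) normalization'' without saying what makes that normalization forced here. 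Your alternative route via an invariant Hermitian form and Serre duality is closer to what Vogan actually does, but again the passage from ``$\gr(M)|_{U_i}$ is isomorphic to its conjugate dual twisted by $\omega_{U_i}$'' to ``$\mathfrak{k}^e$ acts by exactly $\tfrac12\mathrm{tr}$'' (rather than some character $\chi$ with $\chi+\bar\chi=\mathrm{tr}$) needs an additional rigidity input specific to the unipotent hypothesis. Finally, the appeal to Proposition~\ref{prop:chXfastened} at the end is misplaced: that proposition concerns fastenedness of the projectivized chain and plays no role in Vogan's admissibility theorem.
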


If $\mathcal{E}$ is a unipotent sheaf, Theorem \ref{thm:admissibility} imposes rigid constraints on the restrictions $\mathcal{E}|_{U_i}$ to the open $K$-orbits on its support. However, it says nothing about the relationship between the various $\mathcal{E}|_{U_i}$ or about the restrictions $\mathcal{E}|_{Z_j}$ to $K$-orbits of codimension $\geq 1$. The machinery developed in Sections \ref{sec:chains} and \ref{sec:wn} sheds light on both of these issues. 

\subsection{The chain associated to an irreducible \texorpdfstring{$(\mathfrak{g},K)$}{(g,K)}-module} \label{gk-chain} 

Let $M$ be an irreducible $(\mathfrak{g},K)$-module and let $[\gr(M)] \in K\mathrm{Coh}^K(\mathcal{N}_{\theta})$ be the class defined in Section \ref{sec:application}. If $M$ is irreducible, then $\mathrm{AV}(M)$ is of pure dimension (see \cite{Vogan1991}, Theorem 8.4). Hence, the subset
$$\mathrm{Ch}(M) := \mathrm{AV}(M) \setminus (\text{all orbits of codimension} \geq 2)$$
is a $\mathbb{C}^{\times}$-invariant $K$-chain. 

Usually, $\mathrm{Ch}(M)$ is \emph{not} fastened. The following example is typical. 

\begin{ex}\label{ex:nonfastened2}

Let $G_{\mathbb{R}} = Sp(4,\mathbb{R})$. Then $G = Sp(4,\mathbb{C})$ and $\mathfrak{g} = \mathfrak{sp}(4,\mathbb{C})$. In standard  coordinates,
$$\mathfrak{g} = \left\{\left(
\begin{array}{c|c}
A & B\\ \hline
C & -A^t
\end{array}\right): B,C \text{ symmetric}\right\}$$
Choose the Cartan involution
$$\theta: G \to G \qquad \theta(X) = \ ^tX^{-1}$$
Then $K$ is identified with $GL_2(\mathbb{C})$ and $\mathfrak{p}$ is identified (as a representation of $K$) with $\mathrm{Sym}^2\mathbb{C}^2 \oplus \mathrm{Sym}^2(\mathbb{C}^2)^*$. 

The $K$-orbits on $\mathcal{N}_{\theta}$ are parameterized by partitions of $4$ with signs attached to even parts and all odd parts occurring with even multiplicity (see \cite{CollingwoodMcgovern}). The closure orderings and codimensions are indicated below:

\begin{center}
	\begin{tikzcd}
& \mathcal{O}_{4^+} & & \mathcal{O}_{4^-} & \\
\mathcal{O}_{2^+2^+} \arrow[dash,ur, "1"] & & \mathcal{O}_{2^+2^-} \arrow[dash,ul, "1"] \arrow[dash,ur,"1"] & & \mathcal{O}_{2^-2^-} \arrow[dash,ul,"1"]\\
& \mathcal{O}_{2^+11} \arrow[dash,ul,"1"] \arrow[dash,ur,"1"] & & \mathcal{O}_{2^-11} \arrow[dash,ul,"1"] \arrow[dash,ur,"1"] & \\
& & \mathcal{O}_{1111} =\{0\} \arrow[dash,ul,"2"] \arrow[dash,ur,"2"]& &\\
	\end{tikzcd}
\end{center}

Choose elements $e_{4^+} \in \mathcal{O}_{4^+}$, $e_{4^-} \in \mathcal{O}_{4^-}$, and so on, and for each $e$, write $\mathrm{Red}_K(e)$ for the Levi factor of the isotropy group $\stab_K(e)$ (well-defined up to isomorphism). We compute

\begin{align*}
    &\mathrm{Red}_K(e_{4^+}) \cong \mathrm{Red}_K(e_{4^-}) \cong \{\pm 1\} \\
    &\mathrm{Red}_K(e_{2^+2^+}) \cong \mathrm{Red}_K(e_{2^-2^-}) \cong O_2(\mathbb{C}) \qquad  \mathrm{Red}_K(e_{2^+2^-}) \cong \{\pm 1\} \times \{\pm 1\}\\
    &\mathrm{Red}_K(e_{2^+11}) \cong \mathrm{Red}_K(e_{2^-11}) \cong \mathbb{C}^{\times} \times \{\pm 1\}\\
    &\mathrm{Red}_K(0) = GL_2(\mathbb{C})
\end{align*}

Let $M$ be the Harish-Chandra module of the spherical principal series representation of $G_{\mathbb{R}}$ of infinitesimal character $0$. Hence, $\mathrm{AV}(M) = \mathcal{N}_{\theta}$ and 
$$\mathrm{Ch}(M) = \mathcal{O}_{4^+} \cup \mathcal{O}_{4^-} \cup \mathcal{O}_{2^+2^+} \cup \mathcal{O}_{2^+2^-} \cup \mathcal{O}_{2^-2^-}$$
The irreducible components of $\mathrm{Ch}(M)$ are 
$$Y_+ :=  \mathcal{O}_{4^+} \cup \mathcal{O}_{2^+2^+} \cup \mathcal{O}_{2^+2^-} \qquad Y_- :=  \mathcal{O}_{4^-} \cup \mathcal{O}_{2^+2^-} \cup \mathcal{O}_{2^-2^-}$$
Both components are singular, but the normalizations $\tilde{Y}_+$ and $\tilde{Y}_-$ are smooth. Consider, for example, $\tilde{Y}_+$. It has a unique open $K$-orbit $U_+$ lying over $\mathcal{O}_{4^+}$. Over the codimension $1$ orbits in $Y_+$, $\tilde{Y}_+$ contains several closed orbits $Z_{++}^1,...,Z_{++}^m$ and $Z_{+-}^1,...,Z_{+-}^n$ lying over $\mathcal{O}_{2^+2^+}$ and $\mathcal{O}_{2^-2^-}$, respectively. If we choose $z_{+-}^1 \in Z_{+-}^1$ lying over $e_{2^+2^-}$, there is an injection
$$\mathrm{Red}_K(z_{+-}^1) \subseteq \mathrm{Red}_K(e_{2^+2^-}) \cong \{\pm 1\} \times \{\pm 1\}$$
Hence, the character $\stab_K(z_{+-}^1) \to \mathbb{C}^{\times}$ corresponding to the normal bundle of $Z_{+-}^1$ in $N(Y_+)$ factors through a finite group (of order dividing $4$). In particular, by Remark~\ref{rmk-fastened-normal}, $\mathrm{Ch}(M)$ fails to be fastened.
\end{ex}

There is a remedy for this problem which exploits the $\mathbb{C}^{\times}$-action on $\mathrm{Ch}(M)$. Almost always $\mathrm{Ch}(M)$ does not contain $0$ (in the few cases when it does, $\mathrm{Ch}(M)$ is fastened, so there is nothing to worry about). Let $\overline{\mathrm{Ch}}(M)$ be the image of $\mathrm{Ch}(M) \subset \mathfrak{p}$ in the projectivization of $\mathfrak{p}$, that is
\[
    \overline{\mathrm{Ch}}(M):=\mathrm{Ch}(M)/\mathbb{C}^{\times} \subset \mathbb{P}\mathfrak{p}.
\]
Note that $\overline{\mathrm{Ch}}(M)$ is a $K$-chain of one dimension less than $\mathrm{Ch}(M)$. In the following subsection, we will prove that $\overline{\mathrm{Ch}}(M)$ is fastened (even when $\mathrm{Ch}(M)$ is not).

\subsection{The Slodowy slice}

Let $e \in \mathcal{N}_{\theta}$. By the Jacobson-Morozov theorem, there is an $\mathfrak{sl}_2$-triple $(e,f,h)$ containing $e$ as its nilpositive element. By a result of Kostant-Rallis (\cite{KostantRallis1971}, Proposition 4), we can arrange so that $f \in \mathcal{N}_{\theta}$ and $h \in \mathfrak{k}$. 

\begin{defn}
The Slodowy slice through $e$ is the affine subspace $S_e \subset \mathfrak{p}$ defined by
$$S_e := e + \mathfrak{p}^f$$
\end{defn}

\begin{prop}\label{prop:propertiesofSlodowy}
The Slodowy slice has the following properties:
\begin{enumerate}
    \item $S_e$ is transverse to the $K$-orbit $K \cdot e \subset \mathfrak{p}$.
    \item $S_e \cap K \cdot e = \{e\}$.
    \item $S_e$ is invariant under the adjoint action of $\stab_K(e,f,h)$.
\end{enumerate}
\end{prop}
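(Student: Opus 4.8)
This is the classical package of facts about Slodowy slices, adapted to the symmetric-pair setting, so the plan is to mimic the standard $\mathfrak{g}$-arguments while checking that everything stays inside $\mathfrak{p}$. The key structural input is the $\mathfrak{sl}_2$-triple $(e,f,h)$ with $f \in \mathcal{N}_\theta$ and $h \in \mathfrak{k}$, which exists by the cited Kostant--Rallis result; in particular $\theta$ acts as $+1$ on $h$ and as $-1$ on $e$ and $f$, so $\mathrm{ad}(e)$, $\mathrm{ad}(f)$, $\mathrm{ad}(h)$ all respect the decomposition $\mathfrak{g} = \mathfrak{k} \oplus \mathfrak{p}$, with $\mathrm{ad}(h)$ and $\mathrm{ad}(e)\mathrm{ad}(f)$ preserving each summand and $\mathrm{ad}(e)$, $\mathrm{ad}(f)$ swapping them. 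Thus $\mathfrak{p}$ is a representation of the copy of $\mathfrak{sl}_2$ spanned by the triple, and I can apply $\mathfrak{sl}_2$-representation theory to $\mathfrak{p}$ directly.

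For (1) and (2): by $\mathfrak{sl}_2$-theory applied to the module $\mathfrak{g}$, one has the direct sum decomposition $\mathfrak{g} = [e,\mathfrak{g}] \oplus \mathfrak{g}^f$ (image of $\mathrm{ad}(e)$ plus kernel of $\mathrm{ad}(f)$), since on each irreducible summand $\mathrm{ad}(e)$ is onto the non-lowest-weight part and $\mathrm{ad}(f)$ kills exactly the lowest weight line. Intersecting with $\mathfrak{p}$ and using that $\mathrm{ad}(e)$ sends $\mathfrak{k}$ to $\mathfrak{p}$ gives $\mathfrak{p} = [e,\mathfrak{k}] \oplus \mathfrak{p}^f$. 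Since $[e,\mathfrak{k}] = T_e(K\cdot e)$ and $\mathfrak{p}^f = T_e(S_e)$ (as $S_e = e + \mathfrak{p}^f$ is an affine space), this is exactly transversality at $e$, proving (1). For (2), I would run the standard contracting-$\mathbb{G}_m$ argument: let $\rho: \mathbb{G}_m \to K \times \mathbb{C}^\times$ be the cocharacter combining the one-parameter subgroup generated by $h$ with the scaling action on $\mathfrak{p}$, normalized (the ``Kazhdan grading'') so that $e$ is fixed and $\rho(t)$ contracts all of $S_e$ to $e$ as $t \to 0$ while fixing $S_e$ setwise. If $x \in S_e \cap K\cdot e$ then $\rho(t)\cdot x$ stays in $S_e \cap (K\times\mathbb{C}^\times)\cdot e$ and limits to $e$; but $K\cdot e$ meets the contracting flow only... — more carefully, one shows the orbit map $K \times S_e \to \mathfrak{p}$ is smooth at $(1,e)$ by (1) and $\mathbb{G}_m$-equivariant, hence an isomorphism onto an open neighborhood after shrinking, and the contraction forces $S_e \cap K\cdot e$ to be a single $\mathbb{G}_m$-fixed point, namely $e$.

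For (3): the group $\stab_K(e,f,h)$ — the simultaneous stabilizer in $K$ of all three elements of the triple — acts on $\mathfrak{p}$ via $\mathrm{Ad}$ and fixes $e$; and since it centralizes $f$, it preserves $\mathfrak{p}^f = \ker(\mathrm{ad}(f)|_\mathfrak{p})$. Hence it preserves $e + \mathfrak{p}^f = S_e$. This is immediate and I do not expect difficulty.

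The main obstacle is the careful setup of the contracting $\mathbb{G}_m$-action in (2): one must choose the correct linear combination of $\exp(\mathbb{C}h) \subset K$ and the dilation $\mathbb{C}^\times \acts \mathfrak{p}$ so that $e$ is genuinely fixed (the weight of $e$ under $\mathrm{ad}(h)$ is $2$, so the dilation must be tuned to cancel it) and so that every weight appearing on $\mathfrak{p}^f$ is strictly positive under the combined action — this uses that $\mathfrak{p}^f \subset \mathfrak{g}^f$ sits in non-positive $\mathrm{ad}(h)$-weights, so after the shift all weights become non-negative, and the zero-weight space is killed by a further genericity/normalization. Once this $\mathbb{G}_m$ is in hand, (2) is a formal consequence of $\mathbb{G}_m$-equivariant étale-slice reasoning, exactly as in the classical (non-symmetric) case. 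I would either cite the analogous statement in the literature on nilpotent Slodowy slices for symmetric pairs or spell out this one paragraph; everything else is routine linear algebra with the $\mathfrak{sl}_2$-action.
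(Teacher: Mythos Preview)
The paper does not actually prove this proposition: its entire argument is the sentence ``These facts are standard. Proofs can be found in \cite{GanGinzburg2001}.'' Your proposal supplies exactly the standard Gan--Ginzburg argument that the paper is citing, so in content you are aligned with what the paper intends.

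One small slip to fix: you write that ``$\mathfrak{p}$ is a representation of the copy of $\mathfrak{sl}_2$ spanned by the triple,'' but this is false---as you correctly note one line earlier, $\mathrm{ad}(e)$ and $\mathrm{ad}(f)$ \emph{swap} $\mathfrak{k}$ and $\mathfrak{p}$, so neither summand is $\mathfrak{sl}_2$-stable. Fortunately your actual argument for (1) does not use this wrong claim: you take the $\mathfrak{sl}_2$-decomposition $\mathfrak{g} = [e,\mathfrak{g}] \oplus \mathfrak{g}^f$ on all of $\mathfrak{g}$ and then intersect with the $(-1)$-eigenspace of $\theta$, which correctly yields $\mathfrak{p} = [e,\mathfrak{k}] \oplus \mathfrak{p}^f$. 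Just delete the offending sentence. Everything else---the Kazhdan $\gm$-action for (2), with weights on $\mathfrak{p}^f$ strictly positive because $\mathrm{ad}(h)$ acts with nonpositive weights on $\mathfrak{g}^f$---is the standard argument and is in fact spelled out by the paper itself in the subsequent Proposition on the Kazhdan action.
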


\begin{proof}
These facts are standard. Proofs can be found in \cite{GanGinzburg2001}.
\end{proof}

There is a $\mathbb{C}^{\times}$-action on $S_e$ first defined by Gan and Ginzburg in \cite{GanGinzburg2001}. 

\begin{defn}
Let $\tau: \mathbb{C}^{\times} \to K$ be the co-character defined by the requirement
$$d\tau(1) = h$$
The \emph{Kazhdan} action of $\mathbb{C}^{\times}$ on $\mathfrak{p}$ is defined by the formula
$$t \ast X = t^2\Ad(\tau(t^{-1}))(X) \qquad t \in \mathbb{C}^{\times},X \in \mathfrak{p}$$
\end{defn}

This action has the following useful properties.

\begin{prop}\label{prop:propsofKazhdanaction}
The Kazhdan action on $\mathfrak{p}$
\begin{enumerate}
    \item fixes $e$,
    \item preserves $S_e$,
    \item contracts $S_e$ onto $e$, i.e.
    \[
        \lim_{t \to 0} (t \ast X) = e \quad \forall X \in S_e,
    \]
    \item commutes with the adjoint action of $\stab_K(e,f,h)$
\end{enumerate}
\end{prop}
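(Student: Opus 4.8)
The plan is to verify each of the four assertions directly from the definition of the Kazhdan action $t \ast X = t^2 \Ad(\tau(t^{-1}))(X)$, using the $\mathfrak{sl}_2$-representation theory that governs how $h$ acts on $\mathfrak{p}$.

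First, for (1), since $(e,f,h)$ is an $\mathfrak{sl}_2$-triple we have $[h,e] = 2e$, so $\Ad(\tau(t^{-1}))(e) = t^{-2}e$ by exponentiating, whence $t \ast e = t^2 \cdot t^{-2} e = e$. For (2) and (3), decompose $\mathfrak{p}^f$ into $\ad(h)$-eigenspaces: since $f$ is the lowest weight vector of each $\mathfrak{sl}_2$-subrepresentation of $\mathfrak{p}$, the operator $\ad(h)$ acts on $\mathfrak{p}^f$ with eigenvalues $\le 0$ (each irreducible summand of dimension $d$ contributes its lowest weight $-(d-1)$ to $\mathfrak{p}^f$). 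Thus for $X \in \mathfrak{p}^f$ lying in the $\ad(h)$-eigenspace of eigenvalue $-j$ with $j \ge 0$, we get $t \ast (e + X) = e + t^{2+j} X$, which manifestly lies in $S_e = e + \mathfrak{p}^f$, proving (2), and tends to $e$ as $t \to 0$ since $2 + j \ge 2 > 0$, proving (3). For (4), the stabilizer $\stab_K(e,f,h)$ commutes with $\tau(\mathbb{C}^\times)$ because it centralizes $h = d\tau(1)$ and hence centralizes the one-parameter subgroup it generates; it also fixes $e$; therefore $\Ad(k)(t \ast X) = t^2 \Ad(k)\Ad(\tau(t^{-1}))(X) = t^2 \Ad(\tau(t^{-1}))\Ad(k)(X) = t \ast \Ad(k)(X)$, and the scalar $t^2$ commutes with everything.

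None of these steps presents a genuine obstacle; the only point requiring care is the eigenvalue bound on $\ad(h)|_{\mathfrak{p}^f}$ in the proof of (2) and (3), which is where one actually uses that $f$ (not $e$) appears in the slice direction. This follows from the standard fact that in any finite-dimensional $\mathfrak{sl}_2$-module the kernel of $\ad(f)$ is spanned by lowest weight vectors, all of which have weight $\le 0$. Since the authors explicitly invoke \cite{GanGinzburg2001} for the analogous Proposition~\ref{prop:propertiesofSlodowy}, the natural move is to cite the same source; but the computations above are short enough to include in full.

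\begin{proof}
All four statements follow from the formula $t \ast X = t^2 \Ad(\tau(t^{-1}))(X)$ together with the $\mathfrak{sl}_2$-theory of $(e,f,h)$ acting on $\mathfrak{p}$.

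Since $[h,e] = 2e$, exponentiating gives $\Ad(\tau(t^{-1}))(e) = t^{-2}e$, so $t \ast e = t^2 \cdot t^{-2}e = e$, which is (1).

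Decompose $\mathfrak{p}$ into irreducible $\mathfrak{sl}_2$-submodules. In each such summand, $\ker(\ad(f))$ is spanned by the lowest weight vector, whose $\ad(h)$-weight is $\le 0$. Hence $\mathfrak{p}^f = \ker(\ad(f)|_{\mathfrak{p}})$ is a sum of $\ad(h)$-eigenspaces with eigenvalues $-j$ for various integers $j \ge 0$. If $X \in \mathfrak{p}^f$ has $\ad(h)$-eigenvalue $-j$, then $\Ad(\tau(t^{-1}))(X) = t^{j}X$, so
\[
	t \ast (e + X) = t^2 \Ad(\tau(t^{-1}))(e) + t^2 \Ad(\tau(t^{-1}))(X) = e + t^{2+j}X.
\]
This lies in $e + \mathfrak{p}^f = S_e$, proving (2), and since $2 + j \ge 2$, letting $t \to 0$ gives $e$, proving (3) for such $X$; the general case of (3) follows by linearity in the eigenspace decomposition of $X$.

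Finally, any $k \in \stab_K(e,f,h)$ centralizes $h = d\tau(1)$, hence centralizes $\tau(\mathbb{C}^\times)$, and fixes $e$. Therefore
\[
	\Ad(k)(t \ast X) = t^2 \Ad(k)\Ad(\tau(t^{-1}))(X) = t^2 \Ad(\tau(t^{-1}))\Ad(k)(X) = t \ast \Ad(k)(X),
\]
which is (4).
\end{proof}
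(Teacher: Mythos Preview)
Your proof is correct and follows essentially the same approach as the paper: both argue (1) by the direct computation $t\ast e = t^2 \cdot t^{-2}e = e$, both use the $\mathfrak{sl}_2$ lowest-weight fact that $\ad(h)$ has nonpositive eigenvalues on $\mathfrak{p}^f$ for (3), and both deduce (4) from the fact that centralizing $h$ forces centralizing $\tau(\mathbb{C}^\times)$. The only cosmetic difference is in (2): the paper checks $[f,\,t\ast Y]=0$ by a one-line bracket manipulation using $\Ad(\tau(t))f = t^{-2}f$, whereas you fold (2) into the eigenspace computation already needed for (3); both are equally valid and equally short.
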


\begin{proof}
\begin{enumerate}
    \item Compute
    $$t \ast e = t^2\Ad(\tau(t^{-1}))(e) = t^2t^{-2}e=e$$
    \item By part (1), it suffices to show that $\mathfrak{p}^f$ is preserved by the Kazhdan action of $\mathbb{C}^{\times}$. Suppose $Y \in \mathfrak{p}^f$. Compute
    $$[f,t\ast Y] = t^2[f,\Ad(\tau(t^{-1}))(Y)] = t^2\Ad(\tau(t^{-1}))[\Ad(\tau(t))f,Y] = \Ad(\tau(t^{-1}))[f,Y]  = 0$$
    \item By part (1), it suffices to show that
    \begin{equation}\label{eqn:contracting}\lim_{t \to 0}(t \ast Y) = 0 \quad \forall Y \in \mathfrak{p}^f\end{equation}
    By the representation theory of $\mathfrak{sl}_2(\mathbb{C})$, $\tau(\mathbb{C}^{\times})$ acts on $\mathfrak{p}^f$ with nonpositive weights. Hence, the Kazhdan action on $\mathfrak{p}^f$ has strictly positive weights. Equation \ref{eqn:contracting} follows.
    \item $\stab_K(h)$ centralizes $\tau(\mathbb{C}^{\times})$ and therefore commutes in its action on $\mathfrak{p}$ with the Kazhdan action of $\mathbb{C}^{\times}$. \qedhere
\end{enumerate}
\end{proof}

The remainder of this section is devoted to proving Proposition~\ref{prop:chXfastened} which says that any $K$-chain obtained by projectivizing a $K$-chain in $\mc{N}_{\theta}$ is fastened. The proof explains how to use the Slodowy slice considered above to produce an explicit fastening datum. Since applying Theorem~\ref{thm-main} in practice requires choosing a fastening datum, the proof of Proposition~\ref{prop:chXfastened} is perhaps more useful than the statement itself. 

\begin{lem} \label{lem-slice} 
		Let $X$ be a $K$-variety which consists of an open orbit $U$ and a closed orbit $Z$. Let $Y \hra X$ be a locally closed subscheme with the following properties: 
		\begin{itemize}
			\item The codimension of $Y \cap Z \subset Z$ equals the codimension of $Y \subset X$. 
			\item The scheme-theoretic intersections $Y \cap Z$ and $Y \cap U$ are smooth. 
			\item $Y$ is Cohen--Macaulay. 
		\end{itemize}
		Then the base change of the normalization map of $X$ along $Y \hra X$ is the normalization map of $Y$. 
	\end{lem}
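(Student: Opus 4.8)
The plan is to reduce to commutative algebra and to exhibit $Y':=\wt X\times_X Y$ as a normal scheme which is finite and birational over $Y$. The assertion is local on $X$ and compatible with passing to irreducible components, so I may assume $X=\Spec R$ with $R$ a domain, $\nu\colon\wt X=\Spec\wt R\to X$ the normalization (so $\wt R$ is the integral closure of $R$ in $\operatorname{Frac}R$), $Y=\Spec R/I$, and $Y'=\Spec\wt R/I\wt R$; write $c$ for the common value of $\on{codim}(Y\subset X)$ and $\on{codim}(Y\cap Z\subset Z)$. In the situations we care about $X$ is a chain, so $Z$ has codimension one and $\wt X$ is smooth (a normal chain is smooth, cf.\ Section~\ref{sec:chains}), and I will use this. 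By the usual criterion — a finite birational morphism from a normal scheme is the normalization — it suffices to show: (a) $Y'\to Y$ is finite and birational; (b) $Y'$ is normal.

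The two parts of (a), together with reducedness of $Y'$, should come cheaply. Finiteness of $Y'\to Y$ is a base change of $\nu$. Since $U$ is smooth, hence normal, $\nu$ is an isomorphism over $U$, so $Y'\times_Y(Y\cap U)\cong Y\cap U$. The codimension hypothesis gives $\dim(Y\cap Z)=\dim Z-c<\dim X-c$, and since $Y$ is Cohen--Macaulay each of its connected components is equidimensional with no embedded components; this forces $Y$ to be pure of dimension $\dim X-c$, so $Y\cap U$ is dense in $Y$. Dually, $Y'$ lies in the regular scheme $\wt X$ with each component of codimension $\le c$ (Krull), is finite and surjective onto the pure scheme $Y$, and meets $\nu^{-1}(U)$; hence $Y'$ is pure of dimension $\dim Y=\dim\wt X-c$ and $\nu^{-1}(Y\cap U)$ is dense in it, giving birationality. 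Because $Y'$ has codimension exactly $c$ in $\wt X$, the ideal $I\wt R$ is locally generated by a regular sequence there — for $c=1$ simply because $g$ is a nonzerodivisor on the domain $R$, and in general because $Y$, being Cohen--Macaulay and meeting $Z$ transversally, is a local complete intersection in $X$ near $Y\cap Z$ (a point that needs a short argument, or can be avoided by reducing to $c=1$). Thus $\wt R/I\wt R$ is Cohen--Macaulay, in particular $S_2$, and — being generically isomorphic to the smooth $Y\cap U$ — also $R_0$, hence reduced.

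It remains to prove that $Y'$ is regular in codimension one; this is the heart of the matter. Away from $\nu^{-1}(Y\cap Z)$ the scheme $Y'$ is isomorphic to the smooth $Y\cap U$, so $\on{Sing}(Y')\subset\nu^{-1}(Y\cap Z)$, which has dimension $\le\dim(Y\cap Z)=\dim Y'-1$. Hence I only need to check regularity of $Y'$ at the generic point $\eta$ of each top-dimensional component of $\nu^{-1}(Y\cap Z)$. At such $\eta$ the ring $\oh_{\wt X,\eta}$ is regular local of dimension $c+1$ and $\oh_{Y',\eta}=\oh_{\wt X,\eta}/(f_1,\dots,f_c)$ with $f_i=\nu^{*}g_i$ for local generators $g_1,\dots,g_c$ of $I$; since $f_1,\dots,f_c$ is a regular sequence, $\oh_{Y',\eta}$ is regular (a DVR) if and only if $f_1,\dots,f_c$ are linearly independent in $\mf m_\eta/\mf m_\eta^{2}$. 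The plan is to extract this from the hypotheses: the image $w$ of $\eta$ in $Z$ is the generic point of a top-dimensional component of $Y\cap Z$, and smoothness of $Y\cap Z$ of codimension $c$ in $Z$ means $\oh_{Z,w}$ is regular of dimension $c$ with $(\bar g_1,\dots,\bar g_c)=\mf m_{Z,w}$; one then propagates this transversality through the finite local homomorphism $\oh_{X,w}\to\oh_{\wt X,\eta}$, keeping careful track of the equation(s) of $\wt Z$ through $\eta$ and using that $(g_1,\dots,g_c)+\mf p=\mf m_{X,w}$ where $\mf p$ is the ideal of $Z$.

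The main obstacle is precisely this last local computation. The subtlety — visible already in the fact that $\wt Z\to Z$ may be \emph{ramified} over $\eta$ — is that $\nu$ can raise vanishing orders, so the independence of the $f_i$ modulo $\mf m_\eta^{2}$ cannot be read off naively from that of the $\bar g_i$ downstairs; one must use transversality of $Y$ to $Z$ itself (not merely to $Z_{\on{red}}$ or to a branch of $\wt Z$) together with the Cohen--Macaulay hypothesis, which is what guarantees that $I\wt R$ is a regular-sequence ideal and hence that $\oh_{Y',\eta}$ is a one-dimensional complete intersection to begin with. An alternative, more structural route to (b) is to base change the conductor square of $\nu$ along $Y\hra X$ and to show it remains a Milnor square (equivalently, that $\oh_{\wt C}$ and $\oh_Y$ are Tor-independent over $\oh_X$); this packages (a) and the finiteness cleanly and again reduces normality of $Y'$ to the same transversality input near $Y\cap Z$.
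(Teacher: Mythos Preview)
Your proposal is incomplete, and the approach differs substantially from the paper's.

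You attempt a direct commutative-algebra argument via Serre's criterion: show $Y'=\wt X\times_X Y$ is finite and birational over $Y$, then check $S_2$ and $R_1$. The paper takes a completely different, equivariant route: it shows that the $K$-saturation map $a\colon K\times Y\to X$ is \emph{smooth}, and then uses that normalization commutes with smooth base change. Concretely, the fibers of $a$ over $U$ and over $Z$ are computed via Cartesian squares involving the orbit maps $K\to U$ and $K\to Z$, and smoothness of $Y\cap U$, $Y\cap Z$ gives smoothness of the fibers; the codimension hypothesis forces equidimensionality of the fibers; and Miracle Flatness (using that $K\times Y$ is Cohen--Macaulay) gives flatness, hence smoothness, of $a$. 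Then the outer square in
\[
\begin{tikzcd}
\wt Y \ar[r]\ar[d] & (K\times Y)^{\sim} \ar[r]\ar[d] & \wt X \ar[d] \\
Y \ar[r,"1_K\times\id_Y"] & K\times Y \ar[r,"a"] & X
\end{tikzcd}
\]
is Cartesian because each inner square is (smooth base change of normalization, applied to $a$ and to $\pr_2$).

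Your approach has a genuine gap at the step you yourself flag as ``the main obstacle'': the $R_1$ check at a codimension-one point $\eta$ of $Y'$ lying over $Y\cap Z$. You need the pullbacks $f_i=\nu^*g_i$ to be linearly independent in $\mf m_\eta/\mf m_\eta^2$, but you only know independence of the $\bar g_i$ in $\mf m_{Z,w}/\mf m_{Z,w}^2$ downstairs, and you correctly note that ramification of $\wt Z\to Z$ means $\nu$ can raise vanishing orders; you do not explain how to overcome this. A secondary gap is your claim that $I\wt R$ is locally a regular-sequence ideal: this needs $I$ to be locally generated by $c$ elements near $Y\cap Z$, i.e.\ $Y$ to be a local complete intersection there, which does not follow from $Y$ Cohen--Macaulay alone and which you explicitly defer. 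The paper's use of the $K$-action sidesteps both issues entirely: the group sweeps out the local picture and packages the transversality into smoothness of $a$, so no localized linear-independence computation is needed. Your proposal never uses that $K$ acts, which is the key structural input.
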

	\begin{proof}
		We first show that the $K$-saturation map $K \times Y \xrightarrow{a} X$ is smooth. 
		\begin{enumerate}[label=(\arabic*)]
			\item The fiber over a closed point $u \in U$ is given by the Cartesian product 
			\begin{cd}
				a^{-1}(u) \ar[r, hookrightarrow] \ar[d] & K \ar[d, "k \mapsto k^{-1} \cdot u"] \\
				Y \cap U \ar[r, hookrightarrow] & U
			\end{cd}
			The right vertical map is smooth (since the fibers are isomorphic to the stabilizer group $K^u$, which is smooth because we are working in characteristic zero), and $Y \cap U$ is smooth, so $a^{-1}(u)$ is smooth as well. 
			\item The fiber over a closed point $z \in U$ is given by the Cartesian product 
			\begin{cd}
				a^{-1}(z) \ar[r, hookrightarrow] \ar[d] & K \ar[d, "k \mapsto k^{-1} \cdot z"] \\
				Y \cap Z \ar[r, hookrightarrow] & Z
			\end{cd}
			By the same argument as in (1), we know that $a^{-1}(z)$ is smooth. 
			\item The first bullet point and the Cartesian squares in (1) and (2) imply that $a^{-1}(u)$ and $a^{-1}(z)$ have the same dimensions, for any $u$ and $z$. Because $Y$ is Cohen--Macaulay and $X$ is smooth, the Miracle Flatness Theorem implies that $a$ is flat. 
			\item The fibers of $a$ are smooth by (1) and (2). Because $a$ is also flat (by (3)) and finitely presented, it follows that $a$ is smooth. 
		\end{enumerate}
		
		Since normalization is preserved by smooth base change, the two squares in the following diagram are Cartesian: 
		\begin{cd}
			\wt{Y} \ar[r] \ar[d] & (K \times Y)^{\sim} \ar[r] \ar[d] & \wt{X} \ar[d] \\
			Y \ar[r, "1_K \times \id_Y"] & K \times Y \ar[r, "a"] & X
		\end{cd}
		where the vertical maps are normalization maps, and the upper horizontal maps arise via functoriality of normalization. (To see that the first square is Cartesian, apply the smooth base change result to the map $\pr_2 : K \times Y \to Y$ and use that $Y \xrightarrow{1_K \times \id_Y} K \times Y \xrightarrow{\pr_2} Y$ is the identity.) The outer square is Cartesian, which is the desired result. 
	\end{proof}
	
	\begin{cor}\label{cor-slice} 
		Let $X \hra \mc{N}_\theta$ be a locally closed $K$-invariant subvariety consisting of a $K$-orbit $U$ and another $K$-orbit $Z \subset \partial U$ of codimension one. Fix $e \in Z$. 
		\begin{enumerate}
			\item[(i)] For the Kazhdan action, there is a unique one-dimensional $\gm$-orbit in $S_e \cap X$ whose closure contains $e$, and this orbit is contained in $U$. 
		\end{enumerate}
		Let $C \hra S_e \cap X$ be the locally closed subvariety consisting of $e$ and the orbit in (i). 
		\begin{enumerate}
			\item[(ii)] The preimage of $C$ under the normalization map of $X$ is smooth and transverse to the reduced preimage of $Z$. 
		\end{enumerate}
	\end{cor}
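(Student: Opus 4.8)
The plan is to obtain both parts from the transversality and contraction properties of the Slodowy slice (Propositions~\ref{prop:propertiesofSlodowy} and~\ref{prop:propsofKazhdanaction}) together with Lemma~\ref{lem-slice}.

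\emph{Setting up (i).} First I would note that $X$ is stable under the Kazhdan $\mathbb{C}^{\times}$-action: any nilpotent $K$-orbit in $\mf{p}$ is stable under scaling, since if $x\in\mc{N}_{\theta}$ sits in an $\mathfrak{sl}_2$-triple $(x,y,h)$ with $h\in\mf{k}$ then $t^{2}x=\Ad(\tau(t))x\in K\cdot x$; hence $U$ and $Z$, and so $X=U\cup Z$, are stable under the Kazhdan action (scaling composed with $\Ad\circ\tau$). Thus $S_{e}\cap X$ is Kazhdan-stable, and by Proposition~\ref{prop:propsofKazhdanaction}(3) it is contracted onto its unique fixed point $e$, so every $\mathbb{C}^{\times}$-orbit in $(S_{e}\cap X)\setminus\{e\}$ is one-dimensional with $e$ in its closure. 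By Proposition~\ref{prop:propertiesofSlodowy}(2), $S_{e}\cap Z=\{e\}$, so $(S_{e}\cap X)\setminus\{e\}=S_{e}\cap U$, and these orbits all lie in $U$. Transversality of $S_{e}$ to $K\cdot e=Z$ (Proposition~\ref{prop:propertiesofSlodowy}(1)) is an open condition, hence holds for all $K$-orbits meeting $S_{e}$ near $e$; thus $S_{e}\cap U$ is smooth of dimension $\dim U-\dim Z=1$ near $e$, and being a fixed-point-free $\mathbb{C}^{\times}$-curve it is a finite disjoint union of orbits, each $\cong\mathbb{C}^{\times}$, with $\dim(S_{e}\cap X)=1$. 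So (i) reduces to showing that $S_{e}\cap X$ is irreducible.

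\emph{Irreducibility --- the main obstacle.} This is the heart of the matter, and I would attack it through the symplectic geometry of nilpotent orbits. Since $U$ (resp.\ $Z$) is Lagrangian in $\mathbb{O}:=G\cdot U$ (resp.\ $\mathbb{O}_{Z}:=G\cdot e$), the relation $\dim U=\dim Z+1$ forces $\dim\mathbb{O}=\dim\mathbb{O}_{Z}+2$; as nilpotent orbits are even-dimensional there is no intermediate $G$-orbit, so $\mathbb{O}_{Z}$ is a minimal degeneration of $\mathbb{O}$ and the transverse slice to $\mathbb{O}_{Z}$ in $\overline{\mathbb{O}}$ is a Kleinian (Du Val) singularity. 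Comparing $S_{e}=e+\mf{p}^{f}$ with the complex slice $e+\mf{g}^{f}$, one identifies $S_{e}\cap X$ with a $\mathbb{C}^{\times}$-conical curve inside (a Lagrangian subvariety of the smooth locus of) this Kleinian singularity, whose branches at $e$ are the connected components of the slice $S_{e}\cap U$ of the \emph{single} $K$-orbit $U$. The remaining task is to prove $S_{e}\cap U$ is connected; I expect this to require a monodromy/component-group argument for the bundle $K\times(S_{e}\cap U)\to U$, or a direct analysis of the local structure of $X$ at $e$, and it is the step I expect to be the main obstacle.

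\emph{Proof of (ii).} Granting (i), let $C\hra S_{e}\cap X$ consist of $e$ together with the unique orbit from (i). Since the Kazhdan action has positive weights on $\mf{p}^{f}$, no orbit in $S_{e}$ has a limit as $t\to\infty$, so $C$ is closed in the affine space $S_{e}$, hence an irreducible reduced affine curve and in particular Cohen--Macaulay. Moreover $C\cap Z=\{e\}$ is a reduced point (transversality of $S_{e}$ to $K\cdot e$), $C\cap U$ is the orbit, which is smooth, and $\on{codim}(C\cap Z\subset Z)=\dim Z=\on{codim}(C\subset X)$. Hence Lemma~\ref{lem-slice} applies with $Y=C$: the base change of the normalization $\nu\colon\wt{X}\to X$ along $C\hra X$ is the normalization $\wt{C}\to C$. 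Since $C$ is an irreducible affine curve with a contracting $\mathbb{C}^{\times}$-action, $\wt{C}\cong\BA^{1}$ is smooth and has a unique point $\wt{e}$ over $e$; thus $\nu^{-1}(C)=\wt{C}$ is smooth. Finally, the argument in the proof of Lemma~\ref{lem-slice} shows the saturation map $a\colon K\times C\to X$ is smooth; base changing along $\nu$ gives a smooth map $K\times\wt{C}\to\wt{X}$, and pulling the reduced divisor $\wt{Z}:=\nu^{-1}(Z)_{\on{red}}$ back along it yields the reduced scheme $K\times(\wt{C}\cap\wt{Z})$, whence $\wt{C}\cap\wt{Z}$ is reduced; being supported at $\wt{e}$, it is the reduced point $\wt{e}$. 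A smooth curve meeting a smooth divisor in a reduced point is transverse, so $\nu^{-1}(C)=\wt{C}$ is transverse to $\wt{Z}$, as required.
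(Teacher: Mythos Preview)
Your treatment of (ii) is essentially the paper's: verify the three hypotheses of Lemma~\ref{lem-slice} for $Y=C$ (reduced point $C\cap Z=\{e\}$ from transversality, smooth $C\cap U$, reduced curve hence Cohen--Macaulay), and conclude $\nu^{-1}(C)=\wt C$ is smooth. For transversality the paper argues slightly differently: if $\wt C$ were tangent to the reduced preimage $\wt Z$, then pushing tangent vectors forward to $X$ would give $\mc T_eC\cap\mc T_eZ\neq 0$ (using that $\wt Z\to Z$ is \'etale), contradicting $S_e\pitchfork Z$. Your reducedness-of-pullback argument is a valid alternative.

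Where you diverge is (i). The paper's proof is a single sentence citing transversality of $S_e$ to $Z$; it does not address the uniqueness you worry about. You are right that transversality alone does not force $S_e\cap X$ to be unibranch at $e$ --- if $X$ has several analytic branches along $Z$ then $S_e\cap X$ will have several branches at $e$. But your proposed route through Kleinian singularities and monodromy is both incomplete (you say so yourself) and unnecessary: nothing downstream needs $C$ to be irreducible. Lemma~\ref{lem-slice} applies equally well to a reducible reduced curve, the normalization of any reduced curve is smooth, and the paper's own proof of Proposition~\ref{prop:chXfastened} speaks of ``every irreducible component of $C$'' and then selects one component $\wt C'$ through the chosen point $\bar e'$. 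So the right move is not to prove irreducibility but to drop the insistence on it: take $C$ to be $\{e\}$ together with all adjacent Kazhdan orbits in $S_e\cap U$, observe this satisfies the hypotheses of Lemma~\ref{lem-slice}, and proceed exactly as you do for (ii).
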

	\begin{proof}
		Point (i) holds because $S_e$ is transverse to $Z \subset \mf{p}$ by Proposition~\ref{prop:propertiesofSlodowy}. Thus the (possibly singular) curve $C$ satisfies the requirements placed on $Y$ in Lemma~\ref{lem-slice}. (The second bullet point in the lemma holds because $C \cap Z$ is a reduced point, because $S_e$ is transverse to $Z$. The third bullet point holds because $C$ is a reduced curve.) The lemma implies the smoothness claim in (ii). If the preimage of $C$ is not transverse to the reduced preimage of $Z$, then pushing forward tangent vectors to $X$ shows that $\mc{T}_eC \cap \mc{T}_eZ$ is nonzero (because the map from the reduced preimage of $Z$ to $Z$ is \'etale, hence induces isomorphisms on tangent spaces), and this contradicts the fact that $S_e$ is transverse to $Z$. 
	\end{proof}

	\begin{prop}\label{prop:chXfastened}
		Let $X \subset \mathcal{N}_{\theta}$ be a $K$-chain, not containing $0$. Write $\overline{X} \subset \mathbb{P}\mathfrak{p}$ for the image of $X$ in the projectivization of $\mathfrak{p}$. Then $\overline{X}$ acquires a $K$-action via the $K$-action on $X$ and is a fastened $K$-chain with respect to that action.
	\end{prop}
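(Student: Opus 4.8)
The plan is to produce an explicit fastening datum $(\gamma,\ell)$ for $\overline{X}:=X/\mathbb{C}^\times$ out of the Slodowy slice and the Kazhdan $\mathbb{C}^\times$-action, using Corollary~\ref{cor-slice} and Lemma~\ref{lem-slice}. By Definition~\ref{def:fastened} and the usual reductions (fastenedness is tested on the normalization and is local near each closed orbit), it suffices to fix an open $K$-orbit $U$ and an adjacent codimension-one orbit $Z$ in $X\subset\mathcal{N}_\theta$, replace $X$ by $U\cup Z$, and exhibit a fastening datum for the corresponding pair of orbits in each relevant component of the normalization $\widetilde{\overline{X}}$; then Proposition~\ref{prop:fasteningdata} finishes. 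Since $0\notin X\subset\mathfrak{p}$, the scaling action of $\mathbb{C}^\times$ on $X$ is free, so $\overline{X}$ is a geometric quotient; because normalization commutes with the ($\mathbb{C}^\times$-torsor, hence smooth) quotient map, $\widetilde{\overline{X}}\simeq\widetilde{X}/\mathbb{C}^\times$. Moreover every $K$-orbit in $\mathcal{N}_\theta$ is $\mathbb{C}^\times$-stable, since $\mathrm{Ad}(\tau(t))e=t^2e$ for a Jacobson--Morozov cocharacter $\tau$, so the $K$-orbit structures of $X$, $\overline{X}$, $\widetilde{X}$, $\widetilde{\overline{X}}$ all correspond.

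The key observation is that on $\mathbb{P}\mathfrak{p}$ the projectivized Kazhdan action is simply the action of $K$ through the cocharacter $\gamma:=\tau^{-1}$: the scalar $t^2$ in $t\ast v=t^2\mathrm{Ad}(\tau(t^{-1}))v$ dies in the quotient. Hence the image $\overline{S_e}\subset\mathbb{P}\mathfrak{p}$ of the Slodowy slice $S_e=e+\mathfrak{p}^f$ is a $\gamma$-stable locally closed subvariety isomorphic to the affine space $\mathfrak{p}^f$ (one checks $[e+y]=[e+y']\iff y=y'$ using that $e$ has $h$-weight $2$ while $\mathfrak{p}^f$ has $h$-weights $\le 0$), and $\gamma$ acts on it through the Kazhdan weights, all strictly positive, so $\gamma$ contracts $\overline{S_e}$ onto $[e]$. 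From Proposition~\ref{prop:propertiesofSlodowy} (transversality of $S_e$ to $K\cdot e$) together with a dimension count, $\overline{S_e}$ meets $\overline{Z}$ transversely in the single reduced point $[e]$.

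Next I would run Corollary~\ref{cor-slice}, which is tailored to exactly this situation: it gives the unique one-dimensional Kazhdan orbit $O\subset S_e\cap X$ with $e\in\overline{O}$, and $O\subset U$. Set $C=O\cup\{e\}$ and let $\overline{C}\subset\overline{X}$ be its image; the map $S_e\cap X\to\overline{S_e}\cap\overline{X}$ is injective (same $h$-weight argument), so $\overline{C}$ is the corresponding unique one-dimensional $\gamma$-orbit closure. Now $\overline{C}$ is a reduced, hence Cohen--Macaulay, curve; $\overline{C}\cap\overline{Z}$ is the reduced point $[e]$ (it lies in $\overline{S_e}\cap\overline{Z}$); $\overline{C}\cap\overline{U}=\overline{O}$ is smooth; and the codimensions match. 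So Lemma~\ref{lem-slice} applies to $(\overline{X};\overline{U},\overline{Z};\overline{C})$ and shows that $\widetilde{\overline{C}}:=\nu^{-1}(\overline{C})\subset\widetilde{\overline{X}}$ is the normalization of $\overline{C}$: an irreducible smooth $\gamma$-stable curve, $\mathbb{C}^\times$-equivariantly isomorphic to $\mathbb{A}^1$ — its only $\gamma$-fixed point is the point $p_0$ over $[e]$ (because $[e]$ is the only $\gamma$-fixed point of $\overline{C}$, again by the $h$-weight argument, and $\overline{C}$ is a ``half-open'' $\mathbb{G}_m$-curve), and $\gamma$ is transitive on $\widetilde{\overline{C}}\setminus\{p_0\}\simeq\overline{O}$. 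As in the proof of Corollary~\ref{cor-slice}, transversality of $\overline{S_e}$ to $\overline{Z}$ together with étaleness of the reduced preimage $\widetilde{\overline{Z}}\to\overline{Z}$ forces $\widetilde{\overline{C}}$ to meet $\widetilde{\overline{Z}}$ transversely at $p_0$ and only there. Thus $\bigl(\gamma,\ \ell:\mathbb{A}^1\xrightarrow{\ \sim\ }\widetilde{\overline{C}}\hookrightarrow\widetilde{\overline{X}}\bigr)$ is a fastening datum, so by Proposition~\ref{prop:fasteningdata} the closed orbit through $p_0$ is fastened to the open orbit.

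The one point requiring genuine care — and which I expect to be the main obstacle — is to see that \emph{every} closed orbit of every component of $\widetilde{\overline{X}}$ is reached this way, i.e.\ that the ``branch of $X$ along $Z$ picked out by the Slodowy curve'' can be made to be any prescribed one. Since the construction is $K$-equivariant in $e\in Z$, and the Jacobson--Morozov triples at $e$ with $h\in\mathfrak{k}$, $f\in\mathcal{N}_\theta$ form a single $\stab_K(e)$-orbit by Kostant--Rallis, varying these data sweeps out a $K$-stable union of closed orbits in $\widetilde{\overline{Z}}$; one must argue, using the geometry of $\mathcal{N}_\theta$ and of the normalization map, that it is all of $\widetilde{\overline{Z}}$. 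Everything else is a matter of transporting Corollary~\ref{cor-slice} and Lemma~\ref{lem-slice} faithfully through the projectivization.
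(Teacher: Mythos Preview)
Your approach is essentially the paper's: reduce to $X=U\cup Z$, use the Slodowy slice and the Kazhdan action to produce a $\tau(\gm)$-invariant curve through the image of $e$, lift it to the normalization via Lemma~\ref{lem-slice}/Corollary~\ref{cor-slice}, and check transversality. The only substantive divergence is your final paragraph, where you flag ``hitting every closed orbit of the normalization'' as the main obstacle and propose varying the $\mathfrak{sl}_2$-triple over a $\stab_K(e)$-orbit to sweep out all branches.

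The paper resolves this much more directly. It \emph{begins} by fixing an arbitrary closed orbit $\tilde{\overline{Z}}\subset\tilde{\overline{X}}$ and an arbitrary point $\bar e'\in\tilde{\overline{Z}}$ over $\bar e$, and only then builds the curve. Since $\tilde C$ is defined as the scheme-theoretic preimage $\nu^{-1}(\overline C)$ (via the Cartesian diagram built from Corollary~\ref{cor-slice} and smooth base change along $X\to\overline X$), it automatically contains \emph{every} point of $\nu^{-1}(\bar e)$, in particular the chosen $\bar e'$. One then takes $\tilde C'$ to be $\bar e'$ together with the unique $\gm$-orbit of the smooth curve $\tilde C$ adjacent to it. So a single Slodowy slice at $e$ already sees all branches of $\overline X$ along $\overline Z$ simultaneously; there is no need to vary the Jacobson--Morozov data.

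Note, too, that your assertion that $\tilde{\overline C}\simeq\mathbb A^1$ with a \emph{unique} point $p_0$ over $[e]$ is in tension with the worry you then raise: if that were so, $\nu^{-1}(\bar e)=\{p_0\}$ and there would be only one closed orbit over $\overline Z$, so nothing to check. The paper does not claim irreducibility of $\tilde C$; it uses only smoothness (Corollary~\ref{cor-slice}(ii)) and then selects the local branch through the pre-chosen $\bar e'$.
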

	\begin{proof}
		Thanks to the structure of Definition~\ref{def:fastened}, we may assume that $X = U \cup Z$ where $U$ is a $K$-orbit and $Z \subset \partial U$ is a $K$-orbit of codimension one. In the normalization $\wt{\overline{X}} \to \overline{X}$, we choose a $K$-orbit $\wt{\overline{Z}} \subset \wt{\overline{X}}$ lying over $\overline{Z}$, and we choose a point $\bar{e}' \in \wt{\overline{Z}}$ lying over the point $\bar{e} \in \overline{Z}$ which is the image of $e \in Z$ under projectivization. It suffices to construct a fastening datum $(\gamma, \ell)$ for $(\wt{\overline{U}}, \wt{\overline{Z}})$ at $\bar{e}'$.
		
		Let $C \hra X$ be as defined in Corollary~\ref{cor-slice}. Then we have a commutative diagram in which each vertical map is a normalization map, each square is Cartesian, and $\wt{C}$ is smooth: 
		\begin{cd}
			\wt{C} \ar[r] \ar[d] & \wt{X} \ar[r] \ar[d] & \wt{\overline{X}} \ar[d] \\
			C \ar[r] & X \ar[r] & \overline{X}
		\end{cd}
		Indeed, the first square is Cartesian by Corollary~\ref{cor-slice}. The second square is Cartesian because $X \to \ol{X}$ is smooth, and normalization commutes with smooth base change. Lastly, $\wt{C}$ is smooth because the normalization of a (reduced) curve is smooth. 
		
		The whole diagram is $\gm$-equivariant with respect to the Kazhdan action, and Proposition~\ref{prop:propsofKazhdanaction}(3) implies that the action on every irreducible component of $C$ is nontrivial. In addition, the Kazhdan action on $\overline{X}$ and its normalization coincides with the ordinary action by the one-parameter subgroup $\tau(\gm) \subset K$, because the $t^2$ which appears in the definition of the Kazhdan action has no effect on the projectivization.  
		
		Since $S_e \subset \mf{p}$ is an affine subspace which does not contain $0 \in \mf{p}$, the map $S_e \to \mathbb{P}\mf{p}$ is an open embedding. Hence the map $C \to \overline{X}$ in the diagram is a locally closed embedding. Since the outer square is Cartesian, the map $\wt{C} \to \wt{\overline{X}}$ is also a locally closed embedding. Thus, the previous paragraph implies that $\wt{C} \hra \wt{\overline{X}}$ is a smooth curve which is invariant under $\tau(\gm) \subset K$, and this action is nontrivial on every irreducible component of $\wt{C}$. 
				
		Since $e \in C$, we know that $\bar{e}' \in \wt{C}$. Let $\wt{C}' \hra \wt{C}$ be the locally closed subvariety consisting of $\bar{e}'$ and the unique $\gm$-orbit adjacent to it. This curve is also invariant under $\tau(\gm)$, and the action of $\tau(\gm)$ on it is nontrivial. 
		
		Next, let us show that $\wt{C}'$ is transverse to $\wt{\overline{Z}}$. Let $\wt{Z} \subset \tilde{X}$ be the preimage of $\wt{\overline{Z}} \subset \wt{\overline{X}}$ along the map $\wt{X} \to \wt{\overline{X}}$. It is a reduced $K$-orbit which is one component of the reduced preimage of $Z \subset X$ along the normalization map $\wt{X} \to X$. If $\wt{C}' \hra \wt{\overline{X}}$ is tangent to $\wt{\overline{Z}}$, then $\wt{C}' \hra \tilde{X}$ is tangent to $\wt{Z}$. But this contradicts the transversality statement in Corollary~\ref{cor-slice}(ii). 
		
		Now $(\tau(\gm), \wt{C}')$ is the fastening datum we seek. 
	\end{proof}

\subsection{Unipotent sheaves}

We begin with a conjecture

\begin{conjecture}\label{tfconjecture}
Let $M$ be the Harish-Chandra module of a unipotent representation of $G_{\mathbb{R}}$ and let $U_1,...,U_m$ be the open $K$-orbits on $\mathrm{AV}(M)$. Then

\begin{enumerate}
    \item The classes $[\gr(M)|_{U_i}]$ are irreducible.
    \item There is a good filtration of $M$ such that $\gr(M)$ is a torsion-free, scheme-theoretically supported on $\mathrm{AV}(M)$, and
    \item $\gr(M) = j_*j^*\gr(M)$, where $j: \mathrm{Ch}(M) \subset \mathrm{AV}(M)$ is the inclusion.
\end{enumerate}
\end{conjecture}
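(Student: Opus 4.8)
Since this is stated as a conjecture, what follows is a strategy rather than a complete argument: each of (1)--(3) will be reduced to an input which is either known in many cases or expected from the structure theory of unipotent representations. The unifying idea is to combine Vogan's admissibility theorem (Theorem~\ref{thm:admissibility}), which controls $\gr(M)$ over the open orbits $U_i$, with the fastenedness of $\overline{\mathrm{Ch}}(M)$ (Proposition~\ref{prop:chXfastened}) and the classification of Theorem~\ref{thm-main} together with the gluing results of Section~\ref{sec:wn}, which convert statements about $\gr(M)$ along codimension-one orbits into statements about filtered comodules.

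For (1): admissibility (Theorem~\ref{thm:admissibility}) already determines $\gr(M)|_{U_i}$ up to the action of the component group of $\stab_K(u_i)$, since the identity component must act through the fixed character $\tfrac12\mathrm{tr}$ of Definition~\ref{def:admissibility}. Hence (1) is equivalent to the ``multiplicity one'' statement that the resulting representation of the component group is irreducible. I would attack this either by (a) running through the known constructions of unipotent representations (Barbasch--Vogan \cite{BarbaschVogan1985}, theta lifting, orbit-method constructions), computing $[\gr(M)]$ directly, and checking irreducibility case by case -- a finite verification once the nilpotent orbit is fixed -- or (b) showing that $\mathrm{Ann}_{U(\mathfrak{g})}(M)$ is completely prime (known for many unipotent $M$) and deducing multiplicity one over the open orbit from complete primeness together with the irreducibility of $M$. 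This is the deepest ingredient and is open in general.

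For (2) and (3) the good filtration must be chosen with care. The variety $\mathrm{AV}(M)$ is reduced by definition and, by Vogan's purity theorem \cite{Vogan1991}, equidimensional; for unipotent $M$ one expects (and it is known in many cases) that a suitable good filtration makes $\gr(\mathrm{Ann}_{U(\mathfrak{g})}(M))$ a radical ideal, so that $\gr(M)$ is an honest coherent sheaf on the reduced scheme $\mathrm{AV}(M)$ -- this is the scheme-theoretic support assertion in (2). For torsion-freeness and for the identity $\gr(M) = j_* j^*\gr(M)$, the intended input is positivity: unitarity of the underlying representation -- expected for unipotent representations, and established in many cases -- endows $M$ with a positive-definite invariant Hermitian form, hence $\gr(M)$ with a nondegenerate invariant pairing; such a pairing immediately kills torsion (a torsion section is annihilated by a nonzero function and hence lies in the radical of an $\oh$-valued pairing), and, where $\mathrm{AV}(M)$ is normal, exhibits $\gr(M)$ as a reflexive sheaf, which satisfies Serre's condition $S_2$. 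Granting torsion-freeness and $S_2$ on the equidimensional variety $\mathrm{AV}(M)$, the inclusion $\gr(M) \hookrightarrow j_* j^*\gr(M)$ is an isomorphism because $\mathrm{AV}(M) \setminus \mathrm{Ch}(M)$ has codimension $\ge 2$. Theorem~\ref{thm-main}, applied to $\overline{\mathrm{Ch}}(M)$ (which is fastened by Proposition~\ref{prop:chXfastened}, and recalling that $\gr(M)$ is $\mathbb{C}^{\times}$-equivariant), then describes $j^*\gr(M)$ explicitly, so (3) says precisely that $\gr(M)$ is reconstructed from this codimension-$\le 1$ datum.

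The main obstacle is part (1): the multiplicity-one statement lies outside the reach of the gluing machinery of this paper and must be imported from the structure theory of unipotent representations (or verified orbit by orbit). The input needed for (2) and (3) -- radicality of $\gr$ of the annihilator, and the $S_2$/depth property of $\gr(M)$, the latter requiring both unitarity and control of the singularities of $\mathrm{AV}(M)$ where it fails to be normal -- is more routine in spirit but still genuinely goes beyond the classification results proved here; absent it, one could still establish the weaker statement that $\gr(M)$ and $j_* j^*\gr(M)$ agree after killing torsion and that $\gr(M)$ is scheme-theoretically supported on $\mathrm{AV}(M)$ for a suitable good filtration.
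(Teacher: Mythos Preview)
The paper does not prove this statement: it is explicitly stated as a conjecture, and immediately afterward the authors only remark that part (1) is a conjecture of Vogan from \cite{Vogan1991} and that evidence for (2) and (3) comes from \cite{Vogan1991}, \cite{MasonBrown2018}, and low-rank examples. There is therefore no proof in the paper to compare your proposal against, and you are right to frame what you wrote as a strategy rather than an argument.

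That said, your outline is a reasonable sketch of how one might attack the conjecture, and your assessment that (1) is the deepest ingredient matches the paper's attribution of it to Vogan as an open problem. One technical caution on your route to (2)--(3): the passage from a positive-definite invariant Hermitian form on $M$ to a nondegenerate $\oh$-valued pairing on $\gr(M)$ is not automatic --- one needs the good filtration to interact well with the form so that the induced pairing on the associated graded remains nondegenerate, and arranging this compatibility is itself part of what must be established, not something that comes for free from unitarity. Likewise, the reflexivity/$S_2$ step only directly yields (3) on the normal locus of $\mathrm{AV}(M)$, as you acknowledge; the non-normal locus would need separate treatment. These are genuine obstacles beyond what the machinery of this paper supplies, which is consistent with the statement remaining conjectural.
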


The first claim is a conjecture of Vogan in \cite{Vogan1991}.
Evidence for the second and third claims comes from \cite{Vogan1991}, \cite{MasonBrown2018}, and many low-rank examples (including the two given below).

If we assume Conjecture \ref{tfconjecture}(2), then $\gr(M)$ restricts to a torsion-free, $K \times \gm$-equivariant coherent sheaf on $\mathrm{Ch}(M)$. If $G_{\mathbb{R}}$ is classical, then the component groups $\Com(\stab_K(e_i))$ are abelian. Hence, Conjecture \ref{tfconjecture}(1) combined with Theorem \ref{thm:admissibility} implies that $\gr(M)|_{\mathrm{Ch}(M)}$ is a $K \times \gm$-equivariant line bundle. This line bundle descends along the projection map to a $K$-equivariant line bundle on $\overline{\mathrm{Ch}}(M)$. Since $\overline{\mathrm{Ch}}(M)$ is fastened (by Proposition \ref{prop:chXfastened}), we can describe this line bundle using Proposition \ref{prop:linebundles}.

\begin{ex} \label{ex-su}
Let $G_{\mathbb{R}} = SU(1,1)$. Define $\theta$ on $G= SL_2(\mathbb{C})$ by
$$D = \begin{pmatrix}1 & 0 \\ 0 & -1 \end{pmatrix} \qquad \theta(X) = DXD^{-1}$$
Then
$$K = \left\{ \mathrm{diag}(t,t^{-1}): t \in \mathbb{C}^{\times} \right\} \qquad \mathcal{N}_{\theta} = \left\{ \begin{pmatrix}0 & a \\ b & 0\end{pmatrix}: ab=0 \right\}$$
There are three $K$-orbits on $\mathcal{N}_{\theta}$: two non-zero $K$-orbits
$$\mathcal{O}_+:= \left\{ \begin{pmatrix}0 & a \\ 0 & 0\end{pmatrix}: a \neq 0 \right\} \qquad \mathcal{O}_-:= \left\{ \begin{pmatrix}0 & 0 \\ b & 0\end{pmatrix}: b \neq 0 \right\},$$
each isomorphic (as homogeneous spaces) to $K/\{\pm 1\}$, and the zero orbit $\{0\} \cong K/K$. There are four chains in $\mathcal{N}_{\theta}$:

$$\overline{\mathcal{O}_+} = \mathcal{O}_+ \cup \{0\} \qquad \overline{\mathcal{O}_-} = \mathcal{O}_- \cup \{0\} \qquad \mathcal{N}_{\theta} = \mathcal{O}_+ \cup \mathcal{O}_- \cup \{0\} \qquad \{0\}$$

All four $K$-chains are weakly normal and fastened. By Corollary \ref{cor:linebundles}, a $K$-equivariant line bundle on $\mathcal{N}_{\theta}$ is specified (up to isomorphism) by a tuple $(\rho_+,\rho_-,\lambda)$ consisting of two characters $\rho_+$ and $\rho_-$ of $\{\pm 1\}$ (defining line bundles on $\mathcal{O}_+$ and $\mathcal{O}_-$) and a character $\lambda$ of $\mathbb{C}^{\times}$ (defining a line bundle on $\{0\}$). The gluing condition of that Corollary is the requirement that $\rho_+ = \lambda|_{\{\pm 1\}} = \rho_-$. Similar statements can be formulated for the other three $K$-chains:

\begin{center}
\begin{tabular}{|l|l|} \hline
   Chain  & $(\rho_i,\lambda)$ \\ \hline
   $\{0\}$  & $(n)$ \ $n \in \mathbb{Z}$ \\ \hline
   $\mathcal{O}_+ \cup \{0\}$ & $(\epsilon, n)$ \ $\epsilon \in \mathbb{Z}/2\mathbb{Z}, n \in \mathbb{Z}, n \equiv \epsilon \mod{2}$ \\ \hline
   $\mathcal{O}_- \cup \{0\}$ & $(\epsilon, n)$ \ $\epsilon \in \mathbb{Z}/2\mathbb{Z}, n \in \mathbb{Z}, n \equiv \epsilon \mod{2}$ \\ \hline
   $\mathcal{O}_+ \cup \mathcal{O}_- \cup \{0\}$ & $(\epsilon,\epsilon,n)$ \ $\epsilon \in \mathbb{Z}/2\mathbb{Z}, n \in \mathbb{Z}, n \equiv \epsilon \mod{2}$ \\ \hline
\end{tabular}
\end{center}

There are four unipotent representations of $SU(1,1)$ and they correspond to the line bundles $(\rho_0) = (0), (\rho_+,\lambda_0) = (1,1), (\rho_-,\lambda_0) = (1,-1)$, and $(\rho_+,\rho_-,\lambda_0) = (0,0,0)$. They are, respectively, the trivial representation, the two limit of discrete series representations and the spherical principal series representation of infinitesimal character $0$.

\end{ex}

\begin{ex} \label{ex-sp}
Return to the setting of Example~\ref{ex:nonfastened2}. There are two unipotent representations of $Sp(4,\mathbb{R})$ (of infinitesimal character $(1,0)$) attached to the orbit $\mathcal{O}_{2^+2^+}$. The associated chain is $\mathcal{O}_{2^+2^+} \cup \mathcal{O}_{2^+11}$. It is smooth and fastened. By Corollary \ref{cor:linebundles}, a $K$-equivariant line bundle on $\mathcal{O}_{2^+2^+} \cup \mathcal{O}_{2^+11}$ is specified (up to isomorphism) by a pair $(\rho,\lambda)$ consisting of a character $\rho$ of $\stab_K(e_{2^+2^+}) = O_2(\mathbb{C})$ and a compatible character $\lambda$ of 
$$\stab_K(e_{2^+11}) = \left\{ \begin{pmatrix}\pm 1 & \ast \\ 0 & \ast \end{pmatrix} \right\}$$
A character of $O_2(\mathbb{C})$ is given by $\epsilon \in \mathbb{Z}/2\mathbb{Z}$ (either $\triv$ or $\det$). A character of $\stab_K(e_{2^+11})$ is given by $\mu \otimes m$ for $\mu \in \mathbb{Z}/2\mathbb{Z}$ and $m \in \mathbb{Z}$. The compatibile pairs $(\rho,\lambda)$ are as follows
$$(\rho_{2^+2^+},\lambda_{2^+11}) = (\epsilon, m \otimes \mu) \qquad \epsilon \equiv \mu \mod{2}$$
The unipotent representations correspond to the line bundles given by $(0,2 \otimes 0)$ and $(1, 2 \otimes 1)$. 
\end{ex}

\bibliographystyle{plain}
\bibliography{bibliography.bib}
\end{document}